\title{Upper Triangularity for Unipotent Representations}
\author{Lucas Mason-Brown}
\theoremstyle{plain}
\newtheorem{theorem}{Theorem}[section]
\newtheorem{example}{Example}[section]
\newtheorem{proposition}{Proposition}[section]
\newtheorem{definition}{Definition}[section]
\newtheorem{question}{Question}[section]
\newcommand{\gr}{\operatorname{gr}}
\newcommand{\AV}{\operatorname{AV}}
\newcommand{\ad}{\operatorname{ad}}
\newcommand{\Ad}{\operatorname{Ad}}
\newcommand{\Unip}{\operatorname{Unip}}
\newcommand{\unip}{\operatorname{unip}}
\newcommand{\Deg}{\operatorname{Deg}}
\newcommand{\Aut}{\operatorname{Aut}}
\newcommand{\Prim}{\operatorname{Prim}}
\begin{document}

\begin{abstract}
Suppose $G$ is a real reductive group. The determination of the irreducible unitary representations of $G$ is one of the major unsolved problem in representation theory. There is evidence to suggest that every irreducible unitary representation of $G$ can be constructed through a sequence of well-understood operations from a finite set of building blocks, called the unipotent representations. These representations are `attached' (in a certain mysterious sense) to the nilpotent orbits of $G$ on the dual space of its Lie algebra. Inside this finite set is a still smaller set, consisting of the unipotent representations attached to non-induced nilpotent orbits. In this paper, we prove that in many cases this smaller set generates (through a suitable kind of induction) all unipotent representations. 
\end{abstract}

\maketitle

Suppose $G$ is a real reductive group. There are three powerful techniques for producing irreducible unitary representations of $G$: parabolic induction, cohomological induction, and the formation of complementary series. These techniques produce most, but not all, irreducible unitary representations. If $G = SL_2(\mathbb{R})$, there are exactly three missing representations: the two limit of discrete series representations and the trivial representation. These missing representations are called \emph{unipotent}. They are linked in a mysterious way to the nilpotent orbits of $G$ on the dual space of its Lie algebra. In the case of $SL_2(\mathbb{R})$, there are exactly three such orbits: two principal nilpotent orbits and $\{0\}$. The limit of descrete series representations correspond (in a precise sense) to the principal nilpotent orbits, and the trivial representation corresponds to $\{0\}$. 

The search for a general theory of unipotent representations has a long and colorful history. Highlights include character formulas, due to Barbasch and Vogan (\cite{BarbaschVogan1985}), for unipotent representations of complex reductive groups and a construction, due to Torasso (\cite{Torasso1997}), of the unipotent representations attached to minimal nilpotent orbits. Despite these results (and others), the general theory remains elusive. In this article, we propose a blueprint. 

If $\mathbf{G}$ is a complex reductive algebraic group, and $\mathbf{L} \subset \mathbf{G}$ is a Levi subgroup, there is a correspondence
$$\mathrm{Ind}^{\mathfrak{g}}_{\mathfrak{l}}: \{\text{nilpotent co-adjoint }\mathbf{L}-\text{orbits}\} \to \{\text{nilpotent co-adjoint }\mathbf{G}-\text{orbits}\}$$
called the induction of nilpotent orbits. We propose the following general strategy for understanding the unipotent representations of $G$:

\begin{enumerate}
    \item Understand the unipotents $\Unip(\mathcal{O}_{\mathfrak{g}})$ attached to a non-induced $\mathbf{G}$-orbit $\mathcal{O}_{\mathfrak{g}}$
    \item Find a recipe for building the unipotents $\Unip(\mathcal{O}_{\mathfrak{g}})$ attached to an induced $\mathbf{G}$-orbit $\mathcal{O}_{\mathfrak{g}} = \mathrm{Ind}^{\mathfrak{g}}_{\mathfrak{l}}\mathcal{O}_{\mathfrak{l}}$ from the unipotents $\Unip(\mathcal{O}_{\mathfrak{l}})$ attached to $\mathcal{O}_{\mathfrak{l}}$
\end{enumerate}

In a previous paper (\cite{MasonBrown2018}), we have made some encouraging progress towards $(1)$. In this paper, we will turn our attention towards $(2)$. Our main result in this direction is that the set $\Unip(\mathcal{O}_{\mathfrak{g}})$ of unipotent representations attached to $\mathcal{O}_{\mathfrak{g}} = \mathrm{Ind}^{\mathfrak{g}}_{\mathfrak{l}}\mathcal{O}_{\mathfrak{l}}$ is related in the K-theory of finite-length $(\mathfrak{g},\mathbf{K})$-modules by an upper triangular matrix to a set of standard classes related to $\Unip(\mathcal{O}_{\mathfrak{l}})$. This result is strong evidence in favor of the strategy outlined above: the unipotent representations attached to non-induced orbits are indeed the building blocks from which all others can be formed. There is some hope that the relationship between $\Unip(\mathcal{O}_{\mathfrak{l}})$ and $\Unip(\mathcal{O}_{\mathfrak{g}})$ can be made extremely precise in certain special cases (in \cite{MasonBrown2019}, this is achieved for the principal nilpotent orbit).

In Section \ref{sec:preliminaries}, we will review some preliminary facts and constructions. In Section \ref{sec:unipotentrepresentations}, we will provide an operational definition of unipotent representations. In Section \ref{sec:uppertriangularity} we will prove our main result (Theorem \ref{thm:unipotentsanddegenerates}). This will require brief digressions into the theory of twisted $\mathcal{D}$-modules and translation functors (which we define in a rather general setting). In Section \ref{sec:examples}, we apply our main result to the special case of the $2^n$-orbit of $Sp(2n,\mathbb{R})$ (for several small values of $n$).

\section{preliminaries}\label{sec:preliminaries}

\subsection{Harish-Chandra Isomorphism}

Let $\mathfrak{g}$ be a complex reductive Lie algebra and let $\mathfrak{h}$ be a Cartan subalgebra of $\mathfrak{g}$. Write $\Delta(\mathfrak{g},\mathfrak{h}) \subset \mathfrak{h}^*$ for the corresponding root system, $\Delta(\mathfrak{g},\mathfrak{h})^{\vee} = \{\alpha^{\vee}: \alpha \in \Delta(\mathfrak{g},\mathfrak{h})\}$ for the co-roots, and $W(\mathfrak{g},\mathfrak{h}) \subset GL(\mathfrak{h}^*)$ for the Weyl group. Since $W(\mathfrak{g},\mathfrak{h})$ is independent (up to canonical isomorphism) of $\mathfrak{h}$, we will sometimes write simply $W(\mathfrak{g})$. 

Recall that the center $Z(\mathfrak{g})$ of the universal enveloping algebra $U(\mathfrak{g})$ of $\mathfrak{g}$ is identified by the Harish-Chandra isomorphism
$$\zeta: Z(\mathfrak{g}) \cong \mathbb{C}[\mathfrak{h}^*]^{W(\mathfrak{g})}$$
with the algebra of $W(\mathfrak{g})$-invariant polynomial functions on $\mathfrak{h}^*$. If $\lambda \in \mathfrak{h}^*$, there is a character $\gamma_{\lambda}$ of $Z(\mathfrak{g})$ defined by
$$\gamma_{\lambda}(X) := \zeta(X)(\lambda) \in \mathbb{C}$$
Since $\zeta$ is an isomorphism, every character of $Z(\mathfrak{g})$ arises in this fashion and $\gamma_{\lambda} = \gamma_{\mu}$ if and only if $\lambda \in W(\mathfrak{g})\mu$. 

\subsection{$(\mathfrak{g},\mathbf{K})$-Modules}

Let $\mathfrak{g}$ be a (finite-dimensional) complex Lie algebra and let $\mathbf{K}$ be a complex algebraic group. We say that $(\mathfrak{g},\mathbf{K})$ is a \emph{pair} if
\begin{enumerate}
    \item The Lie algebra $\mathfrak{k}$ of $\mathbf{K}$ is a subalgebra of $\mathfrak{g}$
    \item $\mathbf{K}$ acts on $\mathfrak{g}$ by Lie algebra automorphisms $\Ad(k) \in \mathrm{Aut}(\mathfrak{g})$ extending the adjoint action of $\mathbf{K}$
    \item The Lie algebra of $\Ad(\mathbf{K})$ is the subalgebra $\ad(\mathfrak{k}) \subset \ad(\mathfrak{g})$
\end{enumerate}

\begin{definition}\label{def:gkmodule}
Let $(\mathfrak{g},\mathbf{K})$ be a pair. A $(\mathfrak{g}, K)$-module is a complex vector space $V$ with a Lie algebra action of $\mathfrak{g}$ and an algebraic group action of $\mathbf{K}$ such that
\begin{enumerate}
\item\label{cond:ugk1} The action map
\begin{equation}
\mathfrak{g} \otimes_{\mathbb{C}} V \to V
\end{equation}
is $\mathbf{K}$-equivariant, and
\item\label{cond:ugk2} The $\mathfrak{g}$-action, restricted to the subspace $\mathfrak{k} \subset \mathfrak{g}$, coincides with the differentiated action of $\mathbf{K}$.
\end{enumerate}
A morphism of $(\mathfrak{g},\mathbf{K})$-modules is a linear map which commutes with the actions of $\mathfrak{g}$ and $\mathbf{K}$. We will write $M(\mathfrak{g},\mathbf{K})$ the abelian category of $(\mathfrak{g},\mathbf{K})$-modules. An object $M \in M(\mathfrak{g},\mathbf{K})$ has finite-length if it has a finite composition series
$$0 = M_0 \subset M_1 \subset ... \subset M_n = M$$
with irreducible quotients. We will write $M^{\mathrm{fl}}(\mathfrak{g},\mathbf{K})$ for the full subcategory of finite-length $(\mathfrak{g},\mathbf{K})$-modules and $KM^{\mathrm{fl}}(\mathfrak{g},\mathbf{K})$ for its Grothendieck group.
\end{definition}

\subsection{Dixmier Algebras}

Let $\mathbf{G}$ be a complex algebraic group and let $\mathfrak{g}$ be its Lie algebra.

\begin{definition}\label{def:dixmier}
A Dixmier algebra for $\mathbf{G}$ is a triple $(B,\Ad, \phi)$ consisting of a complex algebra $B$, an algebraic group action $\Ad: \mathbf{G} \to \Aut(B)$, and an algebra homomorphism
$$\phi: U(\mathfrak{g}) \to B$$
satisfying three properties:

\begin{enumerate}
    \item $\phi$ commutes with the adjoint actions of $\mathbf{G}$
    \item The differential of the action $\Ad$ of $\mathbf{G}$ on $B$ (denoted $\ad$) coincides with the difference of the left and right $\mathfrak{g}$-actions defined by $\phi$:
    $$\ad(X)b = \phi(X)b - b\phi(X) \qquad X \in \mathfrak{g}, b \in B$$
    \item\label{cond:dixmier3} $B$ is a finite-length $U(\mathfrak{g})$-bimodule
\end{enumerate}
Almost always, $\Ad$ and $\phi$ will be clear from the context. When this is the case, we will omit them from our notation. 
\end{definition}

The prototypical example of a Dixmier algebra is the quotient of $U(\mathfrak{g})$ by a primitive ideal. More generally,

\begin{proposition}[\cite{Jantzen1983}, Kapitel 6]\label{prop:dixmier1}
Let $I \subset U(\mathfrak{g})$ be a two-sided ideal such that $I \cap Z(\mathfrak{g})$ has finite-codimension in $Z(\mathfrak{g})$. Then $A=U(\mathfrak{g})/I$ is a Dixmier algebra.
\end{proposition}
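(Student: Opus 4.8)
\emph{Proof proposal.} The plan is to dispose of conditions (1) and (2) as formalities and to locate all the content in condition (3). Provided $I$ is stable under $\Ad(\mathbf{G})$ --- automatic when $\mathbf{G}$ is connected, since $\Ad(\mathbf{G}^{\circ})$ is generated by the automorphisms $e^{\ad X}$ ($X \in \mathfrak{g}$), each of which preserves two-sided ideals --- the adjoint action of $\mathbf{G}$ on $U(\mathfrak{g})$ descends to $A = U(\mathfrak{g})/I$. Taking $\phi$ to be the quotient map $U(\mathfrak{g}) \twoheadrightarrow A$, condition (1) is then immediate. For condition (2), the differential of $\Ad$ on $U(\mathfrak{g})$ is the derivation $u \mapsto Xu - uX$ extending $\ad(X)|_{\mathfrak{g}}$; pushing it to $A$ gives $\ad(X)\bar u = \overline{Xu - uX} = \phi(X)\phi(u) - \phi(u)\phi(X)$, which is exactly the asserted identity.

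For condition (3) --- that $A$ has finite length as a $U(\mathfrak{g})$-bimodule --- I would first record two elementary facts. First, $A$ is $\ad\mathfrak{g}$-locally finite: in the PBW filtration $U(\mathfrak{g}) = \bigcup_n U_n(\mathfrak{g})$ each $U_n(\mathfrak{g})$ is finite-dimensional and $\Ad(\mathbf{G})$-stable, hence $\ad\mathfrak{g}$-stable, so its image in $A$ is a finite-dimensional $\ad\mathfrak{g}$-stable subspace, and $A$ is the union of these. Second, $A$ is cyclic as a bimodule (generated by $\bar 1$), and --- since every element of $Z(\mathfrak{g})$ is central in $U(\mathfrak{g})$ --- the left and right actions of $Z(\mathfrak{g})$ on $A$ coincide; this common action annihilates $I \cap Z(\mathfrak{g})$, because $z \in I \cap Z(\mathfrak{g})$ gives $z\bar u = \overline{zu} = 0$. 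Hence $Z(\mathfrak{g})$ acts on $A$ through the finite-dimensional algebra $Z(\mathfrak{g})/(I \cap Z(\mathfrak{g}))$, which is a product of finitely many local Artinian rings; accordingly $A = \bigoplus_{i=1}^{r} A_{\chi_i}$ is the direct sum of the generalized eigenspaces for the finitely many characters $\chi_i$ of $Z(\mathfrak{g})$ containing $I \cap Z(\mathfrak{g})$, and each $A_{\chi_i}$ is a sub-bimodule (it is cut out by a central idempotent of the algebra $A$ lifted from $Z(\mathfrak{g})$), still cyclic and $\ad\mathfrak{g}$-locally finite.

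It then suffices to prove that each $A_{\chi_i}$ has finite length; that is, to prove that a finitely generated, $\ad\mathfrak{g}$-locally finite $U(\mathfrak{g})$-bimodule on which $Z(\mathfrak{g})$ acts through a single generalized character has finite length. Equivalently, such a bimodule is a finitely generated Harish-Chandra module for the pair $(\mathfrak{g}\oplus\mathfrak{g},\,\mathbf{G}_{\Delta})$ (with $\mathbf{G}_{\Delta}$ the diagonally embedded copy of $\mathbf{G}$, acting by $\Ad$) equipped with a generalized central character, and the assertion is the finiteness theorem for Harish-Chandra bimodules, \cite[Kapitel 6]{Jantzen1983}. I expect this finiteness theorem to be the one genuinely substantial step --- everything above it is bookkeeping. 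Its proof runs by first observing that such a bimodule is automatically finitely generated over $U(\mathfrak{g})$ acting on a single side (trivial in our case, as $A$ is cyclic over $U(\mathfrak{g})$), and then bounding the length by means of Kostant's structure theory of $U(\mathfrak{g})$ as an $\ad\mathfrak{g}$-module --- each $\ad\mathfrak{g}$-isotypic component is finite-dimensional --- together with the translation principle, which relates such bimodules to category $\mathcal{O}$, whose objects have finite length. When $\mathbf{G}$ is reductive, as in the applications of this paper, this reduces to the classical finiteness theorem for finitely generated $(\mathfrak{g}',\mathbf{K}')$-modules ($\mathbf{K}'$ reductive) carrying a fixed generalized central character, applied to $\mathfrak{g}' = \mathfrak{g}\oplus\mathfrak{g}$ and $\mathbf{K}' = \mathbf{G}_{\Delta}$.
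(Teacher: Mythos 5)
The paper itself offers no proof here, only the citation to Jantzen, so there is no in-text argument to compare against; your sketch is a correct reconstruction of the standard one. You correctly reduce conditions (1) and (2) to formalities (while rightly flagging that $\Ad(\mathbf{G})$-stability of $I$, needed for the $\mathbf{G}$-action on $A$ to exist at all, is automatic only for connected $\mathbf{G}$ and should be taken as an implicit hypothesis otherwise), and you correctly isolate condition (3) as the substantive point, reducing it via the $Z(\mathfrak{g})$-Peirce decomposition to the finiteness theorem for cyclic, $\ad\mathfrak{g}$-locally finite Harish-Chandra bimodules with a single generalized infinitesimal character --- which is exactly the content of Jantzen, Kapitel 6.
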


Now let $B$ be a Dixmier algebra and let $(F,\rho)$ be a finite-dimensional representation of $\mathbf{G}$. We will define the structure of a Dixmier algebra on $B \otimes \mathrm{End}(F)$. 

The $\mathbf{G}$-action on $F$ lifts to a Lie algebra map $\rho: \mathfrak{g} \to \mathrm{End}(F)$, which lifts to an algebra homomorphism $\rho: U(\mathfrak{g}) \to \mathrm{End}(F)$. Let
$$\Delta: U(\mathfrak{g}) \to U(\mathfrak{g}) \otimes U(\mathfrak{g}) \qquad \Delta(X) = X \otimes 1 + 1 \otimes X$$
be the co-multiplication on $U(\mathfrak{g})$. $\Delta$ is an algebra homomorphism. Consider the composite
$$(\varphi \otimes \rho) \circ \Delta: U(\mathfrak{g}) \to B \otimes \mathrm{End}(F)$$
This map will play the role of $\phi$ for the algebra $B \otimes \mathrm{End}(F)$. The action of $\mathbf{G}$ on $B \otimes \mathrm{End}(F)$ is the diagonal one. 

\begin{proposition}[\cite{Jantzen1983},Kapitel 6]\label{prop:dixmier2}
Let $B$ be a Dixmier algebra and let $F$ be a finite-dimensional representation of $\mathbf{G}$. Then $B \otimes \mathrm{End}(F)$ is a Dixmier algebra (with $\phi$ and $\Ad$ defined as above).
\end{proposition}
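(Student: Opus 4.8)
The triple to be checked is $\bigl(B\otimes\mathrm{End}(F),\ \Ad\otimes\Ad,\ (\phi\otimes\rho)\circ\Delta\bigr)$, and the plan is to verify the three axioms of Definition~\ref{def:dixmier} in turn. Write $\Phi=(\phi\otimes\rho)\circ\Delta$, so that $\Phi(X)=\phi(X)\otimes 1+1\otimes\rho(X)$ for $X\in\mathfrak g$.

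Axioms (1) and (2) are formal, and I would dispatch them first. For (1): the coproduct $\Delta$ is a morphism of $\mathbf G$-modules for the diagonal action on the target, $\phi$ is $\mathbf G$-equivariant by hypothesis, and $\rho\colon U(\mathfrak g)\to\mathrm{End}(F)$ intertwines $\Ad$ with conjugation by $\rho(\mathbf G)$; composing, $\Phi$ is $\mathbf G$-equivariant for the diagonal action on $B\otimes\mathrm{End}(F)$. For (2): the differential of the diagonal action is $\ad_B(X)\otimes 1+1\otimes\ad_{\mathrm{End}(F)}(X)$, and substituting $\ad_B(X)b=\phi(X)b-b\phi(X)$ (the hypothesis on $B$) together with $\ad_{\mathrm{End}(F)}(X)T=\rho(X)T-T\rho(X)$ and regrouping, on a spanning set of simple tensors $b\otimes T$, gives exactly $\Phi(X)(b\otimes T)-(b\otimes T)\Phi(X)$ --- a one-line Leibniz computation.

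The real content is axiom (3): that $B\otimes\mathrm{End}(F)$, equipped with the bimodule structure coming from $\Phi$, has finite length. Here I would reformulate in terms of $\mathfrak g\oplus\mathfrak g$: a $U(\mathfrak g)$-bimodule is the same as a $U(\mathfrak g\oplus\mathfrak g)$-module, and under this dictionary the $\Phi$-bimodule $B\otimes\mathrm{End}(F)$ is exactly the tensor product --- formed via the coproduct of $U(\mathfrak g\oplus\mathfrak g)$ --- of the bimodule $B$ with the bimodule $\mathrm{End}(F)$, the latter finite-dimensional (as a $\mathfrak g\oplus\mathfrak g$-module it is $F\boxtimes F^{*}$). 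So the assertion becomes: tensoring the finite-length module $B$ by a finite-dimensional module stays finite length. I would establish this in four steps. (i) Finite length makes $B$ Noetherian, hence finitely generated over $U(\mathfrak g\oplus\mathfrak g)$; moreover each simple bimodule subquotient of $B$ is countable-dimensional over $\mathbb C$, so by Schur's lemma in Dixmier's form $Z(\mathfrak g)$ acts on it by a scalar from the left and from the right, whence $B$ is annihilated on each side by a finite-codimension ideal of $Z(\mathfrak g)$. (ii) $B\otimes\mathrm{End}(F)$ remains finitely generated as a bimodule: realize $B$ as a quotient of a finite free $U(\mathfrak g\oplus\mathfrak g)$-module, and note that by the tensor identity $U(\mathfrak g\oplus\mathfrak g)\otimes_{\mathbb C}\mathrm{End}(F)$ with the diagonal action is free of rank $\dim\mathrm{End}(F)$. (iii) Invoke the Kostant-type lemma behind the theory of translation functors: if $Z(\mathfrak g)$ acts on a module $M$ by finitely many generalized characters and $E$ is finite-dimensional, then $Z(\mathfrak g)$ acts on $M\otimes E$ by finitely many generalized characters (the point being that the image of $Z(\mathfrak g)$ under the coproduct, though not central, acts through a finite-dimensional commutative subalgebra); applied to the left and right $\mathfrak g$-actions separately, this shows $B\otimes\mathrm{End}(F)$ admits finitely many generalized infinitesimal characters on each side. (iv) Finally, a $U(\mathfrak g)$-bimodule that is finitely generated as a bimodule, $\mathbf G$-locally finite under the adjoint action, and carries finitely many generalized infinitesimal characters on each side has finite length (Harish--Chandra's finiteness theorem, applied to $\mathfrak g\oplus\mathfrak g$ and the diagonally embedded $\mathbf G$); $B\otimes\mathrm{End}(F)$ has these three properties by (ii), by algebraicity of the (diagonal) adjoint action, and by (iii). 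This yields (3).

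The only genuine obstacle is step (iii), the Kostant lemma describing how $Z(\mathfrak g)$ acts on a tensor product with a finite-dimensional module; steps (i), (ii), (iv) are bookkeeping with Noetherianity, Schur's lemma, the tensor identity, and the standard finiteness theorem. That lemma, and in fact the whole of (3), is carried out in \cite{Jantzen1983}, Kapitel~6, so in the final write-up I would either reproduce the short argument or simply cite it.
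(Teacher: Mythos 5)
The paper gives no proof of this proposition, citing only Jantzen, Kapitel~6, so there is no in-paper argument to compare against. Your proof is correct and is the standard one: axioms (1) and (2) follow by the formal equivariance and Leibniz computations you describe, and axiom (3) follows by viewing $U(\mathfrak{g})$-bimodules as $U(\mathfrak{g}\oplus\mathfrak{g})$-modules and combining finite generation (via the tensor identity), Kostant's lemma on how $Z(\mathfrak{g})$ acts on a tensor product with a finite-dimensional module, and Harish-Chandra's finiteness theorem.
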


In analogy with Definition \ref{def:gkmodule}, we define

\begin{definition}\label{def:dixmiermodule}
Let $\mathbf{K} \subset \mathbf{G}$ be an algebraic subgroup, and let $B$ be a Dixmier algebra for $\mathbf{G}$. A $(B,\mathbf{K})$-module is a (left) $B$-module $V$ with an algebraic action of $\mathbf{K}$ such that

\begin{enumerate}
    \item The action map
    $$B \otimes V \to V$$
    Is $\mathbf{K}$-equivariant, and
    \item The $\mathfrak{k}$-action on $V$ coming from the $B$-action on $V$ and the homomorphism $\phi: \mathfrak{k} \subset U(\mathfrak{g}) \to B$ coincides with the differentiated action of $\mathbf{K}$.
\end{enumerate}

A morphism of $(B,\mathbf{K})$-modules is a $\mathbf{K}$-equivariant $B$-module homomorphism. We will write $M(B,\mathbf{K})$ for the abelian category of $(B,\mathbf{K})$-modules and $M^{\mathrm{fl}}(B,\mathbf{K})$ for the full subcategory of finite-length modules.
\end{definition}





\subsection{Parabolic Induction}\label{sec:parabolicinduction}

Let $G$ be a real reductive group (for us this will mean the real points of a connected reductive algebraic group defined over $\mathbb{R}$). Fix a maximal compact subgroup $K \subset G$ and write $\theta: G \to G$ for the associated Cartan involution. Write $\mathbf{G}$ and $\mathbf{K}$ for the complexifications (of $G$ and of $K$) and $\mathfrak{g}$ and $\mathfrak{k}$ for their Lie algebras. Then $(\mathfrak{g},\mathbf{K})$ is a pair. If $\mathfrak{q} \subset \mathfrak{g}$ is a parabolic subalgebra and $\mathbf{Q} \subset \mathbf{G}$ is the corresponding (connected) subgroup of $\mathbf{G}$, then $(\mathfrak{q},\mathbf{Q} \cap \mathbf{K}) \subset (\mathfrak{g},\mathbf{K})$ is a subpair. We will define a left-exact functor
\begin{equation}\label{eqn:zuckermanfunctor}
    \mathbf{I}^{(\mathfrak{g},\mathbf{K})}_{(\mathfrak{q},\mathbf{Q}\cap \mathbf{K})}: M(\mathfrak{h},\mathbf{L}) \to M(\mathfrak{g},\mathbf{K})
\end{equation}
This functor reduces to the usual functor of parabolic induction (when $\mathfrak{q}$ is germane), cohomological induction (when $\mathfrak{q}$ is $\theta$-stable), and real parabolic induction (when $\mathfrak{q}$ is the complexification of a real parabolic subalgebra).

Roughly speaking, if $W \in M(\mathfrak{l},\mathbf{Q} \cap \mathbf{K})$, $\mathbf{I}^{(\mathfrak{g},\mathbf{K})}_{(\mathfrak{l},\mathbf{Q}\cap \mathbf{K})}W$ is the $(\mathfrak{g},\mathbf{K})$-module
$$\mathbf{K}-\text{finite vectors in } \mathrm{Hom}_{\mathfrak{q}}(U(\mathfrak{g}),W \otimes \det(\mathfrak{u}))$$
where $\det(\mathfrak{u})$ is the top exterior power of $\mathfrak{u}$, the nilradical of $\mathfrak{q}$. This definition is not quite correct (or meaningful, strictly speaking) if $\mathbf{K}$ is disconnected. We review the correct definition below.

Let $\mathbf{K}^0$ denote the identity component of $\mathbf{K}$, and let $\mathbf{K}^1 = \mathbf{L}\mathbf{K}^0$. Then $\mathbf{I}^{(\mathfrak{g},\mathbf{K})}_{(\mathfrak{q},\mathbf{Q}\cap\mathbf{K})}W$ is defined in stages

\begin{enumerate}
    \item Let 
    $$W' := W \otimes \det(\mathfrak{u})$$
    \item Form
    $$\mathrm{Hom}_{\mathfrak{q}}(U(\mathfrak{g}),W')$$
    This vector space has the structure of a $(\mathfrak{g}, \mathbf{Q} \cap \mathbf{K})$-module
    \item Take $\mathbf{K}^0$-finite vectors
    $$\Gamma^0\mathrm{Hom}_{\mathfrak{q}}(U(\mathfrak{g}),W') := \mathrm{Hom}_{\mathfrak{q}}(U(\mathfrak{g}),W')_{\mathbf{K}^0}$$
    This vector space has the structure of a $\mathfrak{g}$-module with algebraic actions of $\mathbf{Q} \cap \mathbf{K}$ and $\mathbf{K}^0$. These group actions are both compatible with $\mathfrak{g}$, but not necessarily with eachother. They restrict to two (often distinct) actions of $\mathbf{Q} \cap \mathbf{K}^0$.
    \item Form the subspace of $\Gamma^0\mathrm{Hom}_{\mathfrak{q}}(U(\mathfrak{g}),W')$ on which both $\mathbf{Q} \cap \mathbf{K}^0$-actions coincide:
    $$\Gamma^1\mathrm{Hom}_{\mathfrak{q}}(U(\mathfrak{g}),W') := \{v \in \Gamma^0\mathrm{Hom}_{\mathfrak{q}}(U(\mathfrak{g}),W'): g \cdot_1 v = g \cdot_2 v \quad \forall g \in \mathbf{Q} \cap \mathbf{K}^0\}$$
    This is a $\mathfrak{g}$-module with compatible algebraic actions of $\mathbf{K}^0$ and $\mathbf{Q} \cap \mathbf{K}$, and hence of $\mathbf{K}^1 = (\mathbf{Q} \cap \mathbf{K})\mathbf{K}^0$
    \item Perform a finite induction
    $$\Gamma \mathrm{Hom}_{\mathfrak{q}}(U(\mathfrak{g}),W'):= \mathrm{Ind}^{\mathbf{K}}_{\mathbf{K}^1} \Gamma^1\mathrm{Hom}_{\mathfrak{q}}(U(\mathfrak{g}),W')$$
    This vector space has the structure of a $(\mathfrak{g},\mathbf{K})$-module.
\end{enumerate}

The assignment
$$W \mapsto \Gamma \mathrm{Hom}_{\mathfrak{q}}(U(\mathfrak{g}),W')$$
defines a left-exact functor, since each of its consituents (described in $(1)-(5)$ above) are exact or left-exact. Since the category $M(\mathfrak{g},\mathbf{K})$ has enough injectives, we can define the right derived functors
$$R^i\mathbf{I}^{(\mathfrak{g},\mathbf{K})}_{(\mathfrak{q},\mathbf{Q}\cap \mathbf{K})}: M(\mathfrak{q},\mathbf{Q}\cap \mathbf{K}) \to M(\mathfrak{g},\mathbf{K})$$
If we fix a Levi decomposition $\mathfrak{q} = \mathfrak{l} \oplus \mathfrak{u}$ (and hence a Levi decomposition $\mathbf{Q} = \mathbf{L} \mathbf{U}$), then $(\mathfrak{l},\frac{\mathbf{Q} \cap \mathbf{K}}{\mathbf{U} \cap \mathbf{K}})$ is a pair (with $\mathbf{Q} \cap \mathbf{K}$ acting on $\mathfrak{l}$ via the isomorphism $\mathfrak{l} \cong \mathfrak{q}/\mathfrak{u}$), and there is a surjective morphism of pairs
$$(\mathfrak{q}, \mathbf{Q} \cap \mathbf{K}) \twoheadrightarrow (\mathfrak{l},\frac{\mathbf{Q} \cap \mathbf{K}}{\mathbf{U} \cap \mathbf{K}})$$
Pulling back along this morphism defines a fully faithful embedding
$$M(\mathfrak{l},\frac{\mathbf{Q} \cap \mathbf{K}}{\mathbf{U} \cap \mathbf{K}}) \subset M(\mathfrak{q}, \mathbf{Q} \cap \mathbf{K})$$
The restriction of $\mathbf{I}^{(\mathfrak{g},\mathbf{K})}_{(\mathfrak{q},\mathbf{Q}\cap \mathbf{K})}$ to this subcategory has many favorable properties, which we summarize below.

\begin{theorem}\label{thm:propertiesofI}
Let $W \in M(\mathfrak{l}, \frac{\mathbf{Q}\cap \mathbf{K}}{\mathbf{U} \cap \mathbf{K}})$ and let $\mathfrak{h} \subset \mathfrak{l}$ be a Cartan subalgebra. Then
\begin{enumerate}
    \item If $W$ has finite-length, then $\mathbf{I}^{(\mathfrak{g},\mathbf{K})}_{(\mathfrak{q},\mathbf{Q}\cap \mathbf{K})}W$ has finite-length, for every $i \geq 0$
    \item If $W$ has infinitesimal character $\gamma_{\lambda}$, then $\mathbf{I}^{(\mathfrak{g},\mathbf{K})}_{(\mathfrak{q},\mathbf{Q}\cap \mathbf{K})}W$ has infinitesimal character $\gamma_{\lambda+\rho(\mathfrak{u})}$, for every $i \geq 0$
    \item There is an integer $s$ (depending only on $\mathfrak{q}$, $\mathbf{K}$, and $\mathfrak{g}$) such that $\mathbf{I}^{(\mathfrak{g},\mathbf{K})}_{(\mathfrak{q},\mathbf{Q}\cap \mathbf{K})}W = 0$ for every $i > s$
\end{enumerate}
\end{theorem}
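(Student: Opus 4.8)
The plan is to dispatch (2) and (3) --- which are essentially formal --- and then to reduce (1), the substantive point, to a known statement about cohomological induction for connected groups; the latter reduction is where I expect the real work to lie.

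\emph{Part (2).} Write $W' = W \otimes \det(\mathfrak{u})$. Since $\det(\mathfrak{u}) = \wedge^{\mathrm{top}}\mathfrak{u}$ is the one-dimensional $\mathfrak{l}$-module of weight $2\rho(\mathfrak{u}) = \mathrm{tr}(\ad|_{\mathfrak{u}})$ (a character, being trivial on $[\mathfrak{l},\mathfrak{l}]$), $W'$ has $\mathfrak{l}$-infinitesimal character $\gamma_{\lambda + 2\rho(\mathfrak{u})}$. Because $W$, hence $W'$, is pulled back from $\mathfrak{l}$ --- so that $\mathfrak{u}$ acts by zero --- the action of $Z(\mathfrak{g})$ on $\mathrm{Hom}_{\mathfrak{q}}(U(\mathfrak{g}), W')$ coming from the right regular representation of $U(\mathfrak{g})$ factors through the relative Harish-Chandra homomorphism $Z(\mathfrak{g}) \to Z(\mathfrak{l})$, i.e. projection along the $\mathfrak{l}$-stable decomposition $U(\mathfrak{g}) = U(\mathfrak{l}) \oplus \big(\mathfrak{u}^{-} U(\mathfrak{g}) + U(\mathfrak{g})\mathfrak{u}\big)$; under the Harish-Chandra isomorphisms this map becomes the inclusion $\mathbb{C}[\mathfrak{h}^*]^{W(\mathfrak{g})} \hookrightarrow \mathbb{C}[\mathfrak{h}^*]^{W(\mathfrak{l})}$ composed with translation by $-\rho(\mathfrak{u})$, so $\mathrm{Hom}_{\mathfrak{q}}(U(\mathfrak{g}), W')$ has $\mathfrak{g}$-infinitesimal character $\gamma_{\lambda + 2\rho(\mathfrak{u}) - \rho(\mathfrak{u})} = \gamma_{\lambda + \rho(\mathfrak{u})}$. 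The remaining operations in the construction --- passing to $\mathbf{K}^0$-finite vectors, passing to the subspace $\Gamma^1$, and the finite induction $\mathrm{Ind}^{\mathbf{K}}_{\mathbf{K}^1}$ --- are built from actions of subgroups of $\mathbf{K} \subset \mathbf{G}$, all of which commute with $Z(\mathfrak{g})$ (note $\Ad(\mathbf{K})$ fixes $Z(\mathfrak{g})$ pointwise); hence they, and all their derived functors, preserve the $Z(\mathfrak{g})$-eigencharacter, which proves (2) in every degree.

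\emph{Part (3).} The functor $\mathbf{I}^{(\mathfrak{g},\mathbf{K})}_{(\mathfrak{q},\mathbf{Q}\cap\mathbf{K})}$ is a composite of: the auto-equivalence $W \mapsto W \otimes \det(\mathfrak{u})$; the functor $\mathrm{Hom}_{\mathfrak{q}}(U(\mathfrak{g}),-) = \mathrm{Hom}_{U(\mathfrak{q})}(U(\mathfrak{g}),-)$, which is exact because $U(\mathfrak{g})$ is free as a right $U(\mathfrak{q})$-module by the Poincar\'e--Birkhoff--Witt theorem, and which preserves injectives since it is right adjoint to the exact functor $U(\mathfrak{g}) \otimes_{U(\mathfrak{q})} -$; the left-exact Zuckerman-type functors $\Gamma^0$ ($\mathbf{K}^0$-finite vectors) and $\Gamma^1$; and the exact finite induction $\mathrm{Ind}^{\mathbf{K}}_{\mathbf{K}^1}$. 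The only sources of higher cohomology are therefore $\Gamma^0$ and $\Gamma^1$, and a Grothendieck spectral sequence argument bounds the cohomological amplitude of $\mathbf{I}^{(\mathfrak{g},\mathbf{K})}_{(\mathfrak{q},\mathbf{Q}\cap\mathbf{K})}$ by the sum of their (finite) cohomological dimensions; that sum may be taken as $s$ and depends only on $\mathfrak{q}$, $\mathbf{K}$, $\mathfrak{g}$ --- for instance the contribution of $\Gamma^0$ is at most $\dim_{\mathbb{C}}\mathfrak{k} - \dim_{\mathbb{C}}(\mathfrak{q} \cap \mathfrak{k})$.

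\emph{Part (1).} By (2) each $R^i\mathbf{I}^{(\mathfrak{g},\mathbf{K})}_{(\mathfrak{q},\mathbf{Q}\cap\mathbf{K})}W$ has a single infinitesimal character, hence is $Z(\mathfrak{g})$-finite. Since there are only finitely many irreducible $(\mathfrak{g},\mathbf{K})$-modules with a given infinitesimal character, a $(\mathfrak{g},\mathbf{K})$-module that is both $Z(\mathfrak{g})$-finite and admissible automatically has finite length: each irreducible constituent $L$ occurs at most $\dim \mathrm{Hom}_{\mathbf{K}}(\mu, R^i\mathbf{I}^{(\mathfrak{g},\mathbf{K})}_{(\mathfrak{q},\mathbf{Q}\cap\mathbf{K})}W) < \infty$ times, $\mu$ being any $\mathbf{K}$-type of $L$. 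So the theorem reduces to the admissibility of $R^i\mathbf{I}^{(\mathfrak{g},\mathbf{K})}_{(\mathfrak{q},\mathbf{Q}\cap\mathbf{K})}W$ --- using that a finite-length $(\mathfrak{l},\tfrac{\mathbf{Q}\cap\mathbf{K}}{\mathbf{U}\cap\mathbf{K}})$-module is automatically admissible. This admissibility statement is the main obstacle. Via the factorization of Part (3), admissibility over $\mathbf{K}$ follows from admissibility over $\mathbf{K}^0$ of $\Gamma^1$ applied to $R^i\Gamma^0\big(\mathrm{Hom}_{\mathfrak{q}}(U(\mathfrak{g}), W')\big)$, together with finiteness of $\mathbf{K}/\mathbf{K}^1$; and the latter is precisely the assertion that the derived Zuckerman functors send modules admissible over $\mathbf{Q}\cap\mathbf{K}^0$ to modules admissible over $\mathbf{K}^0$, which is the central finiteness theorem in the Knapp--Vogan treatment of cohomological induction. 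I would invoke that result in the connected case and then check that the present, slightly more general setup (arbitrary parabolic $\mathfrak{q}$, possibly disconnected $\mathbf{K}$) is covered once the above reductions are made; combining with $Z(\mathfrak{g})$-finiteness from (2) then gives (1).
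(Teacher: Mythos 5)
The paper gives no direct proof of Theorem \ref{thm:propertiesofI}: it simply cites \cite{Vogan1981}, Section 6.3, and remarks that Vogan's arguments, stated there for germane $\mathfrak{q}$, adapt to the present setting. Your sketch reconstructs essentially those arguments --- the $\rho(\mathfrak{u})$-shift of infinitesimal character, a cohomological-dimension bound coming from factoring $\mathbf{I}^{(\mathfrak{g},\mathbf{K})}_{(\mathfrak{q},\mathbf{Q}\cap \mathbf{K})}$ into exact pieces and left-exact Zuckerman-type pieces, and the reduction of finite length to $Z(\mathfrak{g})$-finiteness together with admissibility of derived Zuckerman functors --- so the approach agrees with the cited one. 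Two small inaccuracies to fix: $\mathrm{Hom}_{U(\mathfrak{q})}(U(\mathfrak{g}),-)$ is right adjoint to the (exact) forgetful/restriction functor $M(\mathfrak{g},\mathbf{Q}\cap\mathbf{K})\to M(\mathfrak{q},\mathbf{Q}\cap\mathbf{K})$, not to $U(\mathfrak{g})\otimes_{U(\mathfrak{q})}-$ (which points in the same direction as $\mathrm{Hom}_{U(\mathfrak{q})}(U(\mathfrak{g}),-)$ and so cannot be a left adjoint of it); and its exactness uses that $U(\mathfrak{g})$ is free as a \emph{left} $U(\mathfrak{q})$-module by PBW, not as a right module. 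Neither slip affects the conclusion, and the argument you invoke from Knapp--Vogan is the same one Vogan proves in the cited section.
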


For proofs, we direct the reader to \cite{Vogan1981}, Section 6.3. Vogan proves these statements under the assumption that $\mathfrak{q}$ is germane, but his arguments can be (slightly and easily) modified to accomodate our more general setting. 

In light of Theorem \ref{thm:propertiesofI}, we can define a group homomorphism
$$I(\mathfrak{l},\mathfrak{q},\cdot): KM^{\mathrm{fl}}(\mathfrak{l}, \frac{\mathbf{Q}\cap \mathbf{K}}{\mathbf{U} \cap \mathbf{K}}) \to KM^{\mathrm{fl}}(\mathfrak{g},\mathbf{K}) \qquad I(\mathfrak{l},\mathfrak{q},[W]) = \sum_i (-1)^i [R^i \mathbf{I}^{(\mathfrak{g},\mathbf{K})}_{(\mathfrak{q},\mathbf{Q}\cap \mathbf{K})}W]$$

This homomorphism will be our primary tool for constructing unipotent representations.

\subsection{Associated Varieties}

Let $\mathbf{G}$ be a complex reductive algebraic group and let $\mathfrak{g}$ be its Lie algebra. Write $\mathcal{N} \subset \mathfrak{g}$ for the $\mathbf{G}$ and $\mathbb{C}^{\times}$-invariant subset of nilpotent elements of $\mathfrak{g}$. $\mathbf{G}$ acts on $\mathcal{N}$ with finitely many orbits, which are called the \emph{nilpotent orbits of } $\mathbf{G}$.

If we fix a (non-degenerate, symmetric, bilinear $\mathbf{G}$-invariant) form $B(- , -)$ on $\mathfrak{g}$ (as we can, since $\mathfrak{g}$ is reductive), we get a $\mathbf{G}$-invariant isomorphism $\mathfrak{g} \cong \mathfrak{g}^*$. Let $\mathcal{N}^*$ be the image of $\mathcal{N}$ under this isomorphism. This set is independent of $B$---its elements are precisely the functionals $\lambda \in \mathfrak{g}^*$ which annihilate their centralizers in $\mathfrak{g}$. 

Now suppose $I \subset U(\mathfrak{g})$ is a two-sided ideal. If we equip $U(\mathfrak{g})$ with its usual filtration, then there is a canonical $\mathbf{G}$-invariant isomorphism (of graded commutative algebras) $\gr U(\mathfrak{g}) \cong S(\mathfrak{g})$. There is also an obvious isomoprhism $S(\mathfrak{g}) \cong \mathbb{C}[\mathfrak{g}^*]$. Hence, $\gr(I)$ is identified with a graded ideal in $\mathbb{C}[\mathfrak{g}^*]$.

Make the following

\begin{definition}
The \emph{associated variety} of $I$ is the Zariski-closed subset of $\mathfrak{g}^*$ defined by the graded ideal $\gr(I) \subset \mathbb{C}[\mathfrak{g}^*]$
\begin{align*}
\AV(I) &:= V(\gr(I))\\
&\subset \mathfrak{g}^*
\end{align*}
Since $\gr(I) \subset \mathbb{C}[\mathfrak{g}^*]$ is $\mathbf{G}$-invariant and graded, $\AV(I) \subset \mathfrak{g}^*$ is $\mathbf{G}$ and $\mathbb{C}^{\times}$-invariant. 
\end{definition}

\begin{theorem}[\cite{Vogan1991}, Theorem 5.7]\label{thm:finitecodimensionAV}
Suppose $I \cap Z(\mathfrak{g}) \subset Z(\mathfrak{g})$ is an ideal of finite codimension. Then $\AV(I) \subset \mathcal{N}^*$. 
\end{theorem}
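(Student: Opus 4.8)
The plan is to reduce the statement to two classical facts about the coadjoint action. The first is the identification of the associated graded of the center with the invariants: inside $\gr U(\mathfrak{g}) = S(\mathfrak{g}) = \mathbb{C}[\mathfrak{g}^*]$ one has $\gr Z(\mathfrak{g}) = \mathbb{C}[\mathfrak{g}^*]^{\mathbf{G}}$. The inclusion $\gr Z(\mathfrak{g}) \subseteq \mathbb{C}[\mathfrak{g}^*]^{\mathbf{G}}$ is immediate (central elements are $\ad\mathfrak{g}$-invariant, hence so are their symbols); the surjectivity is the substantive input, due to Kostant, and can also be deduced from the Harish--Chandra and Chevalley isomorphisms by matching degrees of generators together with the $\mathbf{G}$-equivariant symmetrization map $S(\mathfrak{g}) \to U(\mathfrak{g})$. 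The second fact is Kostant's description of the nilpotent cone: $\mathcal{N}^*$ is exactly the common zero locus in $\mathfrak{g}^*$ of the positive-degree elements of $\mathbb{C}[\mathfrak{g}^*]^{\mathbf{G}}$, i.e. the fibre over $0$ of the adjoint quotient $\mathfrak{g}^* \to \operatorname{Spec} \mathbb{C}[\mathfrak{g}^*]^{\mathbf{G}}$.

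Granting these, I would proceed as follows. Set $J := I \cap Z(\mathfrak{g})$, which by hypothesis satisfies $\dim_{\mathbb{C}} Z(\mathfrak{g})/J < \infty$. Since passage to associated graded is exact on filtered vector spaces, $\gr Z(\mathfrak{g})/\gr J \cong \gr\big(Z(\mathfrak{g})/J\big)$ has the same finite dimension, so $\gr J$ is a finite-codimension ideal of $\gr Z(\mathfrak{g}) = \mathbb{C}[\mathfrak{g}^*]^{\mathbf{G}}$; moreover $\gr J$ is homogeneous, being spanned by the symbols of homogeneous-degree components of elements of $J$. A homogeneous ideal of finite codimension in a finitely generated graded $\mathbb{C}$-algebra contains every homogeneous element of sufficiently large degree, so if $p_1, \dots, p_r$ are homogeneous generators of $\mathbb{C}[\mathfrak{g}^*]^{\mathbf{G}}$ (Chevalley), then some power $p_i^{m_i}$ lies in $\gr J$ for each $i$.

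Now $J \subseteq I$, and the symbol of an element of $J$ of filtration degree $n$ lies in $\gr_n I$, so $\gr J \subseteq \gr I$; since $\gr I$ is an ideal of $S(\mathfrak{g})$ this gives $(\gr J)\cdot S(\mathfrak{g}) \subseteq \gr I$, whence
$$\AV(I) = V(\gr I) \subseteq V\big((\gr J)\cdot S(\mathfrak{g})\big) \subseteq V(p_1^{m_1}, \dots, p_r^{m_r}) = V(p_1, \dots, p_r) = \mathcal{N}^*,$$
the last equality being Kostant's characterization of the nilpotent cone. One may also bypass the homogeneous-ideal step: finite codimension of $J$ forces each central $z_i$ with symbol $p_i$ to satisfy a monic relation $z_i^m + a_{m-1}z_i^{m-1} + \dots + a_0 \in J \subseteq I$ with $a_j \in \mathbb{C}$, and since $S(\mathfrak{g})$ is a domain the top-degree symbol of the left-hand side is exactly $p_i^m \in \gr I$.

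I expect no genuinely hard step once the two classical inputs are in place; the only things requiring care are bookkeeping. One must check that $\gr J$ is genuinely homogeneous and that symbols of products of invariants do not collapse — both handled by $S(\mathfrak{g})$ being an integral domain — and one should cite $\gr Z(\mathfrak{g}) = \mathbb{C}[\mathfrak{g}^*]^{\mathbf{G}}$ and the Kostant description of $\mathcal{N}^*$ in a form valid for the (possibly disconnected) reductive $\mathbf{G}$ at hand; if $\mathbf{G}$ is disconnected one should read $\mathbb{C}[\mathfrak{g}^*]^{\mathbf{G}}$ throughout as the $\ad\mathfrak{g}$-invariants, which is both what the center produces and what enters Kostant's theorem.
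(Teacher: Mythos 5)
The paper itself offers no proof of this statement; it is cited verbatim to Vogan (\cite{Vogan1991}, Theorem 5.7), so there is nothing internal to compare against. Your argument is the standard one, and it is correct. The chain of reductions is exactly right: finite codimension of $J = I \cap Z(\mathfrak{g})$ passes to finite codimension of the homogeneous ideal $\gr J \subseteq \gr Z(\mathfrak{g}) = \mathbb{C}[\mathfrak{g}^*]^{\mathbf{G}}$ (the filtration is exhaustive and bounded below, so $\gr$ preserves both exactness and dimension count); a homogeneous finite-codimension ideal in a $\mathbb{Z}_{\geq 0}$-graded $\mathbb{C}$-algebra with $R_0 = \mathbb{C}$ eventually swallows every graded piece, hence contains high powers of the Chevalley generators; and $\gr J \subseteq \gr I$ then forces $V(\gr I) \subseteq V(p_1, \dots, p_r) = \mathcal{N}^*$ by Kostant. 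Your alternative route --- producing, for a lift $z_i \in Z(\mathfrak{g})$ of each $p_i$, a monic polynomial relation in $J$ and observing that its symbol is $p_i^{m}$ because $S(\mathfrak{g})$ is an integral domain --- is logically equivalent but cleaner, since it sidesteps the graded-codimension lemma entirely; it is also closer to how this step is usually written. The one phrase that reads oddly is ``spanned by the symbols of homogeneous-degree components of elements of $J$'': elements of the filtered algebra $U(\mathfrak{g})$ do not have homogeneous components, but the intended point --- that $\gr J$ is by construction a graded subspace of $\gr Z(\mathfrak{g})$ --- is correct and worth stating plainly. Your closing remark about disconnected $\mathbf{G}$ is well placed: both $Z(\mathfrak{g})$ and $\mathcal{N}^*$ depend only on $\mathfrak{g}$, so reading the invariants as $\ad(\mathfrak{g})$-invariants (equivalently $\mathbf{G}^0$-invariants) keeps Chevalley and Kostant applicable without further fuss.
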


In the setting of Theorem \ref{thm:finitecodimensionAV}, $\AV(I)$ is a finite union of $\mathbf{G}$-orbits on $\mathcal{N}^*$. If we write $\mathcal{O}_1,...,\mathcal{O}_n$ for the open $\mathbf{G}$-orbits on $\AV(I)$, then $\overline{\mathcal{O}}_i$ are the irreducible components of $\AV(I)$ and
$$\AV(I) = \bigcup \overline{\mathcal{O}}_i$$
When $I$ is primitive, $\AV(I)$ is irreducible.

\begin{theorem}[Joseph, Borho-Brylinski, \cite{Joseph1985}, \cite{BorhoBrylinski1985}]\label{thm:Josephirreducibility}
If $I$ is primitive, there is a $\mathbf{G}$-orbit $\mathcal{O} \subset \mathcal{N}^*$ such that
$$\AV(I) = \overline{\mathcal{O}}$$
\end{theorem}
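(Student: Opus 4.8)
The plan is to reduce the theorem, via Beilinson--Bernstein localization, to a known statement about orbital varieties and Kazhdan--Lusztig cells. First I would normalize the infinitesimal character: by Duflo's theorem every primitive ideal has the form $I = \mathrm{Ann}_{U(\mathfrak{g})} L(\lambda)$ for some irreducible highest weight module $L(\lambda)$, and by the translation principle of Borho--Jantzen one may move $\lambda$ to a regular dominant integral weight without changing $\AV(I)$; so assume $\lambda$ is regular, dominant and integral. Fix a Borel $\mathbf{B}$ with $\mathfrak{b} = \mathfrak{h} \oplus \mathfrak{n}$ and use the invariant form $B$ to identify $\mathfrak{g} \cong \mathfrak{g}^*$, so that $\mathfrak{n} \subset \mathcal{N}^*$ is a $\mathbf{B}$-stable cone.

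Next I would reduce $\AV(I)$ to the geometry of $\mathfrak{n}$. By a theorem of Joseph (the case of Verma modules is due to Borho--Kraft), $\AV(I) = \overline{\mathbf{G}\cdot\AV(L(\lambda))}$, where $\AV(L(\lambda)) \subset \mathcal{N}^*$ is the associated variety of $L(\lambda)$ regarded as a $U(\mathfrak{g})$-module. Since $L(\lambda)$ is a quotient of the Verma module $M(\lambda)$, since $\AV(M(\lambda)) = \mathfrak{n}$, and since $\mathfrak{b}$ acts locally finitely on $L(\lambda)$, the variety $\AV(L(\lambda))$ is a closed $\mathbf{B}$-stable cone in $\mathfrak{n}$; it is moreover equidimensional (a consequence of Gabber's involutivity theorem), so its irreducible components are orbital varieties $\overline{\mathbf{B}\cdot x_i}$, and $\AV(I) = \bigcup_i \overline{\mathcal{O}_i}$ with $\mathcal{O}_i := \mathbf{G}\cdot x_i$. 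The theorem is therefore equivalent to the claim that all orbital varieties occurring in $\AV(L(\lambda))$ belong to a single nilpotent orbit.

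That last claim is the heart of the argument, and I expect it to be the main obstacle --- it is genuinely non-formal. I would prove it in the $\mathcal{D}$-module model: write $L(\lambda) = \Gamma(\mathcal{B}, \mathcal{M}_w)$, where $\mathcal{M}_w$ is the intermediate extension to the flag variety $\mathcal{B}$ of a twisted local system on a Schubert cell $X_w$; then $\mathrm{Ch}(\mathcal{M}_w)$ is a conic Lagrangian, hence a union $\bigcup_{y \in S_w}\overline{T^*_{X_y}\mathcal{B}}$ of conormal bundles to Schubert cells, and the orbital varieties in $\AV(L(\lambda))$ are exactly the images under the Springer moment map $\mu\colon T^*\mathcal{B} \to \mathcal{N}^*$ of these conormal bundles. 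One then shows that the elements $y$ contributing a top-dimensional component lie in a single left Kazhdan--Lusztig cell, and that the nilpotent orbit attached to such a $y$ via the Steinberg map is constant along a two-sided cell; combined with Joseph's bijection between two-sided cells and special orbits, this forces $\AV(I) = \overline{\mathcal{O}_w}$, where $\mathcal{O}_w$ is the special orbit attached to the two-sided cell of $w$.

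Alternatively one could use Joseph's original, more ring-theoretic endgame: additivity of Goldie rank under translation functors, together with the primeness of $U(\mathfrak{g})/I$ --- so that $I$ is not a proper intersection of two strictly larger two-sided ideals --- can be used to exclude a second top-dimensional component directly. Either way, the genuine content is that the orbit governing $\AV(I)$ is canonically attached to the cell of $w$, independently of which component of $\AV(L(\lambda))$ one inspects; the reductions in the first two paragraphs are routine.
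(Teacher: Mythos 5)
The paper does not prove Theorem \ref{thm:Josephirreducibility}. It is stated as a known result with citations to \cite{Joseph1985} and \cite{BorhoBrylinski1985}, and is used as a black box in Section \ref{sec:unipotentrepresentations} (to guarantee that a unipotent ideal has an irreducible associated variety, so that the map from unipotent infinitesimal characters to nilpotent orbits is well-defined). There is therefore no internal argument to compare your sketch against; it can only be measured against the cited literature.

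As a reconstruction of that literature, your outline has the right skeleton: Duflo's theorem together with translation functors and reduction to the integral Weyl group reduce the problem to a regular integral block; the equality $\AV(I) = \overline{\mathbf{G}\cdot\AV(L(\lambda))}$ and the fact that the components of $\AV(L(\lambda))$ are orbital varieties turn irreducibility of $\AV(I)$ into the statement that these orbital varieties all live in one nilpotent orbit; and the cell-theoretic argument (left cells determine the conormal components, the Steinberg orbit is constant along a two-sided cell, and two-sided cells correspond to special orbits via Lusztig--Joseph) is indeed where the genuine content lies. Two points deserve more care than your sketch gives them. First, the identity $\AV(I) = \overline{\mathbf{G}\cdot\AV(L(\lambda))}$ is itself a nontrivial theorem, tied to the Gelfand--Kirillov dimension equality $\dim\AV(L(\lambda)) = \tfrac{1}{2}\dim\AV(I)$; you cite it as ``a theorem of Joseph,'' which is fair, but it should be flagged as a serious ingredient rather than a routine reduction. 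Second, the sentence asserting that the orbital varieties in $\AV(L(\lambda))$ are ``exactly the images under the Springer moment map of these conormal bundles'' compresses a comparison theorem: the moment map image of $\overline{T^*_{X_y}\mathcal{B}}$ is the $\mathbf{B}$-saturation $\overline{\mathbf{B}(\mathfrak{n}\cap y\mathfrak{n})}$ (which is an orbital variety, by Steinberg), but identifying $\AV(L(\lambda))$ with $\mu(\mathrm{Ch}(\mathcal{M}_w))$ requires matching the $U(\mathfrak{g})$-module filtration on $L(\lambda)$ with the $\mathcal{D}$-module filtration on $\mathcal{M}_w$ and controlling higher cohomology; moreover not every component of $\mathrm{Ch}(\mathcal{M}_w)$ contributes a top-dimensional piece of $\AV(L(\lambda))$. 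Neither point invalidates the plan, but both are precisely the kind of ``routine reduction'' that in fact hides most of the Borho--Brylinski machinery.
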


\section{Unipotent Representations}\label{sec:unipotentrepresentations}

Let $\mathbf{G}$ be a complex connected reductive algebraic group and let $\mathfrak{g}$ be its Lie algebra. Fix a Cartan subalgebra $\mathfrak{h} \subset \mathfrak{g}$ and write $\Lambda \subset \mathfrak{h}^*$ for the lattice of integral weights.

If $I \subset U(\mathfrak{g})$ is a primitive ideal, then by Schur's lemma the intersection $I \cap Z(\mathfrak{g})$ is a maximal ideal in $Z(\mathfrak{g})$ and hence the kernel of an infinitesimal character $\gamma_{\lambda}$ for some $\lambda \in \mathfrak{h}^*$ well-defined up to $W(\mathfrak{g})$. If $\lambda \in \mathfrak{h}^*$, write $\Prim^{\lambda}U(\mathfrak{g})$ for the set of primitive ideals in $U(\mathfrak{g})$ of infinitesimal character $\gamma_{\lambda}$. Primitive ideals in universal enveloping algebras have been studied extensively by Dixmier, Duflo, Joseph, Vogan, Barbasch, and others. We will need only the following handful of results (proofs can be found in \cite{Dixmier1974}).

\begin{theorem}[\cite{Dixmier1974}]\label{thm:primitiveideals}
Let $\lambda \in \mathfrak{h}^*$. Then
\begin{enumerate}
    \item $\Prim^{\lambda}U(\mathfrak{g})$ is a finite set containing a unique minimal and unique maximal element.
    \item If $I, J \in \Prim^{\lambda}U(\mathfrak{g})$ and there is a strict inclusion $I \subset J$, then there is a strict inclusion $\AV(J) \subset \AV(I)$.
    \item  If $I \subset U(\mathfrak{g})$ is any prime two-sided ideal of infinitesimal character $\lambda$, then $I \in \mathrm{Prim}^{\lambda}U(\mathfrak{g})$.
\end{enumerate}
\end{theorem}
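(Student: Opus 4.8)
The plan is to prove Theorem \ref{thm:primitiveideals} by reducing each of the three statements to standard facts about the category $\mathcal{O}$ and Duflo's theorem, following the treatment in Dixmier's book. All three parts hinge on the fundamental fact, due to Duflo, that every primitive ideal of infinitesimal character $\gamma_\lambda$ is the annihilator $\mathrm{Ann}_{U(\mathfrak{g})} L(w\lambda)$ of a simple highest weight module $L(w\lambda)$ for some $w \in W(\mathfrak{g})$ (with $\lambda$ chosen dominant, say). Since $W(\mathfrak{g})$ is finite, the set $\Prim^\lambda U(\mathfrak{g})$ is then visibly finite, which is the first half of (1).

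For the existence of a unique minimal and unique maximal element in (1), I would argue as follows. The unique minimal element is the annihilator of the Verma module $M(\lambda)$ with $\lambda$ dominant (equivalently, of the simple quotient when $\lambda$ is also anti-dominant, or more precisely one takes the annihilator of the "big" projective/the principal series); concretely, $\mathrm{Ann}\, M(\lambda)$ is contained in $\mathrm{Ann}\, L(w\lambda)$ for every $w$ because $L(w\lambda)$ is a subquotient of $M(\lambda)$, so it is the smallest. For the unique maximal element, I would invoke the fact that for $\lambda$ regular dominant the maximal primitive ideal of that infinitesimal character is $\mathrm{Ann}\, L(\lambda)$ (the annihilator of the finite-dimensional module when $\lambda - \rho$ is dominant integral, or in general the annihilator of the anti-dominant simple), together with Duflo's classification showing there are no primitive ideals strictly above it; alternatively one cites that $\Prim^\lambda U(\mathfrak{g})$ ordered by inclusion has a unique maximal element because the quotient $U(\mathfrak{g})/\mathrm{Ann}\, L(\lambda)$ is the unique one of minimal Gelfand--Kirillov dimension and any larger ideal would force a non-nilpotent associated variety, contradicting Theorem \ref{thm:finitecodimensionAV}. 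I expect the cleanest route is simply to quote Duflo and the structure of the Joseph/Borho--Jantzen ordering.

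Part (2) is the statement that a strict inclusion of primitive ideals forces a strict reverse inclusion of associated varieties. Here the plan is: if $I \subsetneq J$, then $\gr I \subseteq \gr J$, so $\AV(J) \subseteq \AV(I)$ automatically; the content is strictness. This follows from the equality $\dim \AV(I) = \mathrm{GKdim}\, U(\mathfrak{g})/I$ (the associated variety computes Gelfand--Kirillov dimension) combined with the fact that a strict quotient map $U(\mathfrak{g})/I \twoheadrightarrow U(\mathfrak{g})/J$ with $I \neq J$ strictly drops the Gelfand--Kirillov dimension when both are primitive --- this last point uses that primitive quotients are prime rings of finite GK-dimension and that a prime ring cannot have a proper prime quotient of the same GK-dimension (Bernstein's inequality / the fact that $U(\mathfrak{g})/I$ is a domain up to localization has the right dimension theory). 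This is the part I expect to require the most care, since "strict inclusion of ideals implies strict drop in dimension" is false for general rings and genuinely uses primeness plus the good dimension theory of $U(\mathfrak{g})$; I would cite Jantzen or Borho--Kraft for the precise statement.

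Finally, part (3): any prime two-sided ideal of infinitesimal character $\gamma_\lambda$ is automatically primitive. The plan is to use that $U(\mathfrak{g})/I$, being a prime Noetherian ring satisfying a polynomial identity bound coming from the finite-codimensionality of $I \cap Z(\mathfrak{g})$ (so that $U(\mathfrak{g})/I$ is a finite module over... no --- rather, that $U(\mathfrak{g})/I$ has a simple faithful module). Concretely, by a theorem of Dixmier (the "Dixmier--Moeglin" style equivalence in this context, or simply Proposition 3.1.15 / Chapter 4 of Dixmier), a prime ideal $I$ with $Z(\mathfrak{g})/(I\cap Z(\mathfrak{g}))$ finite-dimensional is primitive: one picks any maximal left ideal $\mathfrak{m} \supseteq I$ with $I$ the largest two-sided ideal inside it (possible because $I$ is the intersection of the primitive ideals above it and there are only finitely many, all of the same infinitesimal character, so primeness forces $I$ to equal one of them). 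The key input is again finiteness of $\Prim^\lambda U(\mathfrak{g})$ from Duflo: $I = \bigcap_{k} P_k$ for finitely many primitive $P_k \supseteq I$, and $I$ prime then forces $I = P_k$ for some $k$. I would present (3) as a short corollary of (1) via this intersection argument, which is the natural order of dependence.
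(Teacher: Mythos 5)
The paper does not prove this theorem; it states it and refers the reader to Dixmier's book, so there is no in-text proof to compare against. Your sketch assembles the standard argument from the literature, and the overall architecture --- Duflo's theorem for finiteness, $\mathrm{Ann}\,M(\lambda)$ as the minimal element, a dimension-drop argument for (2), and a finite-intersection-of-primitives argument for (3) --- is the right one. A few places deserve tightening before it would count as a proof rather than a guide to the references. For the unique \emph{maximal} element in (1), pointing at the annihilator of the dominant simple and saying ``there are no primitives strictly above it'' is restating the claim; and ``minimal Gelfand--Kirillov dimension'' by itself does not preclude two incomparable primitives both attaining the minimum. The actual uniqueness proof is more delicate (translation functors and $\tau$-invariants, or Joseph's cell-theoretic analysis of the Duflo map), so this needs a precise citation rather than a GK-dimension heuristic. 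For (2), you correctly flag the danger spot: the assertion that a prime Noetherian ring has no proper prime quotient of the same GK dimension is false in general. The true statement is a special theorem about two-sided ideals of $U(\mathfrak{g})$ with fixed central character, resting on Goldie-rank additivity / Harish--Chandra bimodule theory (Borho--Kraft, Joseph, Jantzen), and you should state and cite that version rather than a generic ring-theoretic principle. For (3), the step ``$I$ is the intersection of the finitely many primitives above it'' is exactly the assertion that $U(\mathfrak{g})$ is a Jacobson ring, which is a nontrivial theorem (Duflo's Nullstellensatz for enveloping algebras) and should be named as the key input; once it is invoked, the finiteness from (1) plus primeness of $I$ immediately forces $I$ to equal one of the finitely many primitives, as you say. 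With those three citations made explicit, your plan is a faithful reconstruction of the proof the paper is deferring to.
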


In \cite{BarbaschVogan1985}, Barbasch and Vogan use the following definition:

\begin{definition}\label{def:unipotent}
Let $\lambda \in \mathbb{Q} \otimes_{\mathbb{Z}} \Lambda \subset \mathfrak{h}^*$. A primitive ideal $I \in \Prim^{\lambda}U(\mathfrak{g})$ is unipotent if the following conditions are satisfied

\begin{enumerate}
    \item If $\mu \in \lambda + \Lambda$ and $J \in \Prim_{\mu}U(\mathfrak{g})$ satisfies 
    $$\AV(J) \subseteq \AV(I)$$
    then
    \begin{equation}\label{eqn:inequalityofinfchar}
        B(\mu, \mu) \geq B(\lambda,\lambda)
    \end{equation}
    \item If, in addition, \ref{eqn:inequalityofinfchar} is an equality, then $\mu \in W(\mathfrak{g})\lambda$ and
    $$\AV(J) = \AV(I)$$
\end{enumerate}
\end{definition}

In the following proposition, we catalog some of the elementary properties of unipotent ideals:

\begin{proposition}\label{prop:propsofunipotentideals}
For every $\lambda \in \mathfrak{h}^*$, there is at most one unipotent ideal in $\Prim^{\lambda}U(\mathfrak{g})$. If $I \in \Prim^{\lambda}U(\mathfrak{g})$ is unipotent, then $I$ is a maximal ideal and for every $J \in \Prim^{\lambda}U(\mathfrak{g})$, $J \neq I$, there is a proper inclusion
$$\AV(I) \subset \AV(J)$$
\end{proposition}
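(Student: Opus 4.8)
The plan is to deduce all three claims from one fact: a unipotent ideal $I \in \Prim^{\lambda}U(\mathfrak{g})$ is a maximal proper two-sided ideal of $U(\mathfrak{g})$. (If $\lambda \notin \mathbb{Q}\otimes_{\mathbb{Z}}\Lambda$ there are no unipotent ideals and the first claim is vacuous, so we may assume $\lambda \in \mathbb{Q}\otimes_{\mathbb{Z}}\Lambda$.) Once maximality is established, the remaining claims follow formally from the order-theoretic content of Theorem~\ref{thm:primitiveideals}, with no further use of Definition~\ref{def:unipotent}.

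\emph{Maximality.} Suppose $I$ is not maximal; by Zorn's lemma choose a maximal proper two-sided ideal $J$ with $I \subsetneq J$. Since $J$ is proper, $J \cap Z(\mathfrak{g})$ is a proper ideal of $Z(\mathfrak{g})$ containing the maximal ideal $I \cap Z(\mathfrak{g}) = \ker\gamma_{\lambda}$, hence equals $\ker\gamma_{\lambda}$; so $J$ has infinitesimal character $\gamma_{\lambda}$. A maximal proper two-sided ideal is prime (its quotient ring is simple), so $J \in \Prim^{\lambda}U(\mathfrak{g})$ by Theorem~\ref{thm:primitiveideals}(3), and then $I \subsetneq J$ forces $\AV(J) \subsetneq \AV(I)$ by Theorem~\ref{thm:primitiveideals}(2). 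Applying Definition~\ref{def:unipotent} to the unipotent ideal $I$ with $\mu = \lambda$ (allowed since $0 \in \Lambda$) and the ideal $J$: from $\AV(J) \subseteq \AV(I)$, part (1) gives $B(\lambda,\lambda) \geq B(\lambda,\lambda)$, an equality, whence part (2) forces $\AV(J) = \AV(I)$ --- contradicting $\AV(J) \subsetneq \AV(I)$. So $I$ is maximal, which is the second assertion.

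\emph{Consequences.} As a maximal proper two-sided ideal, $I$ is a maximal element of the poset $\Prim^{\lambda}U(\mathfrak{g})$. Moreover any $P \in \Prim^{\lambda}U(\mathfrak{g})$ is contained in some maximal proper two-sided ideal $J'$, and the bookkeeping above shows $J'$ has infinitesimal character $\gamma_{\lambda}$ and is prime, hence $J' \in \Prim^{\lambda}U(\mathfrak{g})$ and is a maximal element there; since that poset has a unique maximal element (Theorem~\ref{thm:primitiveideals}(1)) and $I$ is one, $J' = I$, so $P \subseteq I$. Thus $I$ is the greatest element of $\Prim^{\lambda}U(\mathfrak{g})$. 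Two unipotent ideals would each be this greatest element and hence coincide --- the uniqueness assertion; and if $J \in \Prim^{\lambda}U(\mathfrak{g})$ with $J \neq I$, then $J \subsetneq I$, so Theorem~\ref{thm:primitiveideals}(2) gives the proper inclusion $\AV(I) \subset \AV(J)$, which is the last assertion.

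The step I expect to be the real obstacle is maximality, and specifically the need to know that the ideal $J$ produced by enlarging $I$ is still governed by Definition~\ref{def:unipotent} --- i.e.\ that $J$ again lies in $\Prim^{\lambda}U(\mathfrak{g})$. This is where one uses both that enlarging a two-sided ideal of infinitesimal character $\gamma_{\lambda}$ cannot change that infinitesimal character and that a prime two-sided ideal of fixed infinitesimal character is automatically primitive (Theorem~\ref{thm:primitiveideals}(3)). Once that is in place, the remaining arguments are purely order-theoretic, resting on Theorem~\ref{thm:primitiveideals}(1)--(2).
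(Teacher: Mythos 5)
Your proof is correct and takes essentially the same approach as the paper's: enlarge $I$ to a maximal two-sided ideal $J$, observe $\AV(J)\subseteq\AV(I)$ forces $\AV(J)=\AV(I)$ by part (2) of Definition \ref{def:unipotent}, conclude $I=J$ by Theorem \ref{thm:primitiveideals}(2), and then use the unique maximal element from Theorem \ref{thm:primitiveideals}(1) to get uniqueness and the strict inclusion of associated varieties. The only cosmetic differences are that you phrase the maximality step as a proof by contradiction and establish primitivity of $J$ by going through Theorem \ref{thm:primitiveideals}(3) (prime with the right infinitesimal character), whereas the paper simply invokes the general fact that a maximal two-sided ideal in any associative algebra is automatically primitive.
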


\begin{proof}
Choose a maximal ideal $J \subset U(\mathfrak{g})$ with $I \subseteq J$. Note that $J$ is primitive (in any associative algebra, every maximal ideal is primitive). The inclusion $I \subseteq J$ induces an inclusion $\AV(J) \subseteq \AV(I)$, which must be an equality by the second part of Definition \ref{def:unipotent}. By part 2 of Theorem \ref{thm:primitiveideals}, this implies $I=J$. Hence, $I$ is a maximal ideal. By part 1 of Theorem \ref{thm:primitiveideals}, $\Prim^{\lambda}U(\mathfrak{g})$ contains a unique maximal element. In particular, if $J \in \Prim^{\lambda}U(\mathfrak{g})$, $J \neq I$, then there is a strict inclusion $J \subset I$ and hence a strict inclusion $\AV(I) \subset \AV(J)$, as desired.
\end{proof}

We will say that an infinitesimal character $\gamma_{\lambda}$ is \emph{unipotent} if $\Prim^{\lambda}U(\mathfrak{g})$ contains a unipotent ideal $I_{\lambda} \subset U(\mathfrak{g})$. In this case, $I_{\lambda}$ is unique (by Proposition \ref{prop:propsofunipotentideals}) and is in fact the (unique) maximal ideal of infinitesimal character $\gamma_{\lambda}$. By Theorem \ref{thm:Josephirreducibility}, there is a (unique) $\mathbf{G}$-orbit $\mathcal{O} \subset \mathcal{N}^*$ such that
$$\AV(I_{\lambda}) = \overline{\mathcal{O}}$$
This defines a map from unipotent infinitesimal characters to $\mathbf{G}$-orbits on $\mathcal{N}^*$. Write $\unip(\mathcal{O})$ for the fiber of this map over a $\mathbf{G}$-orbit $\mathcal{O} \subset \mathcal{N}^*$. Usually, $\unip(\mathcal{O})$ will be infinite.

If we have in mind a pair $(\mathfrak{g},\mathbf{K})$, we will write $\Unip^{\lambda}(\mathcal{O})$  for the set of (isomorphism classes of) irreducible $(\mathfrak{g},\mathbf{K})$-modules $X$ with $\mathrm{Ann}(X) = I_{\lambda}$, and
$$\Unip(\mathcal{O}) := \bigsqcup_{\lambda \in \unip(\mathcal{O})} \Unip^{\lambda}(\mathcal{O})$$

\subsection{Special Unipotent Representations}

In \cite{BarbaschVogan1985}, Barbasch and Vogan study a distinguished class of unipotent representations related to the Arthur Conjectures (\cite{Arthur1983},\cite{Arthur1989}) called (by Barbasch and Vogan) the \emph{special} unipotent representations. For completeness, we will review their definition below.

In the setting of Section \ref{sec:unipotentrepresentations}, write $\mathbf{G}^{\vee}$ for the Langlands dual of $\mathbf{G}$ and $\mathfrak{g}^{\vee}$ for its Lie algebra. By construction, $\mathfrak{g}^{\vee}$ contains a distinguished Cartan subalgebra $\mathfrak{h}^{\vee}$ which is naturally identified with $\mathfrak{h}^*$. There is an order-reversing map
$$\psi: \mathcal{N}^{\vee}/\mathbf{G}^{\vee} \to  \mathcal{N}/\mathbf{G}$$
first defined by Spaltenstein (\cite{Spaltenstein1982}). If $\mathcal{O}^{\vee} \subset \mathcal{N}^{\vee}$ is a nilpotent $\mathbf{G}^{\vee}$-orbit, we can find a Lie algebra homomorphism (far from unique)
$$\phi_{\mathcal{O}^{\vee}}: \mathfrak{sl}_2(\mathbb{C}) \to \mathfrak{g}^{\vee} \qquad \phi(E) \in \mathcal{O}^{\vee}$$
Then $d^{\vee} := \frac{1}{2}\phi_{\mathcal{O}^{\vee}}(D)$ is a semisimple element of $\mathfrak{g}^{\vee}$. Conjugating by $\mathbf{G}^{\vee}$ if necessary, we can assume that $d^{\vee} \in \mathfrak{h}^{\vee} \cong \mathfrak{h}^*$. This element of $\mathfrak{h}^*$ is well-defined modulo $W(\mathfrak{g})$ and therefore determines a character
$$\gamma_{\mathcal{O}^{\vee}} := \gamma_{d^{\vee}}: Z(\mathfrak{g}) \to \mathbb{C}$$
by the Harish-Chandra isomorphism. If $\mathcal{O} \subset \mathcal{N}$ is a $\mathbf{G}$-orbit, let
$$\mathrm{arth}(\mathcal{O}) := \{\gamma_{\mathcal{O}^{\vee}}: \psi(\mathcal{O}^{\vee}) = \mathcal{O}\}$$
This is a finite (sometimes empty) set of small (often singular) infinitesimal characters associated to $\mathcal{O}$. If $\mathcal{O}'$ is a $\mathbf{G}$-orbit on $\mathcal{N}^*$, the set $\mathrm{arth}(\mathcal{O}')$ is defined by first replacing $\mathcal{O}'$ with its image in $\mathcal{N}$ under the $\mathbf{G}$-invariant identification $B:\mathfrak{g} \cong \mathfrak{g}^*$. Barbasch and Vogan prove

\begin{theorem}[\cite{BarbaschVogan1985}]
For every $\mathbf{G}$-orbit $\mathcal{O} \subset \mathcal{N}^*$, there is an inclusion
$$\mathrm{arth}(\mathcal{O}) \subseteq \unip(\mathcal{O})$$
\end{theorem}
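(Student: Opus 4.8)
The claim is that for every nilpotent $\mathbf{G}$-orbit $\mathcal{O} \subset \mathcal{N}^*$, every Arthur infinitesimal character $\gamma_{\mathcal{O}^\vee}$ with $\psi(\mathcal{O}^\vee) = \mathcal{O}$ is unipotent in the sense of Definition \ref{def:unipotent}, i.e. $\Prim^{d^\vee}U(\mathfrak{g})$ contains a unipotent ideal $I_{d^\vee}$ with $\AV(I_{d^\vee}) = \overline{\mathcal{O}}$. The plan is to produce the candidate ideal explicitly as a \emph{special unipotent ideal} in the Barbasch--Vogan sense, and then verify the two inequalities of Definition \ref{def:unipotent} by hand. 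First I would fix $\lambda = d^\vee \in \mathfrak{h}^\vee \cong \mathfrak{h}^*$, the semisimple element of the $\mathfrak{sl}_2$-triple for $\mathcal{O}^\vee$, and recall that Barbasch--Vogan attach to $\mathcal{O}^\vee$ a maximal primitive ideal $I_{\mathcal{O}^\vee} \in \Prim^\lambda U(\mathfrak{g})$ whose associated variety is $\overline{\psi(\mathcal{O}^\vee)} = \overline{\mathcal{O}}$; this uses the Spaltenstein duality $\psi$ together with the computation of associated varieties of primitive ideals with half-integral infinitesimal character coming from a dual $\mathfrak{sl}_2$. So the candidate for $I_\lambda$ is $I_{\mathcal{O}^\vee}$, and by Proposition \ref{prop:propsofunipotentideals} it suffices to check that $I_{\mathcal{O}^\vee}$ satisfies Definition \ref{def:unipotent}.

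Next I would verify condition (1): if $\mu \in \lambda + \Lambda$ and $J \in \Prim^\mu U(\mathfrak{g})$ has $\AV(J) \subseteq \AV(I_{\mathcal{O}^\vee}) = \overline{\mathcal{O}}$, then $B(\mu,\mu) \geq B(\lambda,\lambda)$. The key input here is a lower bound, in terms of $B(-,-)$, on the infinitesimal characters of primitive ideals whose associated variety is contained in a fixed nilpotent orbit closure. One route: reduce to the integral Weyl group of $\mu$, pass to the corresponding ``integral'' reductive situation, and invoke the classification of primitive ideals with a given associated variety — in particular, among all $W(\mathfrak{g})$-orbits $\mu$ with $\AV(J) \subseteq \overline{\mathcal{O}}$ for some $J \in \Prim^\mu$, the Arthur/dual-$\mathfrak{sl}_2$ parameter $d^\vee$ minimizes $B(\mu,\mu)$. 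This is precisely the combinatorial heart of \cite{BarbaschVogan1985} and I would cite their analysis of the order relation on special orbits and the Lusztig--Spaltenstein duality, translating the inequality $\AV(J) \subseteq \overline{\mathcal{O}}$ on the ``group side'' into an inequality $\mathcal{O}^\vee \subseteq \overline{(\mathcal{O}')^\vee}$ on the ``dual side'' via $\psi$, where it becomes a statement about which $d'$ can occur. Then condition (2) — that equality $B(\mu,\mu) = B(\lambda,\lambda)$ forces $\mu \in W(\mathfrak{g})\lambda$ and $\AV(J) = \overline{\mathcal{O}}$ — follows from the same analysis together with the fact that distinct $\mathbf{G}^\vee$-orbits give dual elements of distinct length, so the minimum is attained uniquely (up to $W$) at $d^\vee$, and uniqueness of the orbit with that associated variety comes from Theorem \ref{thm:Josephirreducibility}.

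Finally I would package this: having checked (1) and (2), Definition \ref{def:unipotent} says $I_{\mathcal{O}^\vee}$ is a unipotent ideal, hence $\gamma_\lambda = \gamma_{\mathcal{O}^\vee}$ is a unipotent infinitesimal character with $\AV(I_\lambda) = \overline{\mathcal{O}}$, i.e. $\gamma_{\mathcal{O}^\vee} \in \unip(\mathcal{O})$. Ranging over all $\mathcal{O}^\vee$ with $\psi(\mathcal{O}^\vee) = \mathcal{O}$ gives $\mathrm{arth}(\mathcal{O}) \subseteq \unip(\mathcal{O})$.

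\textbf{Main obstacle.} The real work is condition (1): establishing that $d^\vee$ genuinely minimizes $B(\mu,\mu)$ among all infinitesimal characters in the coset $\lambda + \Lambda$ supporting a primitive ideal with $\AV \subseteq \overline{\mathcal{O}}$. This is not formal — it rests on the precise interaction between Spaltenstein duality, the closure order on nilpotent orbits, and the explicit description of which orbit closures arise as associated varieties at a given infinitesimal character (Barbasch--Vogan's ``$\tau$-invariant'' and cell-theoretic machinery). I expect the cleanest writeup to quote this minimality directly from \cite{BarbaschVogan1985} rather than reprove it, since reconstructing it would essentially duplicate their paper; the contribution of this proof is then just the observation that their construction exactly matches the axiomatic Definition \ref{def:unipotent} used here.
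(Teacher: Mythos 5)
The paper does not prove this theorem; it is quoted from \cite{BarbaschVogan1985} without argument, so there is no in-paper proof to compare against. Your reconstruction of the Barbasch--Vogan argument is structurally correct: by Proposition \ref{prop:propsofunipotentideals} the only possible candidate is the maximal primitive ideal at infinitesimal character $\gamma_{d^{\vee}}$, Barbasch--Vogan prove its associated variety is $\overline{\psi(\mathcal{O}^{\vee})} = \overline{\mathcal{O}}$, and what remains is verifying the two length conditions of Definition \ref{def:unipotent}, which is the genuine content of their paper and which you rightly propose to cite rather than reprove.

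Two soft spots are worth tightening if you write this up. Your inline justification for condition (2) --- that ``distinct $\mathbf{G}^{\vee}$-orbits give dual elements of distinct length'' --- only rules out \emph{other Arthur parameters} attaining the minimum; but condition (2) quantifies over \emph{arbitrary} $\mu \in \lambda + \Lambda$ and $J \in \Prim^{\mu}U(\mathfrak{g})$ satisfying $\AV(J) \subseteq \overline{\mathcal{O}}$, not just parameters of the form $d'^{\vee}$ for some dual orbit $\mathcal{O}'^{\vee}$. So the uniqueness assertion is genuinely stronger than a statement about Arthur parameters and requires the same Springer-correspondence and $a$-function machinery that underlies condition (1), not a separate elementary observation. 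Relatedly, your appeal to Theorem \ref{thm:Josephirreducibility} at the end only tells you that $\AV(J)$ is the closure of \emph{some} orbit when $J$ is primitive; it does not identify that orbit as $\mathcal{O}$. The identification $\AV(J) = \overline{\mathcal{O}}$ in the equality case is again part of the Barbasch--Vogan analysis, not a formal consequence of irreducibility of $\AV(J)$.
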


If we have a pair $(\mathfrak{g},\mathbf{K})$ in mind, we will write
$$\Unip^{\mathrm{arth}}(\mathcal{O}) := \bigsqcup_{\lambda \in \mathrm{arth}(\mathcal{O})} \Unip^{\lambda}(\mathcal{O})$$

\begin{example}
If $G = SL_2(\mathbb{R})$, there are two $\mathbf{G}$-orbits on $\mathcal{N}^*$: $\{0\}$ and the principal nilpotent orbit, $\mathcal{O}^{\mathrm{prin}}$. Infinitesimal characters are parameterized by (weakly) dominant weights $\lambda \in \mathfrak{h}^*$. In standard coordinates
$$\unip(\{0\}) = \{1\} \qquad \unip(\mathcal{O}^{\mathrm{prin}}) = [0,\frac{1}{2}] \cap \mathbb{Q}$$
and
$$\mathrm{arth}(\{0\}) = \{1\} \qquad \mathrm{arth}(\mathcal{O}^{\mathrm{prin}}) = \{0\}$$
\end{example}

\section{Upper Triangularity for Unipotent Representations}\label{sec:uppertriangularity}

Let $\mathfrak{q}$ be a parabolic subalgebra of $\mathfrak{g}$. Fix a Levi decomposition $\mathfrak{q} = \mathfrak{l}\oplus \mathfrak{u}$ and a nilpotent $\mathbf{L}$-orbit $\mathcal{O}_{\mathfrak{l}} \subset \mathcal{N}_{\mathfrak{l}}$. Lusztig and Spaltenstein observed (\cite{LusztigSpaltenstein1979}) that the set $\mathbf{G} \cdot \left(\overline{\mathcal{O}}_{\mathfrak{l}} + \mathfrak{u}\right) \subset \mathcal{N}_{\mathfrak{g}}$ contains a unique open $\mathbf{G}$-orbit $\mathcal{O}_{\mathfrak{g}} \subset \mathcal{N}_{\mathfrak{g}}$, which (they proved) depends only on $\mathfrak{l}$ and $\mathcal{O}_{\mathfrak{l}}$. The correspondence $\mathcal{O}_{\mathfrak{l}} \mapsto \mathcal{O}_{\mathfrak{g}}$ is called the \emph{induction of nilpotent orbits} and is denoted by 
$$\mathcal{O}_{\mathfrak{g}} = \mathrm{Ind}^{\mathfrak{g}}_{\mathfrak{l}}\mathcal{O}_{\mathfrak{l}}$$
There is a parallel notion of induction for $(\mathfrak{g},\mathbf{K})$-modules. In Section \ref{sec:parabolicinduction}, we defined the category $M^{\mathrm{fl}}(\mathfrak{l},\frac{\mathbf{Q}\cap \mathbf{K}}{\mathbf{U}\cap \mathbf{K}})$ and the group homomorphism
$$I(\mathfrak{l},\mathfrak{q},\cdot): KM^{\mathrm{fl}}(\mathfrak{l},\frac{\mathbf{Q}\cap \mathbf{K}}{\mathbf{U}\cap \mathbf{K}}) \to KM^{\mathrm{fl}}(\mathfrak{g},\mathbf{K})$$
In Section \ref{sec:unipotentrepresentations}, we defined sets $\Unip(\mathcal{O}_{\mathfrak{l}})$ and $\Unip(\mathcal{O}_{\mathfrak{g}})$ consisting of irreducible $(\mathfrak{l},\frac{\mathbf{Q}\cap \mathbf{K}}{\mathbf{U}\cap \mathbf{K}})$-modules and $(\mathfrak{g},\mathbf{K})$-modules, respectively. The following question arises immediately

\begin{question}\label{question:induction}
What is the relationship between $\Unip(\mathcal{O}_{\mathfrak{g}})$ and $I(\mathfrak{l},\mathfrak{q},\Unip(\mathcal{O}_{\mathfrak{l}}))$?
\end{question}

A bit of thought reveals that this is not the right question to ask. The orbit $\mathcal{O}_{\mathfrak{g}}$ can be obtained by induction from a whole range of parabolics, of which $\mathfrak{q}$ is only one. Each of these parabolics contributes its own set of induced representations, which should be taken into account. To fix (and ultimately answer) Question \ref{question:induction}, we will consider the partial flag variety $\mathcal{Q}$ of parabolic subalgebras, $\mathbf{G}$-conjugate to $\mathfrak{q}$. If $\mathfrak{q}' \in \mathcal{Q}$, there is a group element $g \in \mathbf{G}$ such that $\Ad(g)\mathfrak{q} = \mathfrak{q}'$. If we fix a Levi decomposition $\mathfrak{q}' = \mathfrak{l}'\oplus \mathfrak{u}'$ and require that $\Ad(g)\mathfrak{l}=\mathfrak{l}'$, then $g$ is unique up to right multiplication by $\mathbf{L}$ and left multiplication by $\mathbf{L}'$. In particular, $\Ad(g)$ induces a \emph{canonical} isomorphism $Z(\mathfrak{l}) \cong Z(\mathfrak{l}')$. If $\gamma$ is a character of $Z(\mathfrak{l})$, we will write $\gamma'$ for the corresponding character of $Z(\mathfrak{l}')$.

\begin{definition}\label{def:degenerate}
Let $\gamma \in \unip(\mathcal{O}_{\mathfrak{l}})$ and denote by $X^{\vee}$ the contragradient of a $(\mathfrak{g},\mathbf{K})$-module $X$ (or its class). A degenerate representation for the data $(\mathfrak{q},\gamma)$ is a class in $KM^{\mathrm{fl}}(\mathfrak{g},\mathbf{K})$ of the form
$$I(\mathfrak{l}',\mathfrak{q}',W^{\vee})^{\vee}$$
where $\mathfrak{q}' \in \mathcal{Q}$, $\mathfrak{l}' \subset \mathfrak{q}'$ is a Levi subalgebra, and $W$ is an irreducible $(\mathfrak{l}',\frac{\mathbf{Q}'\cap \mathbf{K}}{\mathbf{U}'\cap \mathbf{K}})$-module with annihilator equal to the unipotent ideal $I_{\mu'} \subset U(\mathfrak{l}')$ (cf Section \ref{sec:unipotentrepresentations}). Write $\Deg^{\gamma}(\mathfrak{q})$ for the set of all such representations. 
\end{definition}

Fix, once and for all, a Cartan subalgebra $\mathfrak{h} \subset \mathfrak{l}$ and a positive system $\Delta^+(\mathfrak{g},\mathfrak{h})$ compatible with $\mathfrak{q}$. Suppose the infinitesimal character $\gamma$ in Definition \ref{def:degenerate} is represented by a functional $\mu \in \mathfrak{h}^*$. Then a degenerate representation $[M] \in \Deg^{\gamma_{\mu}}(\mathfrak{q})$ has infinitesimal character $\gamma_{\mu - \rho(\mathfrak{u})}$. Suppose that $\gamma_{\mu-\rho(\mathfrak{u})} \in \unip(\mathcal{O}_{\mathfrak{g}})$. A more reasonable formulation of Question \ref{question:induction} is the following

\begin{question}
What is the relationship between $\Unip^{\mu-\rho(\mathfrak{u})}(\mathcal{O}_{\mathfrak{g}})$ and $\Deg^{\mu}(\mathfrak{q})$? 
\end{question}

Our main result provides an answer to this question under some conditions on $\mu$ and on $\mathcal{O}_{\mathfrak{g}}$:

\begin{theorem}\label{thm:unipotentsanddegenerates}
Let $\mu \in \mathfrak{h}^*$. Suppose
\begin{enumerate}
    \item $\gamma_{\mu} \in \unip(\mathcal{O}_{\mathfrak{l}})$
    \item $\gamma_{\mu - \rho(\mathfrak{u})} \in \unip(\mathcal{O}_{\mathfrak{g}})$
    \item $\mu - \rho(\mathfrak{u})$ is antidominant for $\mathfrak{u}$
    \item The moment map
    $$\eta: \mathbf{G} \times_{\mathbf{Q}} \left(\overline{\mathcal{O}}_{\mathfrak{l}} + \mathfrak{u}\right) \to \overline{\mathcal{O}}_{\mathfrak{g}}$$
    is birational
\end{enumerate}
Then the sets $\Unip^{\mu-\rho(\mathfrak{u})}(\mathcal{O}_{\mathfrak{g}})$ and $\Deg^{\mu}(\mathfrak{q})$ have the same $\mathbb{Z}$-span in $KM^{\mathrm{fl}}(\mathfrak{g},\mathbf{K})$ and are related therein by an upper triangular matrix with $\pm 1$'s along the diagonal.
\end{theorem}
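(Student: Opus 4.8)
The plan is to realize both sets as bases, up to sign, of a single subquotient of $KM^{\mathrm{fl}}(\mathfrak{g},\mathbf{K})$, and to compare them via the geometry of the moment map. First I would pass to the dual picture. By Definition \ref{def:degenerate}, a degenerate representation is $I(\mathfrak{l}',\mathfrak{q}',W^\vee)^\vee$ for $W$ irreducible with annihilator the unipotent ideal $I_{\mu'}\subset U(\mathfrak{l}')$; dualizing turns the problem into one about the classes $I(\mathfrak{l}',\mathfrak{q}',W)$ directly, and by the $\mathbf{G}$-conjugation discussion preceding Definition \ref{def:degenerate} it suffices to analyze a single representative $\mathfrak{q}$ together with the finite group $N_{\mathbf{G}}(\mathfrak{l})/\mathbf{L}$ permuting the data. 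So I reduce to understanding, for each irreducible $W\in\Unip^{\mu}(\mathcal{O}_{\mathfrak{l}})$, the composition series of $I(\mathfrak{l},\mathfrak{q},W)$ in $KM^{\mathrm{fl}}(\mathfrak{g},\mathbf{K})$, under hypotheses (2)--(4).

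The core is a support/annihilator estimate. Using Theorem \ref{thm:propertiesofI}(2), each constituent of $I(\mathfrak{l},\mathfrak{q},W)$ has infinitesimal character $\gamma_{\mu-\rho(\mathfrak{u})}$, which by (2) is the unipotent infinitesimal character attached to $\mathcal{O}_{\mathfrak{g}}$; by Proposition \ref{prop:propsofunipotentideals} the unipotent ideal $I_{\mu-\rho(\mathfrak{u})}$ is the \emph{unique maximal} ideal of this infinitesimal character, and every other primitive ideal $J$ of this infinitesimal character has $\AV(I_{\mu-\rho(\mathfrak{u})})=\overline{\mathcal{O}}_{\mathfrak{g}}\subsetneq\AV(J)$. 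The associated variety of any constituent of $I(\mathfrak{l},\mathfrak{q},W)$ is contained in $\mathbf{G}\cdot(\AV(\mathrm{Ann}\,W)+\mathfrak{u})\subseteq\mathbf{G}\cdot(\overline{\mathcal{O}}_{\mathfrak{l}}+\mathfrak{u})=\overline{\mathcal{O}}_{\mathfrak{g}}$ — this is where the birationality hypothesis (4) enters, since it guarantees no dimension jump: the moment map image has the same dimension as $\mathbf{G}\times_{\mathbf{Q}}(\overline{\mathcal{O}}_{\mathfrak{l}}+\mathfrak{u})$, forcing the generic constituent to have associated variety exactly $\overline{\mathcal{O}}_{\mathfrak{g}}$ and hence annihilator $I_{\mu-\rho(\mathfrak{u})}$, i.e. to lie in $\Unip^{\mu-\rho(\mathfrak{u})}(\mathcal{O}_{\mathfrak{g}})$. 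Constituents with smaller associated variety are \emph{not} unipotent for $\mathcal{O}_{\mathfrak{g}}$, and I would argue (using the definition of unipotent infinitesimal character, Definition \ref{def:unipotent}, and hypothesis (3) that $\mu-\rho(\mathfrak{u})$ is antidominant for $\mathfrak{u}$, which controls which translation functors are involved) that they in fact do not occur at the relevant infinitesimal character at all — or more precisely, that they can be ordered strictly below the unipotent ones. This produces the upper-triangular shape: order $\Unip^{\mu-\rho(\mathfrak{u})}(\mathcal{O}_{\mathfrak{g}})$ and $\Deg^{\mu}(\mathfrak{q})$ compatibly with a refinement of the closure order on $\AV$ of the non-unipotent junk together with the Bruhat-type order on the translation-functor combinatorics, so that $I(\mathfrak{l},\mathfrak{q},W) = \pm W' + (\text{terms lower in the order})$ for a unique $W'\in\Unip^{\mu-\rho(\mathfrak{u})}(\mathcal{O}_{\mathfrak{g}})$.

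To get the $\pm 1$ on the diagonal and the bijection between the two index sets, I would use the theory of twisted $\mathcal{D}$-modules on $\mathcal{Q}$ and translation functors (the "brief digressions" promised in the introduction): realize $I(\mathfrak{l},\mathfrak{q},-)$ geometrically as pushforward along $\mathcal{Q}$, and identify the leading term with an irreducible $\mathcal{D}$-module supported on the relevant $\mathbf{K}$-orbit closure, whose multiplicity in the pushforward is $\pm 1$ precisely because $\eta$ is birational (the birational map induces an isomorphism on an open dense locus, so the pushforward of the relevant simple object contains a given simple with multiplicity one). The antidominance hypothesis (3) ensures the translation functor from the (possibly singular) infinitesimal character $\gamma_{\mu-\rho(\mathfrak{u})}$ behaves as an equivalence onto its image on the subcategory generated by the unipotent modules, so no multiplicities are lost or doubled. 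Running this for every Levi $\mathfrak{l}'\in\mathcal{Q}$ and every irreducible $W$ with annihilator $I_{\mu'}$, and dualizing back, shows the $\mathbb{Z}$-span of $\Deg^{\mu}(\mathfrak{q})$ equals that of $\Unip^{\mu-\rho(\mathfrak{u})}(\mathcal{O}_{\mathfrak{g}})$ and the change-of-basis matrix is upper triangular with $\pm 1$'s on the diagonal.

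The main obstacle I anticipate is the control of the "lower order terms" — showing that every constituent of $I(\mathfrak{l},\mathfrak{q},W)$ with associated variety strictly inside $\overline{\mathcal{O}}_{\mathfrak{g}}$ can be expressed in the span of degenerate classes coming from strictly smaller induction data, so that the matrix really is triangular rather than merely having the expected leading terms. This requires a genuine induction on the poset of nilpotent orbits (or on $\dim\mathcal{O}$), reducing the statement for $\mathcal{O}_{\mathfrak{g}}$ to the analogous statements for all orbits in $\overline{\mathcal{O}}_{\mathfrak{g}}\setminus\mathcal{O}_{\mathfrak{g}}$, and checking that the inductive hypothesis applies — in particular that the relevant smaller orbits are again induced from Levi subalgebras inside the members of $\mathcal{Q}$ with the birationality and antidominance hypotheses inherited. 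Verifying that compatibility is the delicate combinatorial heart of the argument; the $\mathcal{D}$-module and translation-functor input, by contrast, is essentially formal once the bookkeeping is set up.
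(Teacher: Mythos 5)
Your instinct to reach for twisted $\mathcal{D}$-modules and translation functors is right, but the ``core'' you propose---analyzing composition series of $I(\mathfrak{l},\mathfrak{q},W)$ via associated-variety estimates, then controlling the ``lower order terms'' by an induction on the poset of nilpotent orbits---is not the paper's route, and I don't think it can be made to work without essentially recreating the $\mathcal{D}$-module argument from scratch. The paper never analyzes composition series of induced modules at all, and no induction on nilpotent orbits appears. Instead, the whole computation is pushed to the $\mathcal{D}$-module side and brought back by an exact functor. The linchpin is Proposition~\ref{prop:phiforunipotentlambda}: the moment map being birational (together with hypothesis (2)) forces the natural map $\phi: U(\mathfrak{g}) \to \Gamma(\mathcal{Q},\mathcal{D}_{\mathcal{Q}}^{\lambda})$ to be \emph{surjective}, with kernel exactly the unipotent ideal. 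That surjectivity is a Borho--Brylinski statement: birationality bounds the associated variety of $\coker\phi$ strictly inside $\overline{\mathcal{O}}_{\mathfrak{g}}$, and then maximality of the unipotent ideal (Proposition~\ref{prop:propsofunipotentideals}) kills the cokernel. You've mislocated the role of birationality: the ``no dimension jump'' you cite holds for any induction of nilpotent orbits and carries no information; what matters is that the generic fiber has degree one, which is what makes $\phi$ onto. Without surjectivity of $\phi$, the category of $(\mathfrak{g},\mathbf{K})$-modules annihilated by $I_{\mu-\rho(\mathfrak{u})}$ would be strictly larger than the image of $\mathcal{D}$-module localization, and the whole strategy would collapse.

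Once $\phi$ is surjective with kernel $I_{\mu-\rho(\mathfrak{u})}$, $\phi^*$ is an exact fully faithful embedding of $M(\Gamma(\mathcal{D}^{\lambda}),\mathbf{K})$-modules into $M(\mathfrak{g},\mathbf{K})$, hitting exactly the modules with annihilator $I_{\mu-\rho(\mathfrak{u})}$. The antidominance hypothesis (3) then enters via Proposition~\ref{prop:translationweaklyfair1}: it identifies $\Gamma(\mathcal{D}^{\lambda})$ with a Peirce block $T_{F_{\xi},\lambda-\rho_{\mathfrak{g}}}\Gamma(\mathcal{D}^{\lambda-\xi})$ for a suitable dominant translate $\lambda-\xi$. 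Note that the resulting translation functor is \emph{not} an equivalence (contrary to your suggestion); the point is rather that, as in Section~\ref{sec:peirce}, it is exact and admits a right adjoint taking simples to modules with unique simple quotient (Proposition~\ref{prop:uniquesimplequotient}). That is what produces the \emph{injection} $\Unip^{\mu-\rho(\mathfrak{u})}(\mathcal{O}_{\mathfrak{g}}) \hookrightarrow \Deg^{\mu}(\mathfrak{q})$---not a bijection, which is important: as the $Sp(6,\mathbb{R})$ and $Sp(10,\mathbb{R})$ tables show, some degenerates are zero and others are repeated. Finally, at the regular translate $\lambda-\xi$ one applies the Beilinson--Bernstein equivalence (Theorem~\ref{thm:bbequivalence1}) and the purely geometric upper triangularity between irreducible and standard $\mathcal{D}$-modules (Theorem~\ref{thm:bbequivalence2}), which comes from the stratification of $\mathcal{Q}$ by $\mathbf{K}$-orbits, \emph{not} from the closure order on nilpotent orbits. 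Applying the exact composite $\phi^* \circ T_{F_\xi} \circ \Gamma'$ to the $\mathcal{D}$-module identity and invoking Theorem~\ref{thm:hmswduality} (Hecht--Mili\v{c}i\'c--Schmid--Wolf/Kitchen duality, to identify $\Gamma I(Z,\tau)$ with $\pm I(\mathfrak{l}',\mathfrak{q}',\tau_{\mathfrak{q}'}^{\vee})^{\vee}$) gives the theorem directly; no secondary induction is needed. Your ``delicate combinatorial heart'' is in the paper a trivial consequence of exactness.

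One piece of your reasoning that is correct but for the wrong reason: you worry whether constituents of $I(\mathfrak{l},\mathfrak{q},W)$ with associated variety strictly inside $\overline{\mathcal{O}}_{\mathfrak{g}}$ can occur. They cannot, but not because of dimension preservation by the moment map. Rather, by Proposition~\ref{prop:propsofunipotentideals} the unipotent ideal is the unique \emph{maximal} ideal at its infinitesimal character, hence has the unique \emph{minimal} associated variety there; any hypothetical constituent with smaller AV would have a primitive annihilator with smaller AV, which is impossible. This already shows the degenerates lie in the $\mathbb{Z}$-span of the unipotents. It is the converse inclusion and the triangular shape that genuinely require the $\mathcal{D}$-module machinery, and here your sketch is too soft to certify.
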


\subsection{$\mathcal{D}$-modules on $\mathcal{Q}$}\label{sec:Dmodules}

We begin by recalling some basic facts from the theory of algebraic $\mathcal{D}$-modules. For details and proofs, see \cite{BeilinsonBernstein1981} or \cite{HechtMilicicSchmidWolf}. Let $X$ be a smooth complex algebraic variety. Let $\mathcal{O}_X$ be its structure sheaf and let $\mathcal{D}_X$ be its sheaf of (local, algebraic) differential operators. A twisted sheaf of differential operators (a TDO, for short) is any sheaf of algebras on $X$ which is locally isomorphic to $\mathcal{D}$. If $\mathcal{D}'_X$ is a TDO, we can consider the abelian category $\mathcal{M}(\mathcal{D}'_X)$ of coherent (left) $\mathcal{D}'_X$-modules. Every object $\mathcal{M} \in \mathcal{M}(\mathcal{D}'_X)$ has a good filtration (by quasi-coherent subsheaves) and $\gr(\mathcal{M})$ is a coherent sheaf on the cotangent bundle $T^*X$. The support of this sheaf is a closed, conical subset $\mathrm{CV}(\mathcal{M}) \subset T^*X$ called the characteristic variety of $\mathcal{M}$. It is independent of the filtration used to define it. If we denote the support of $\mathcal{M}$ (in the usual sense) by $\mathrm{Supp}(\mathcal{M})$, then $\mathrm{Supp}(\mathcal{M})$ is the image of $\mathrm{CV}(\mathcal{M})$ under the projection map $T^*X \to X$.

If the characteristic variety of $\mathcal{M}$ has the same dimension as $X$ (i.e. half the dimension of $T^*X$), we say that $\mathcal{M}$ is holonomic. Holonomic $\mathcal{D}'_X$-modules have finite length. They form a subcategory of $\mathcal{M}(\mathcal{D}_X')$ which we will denote by $\mathcal{M}^{\mathrm{hol}}(\mathcal{D}'_X)$. If it is the sheaf of sections of a finite-rank vector bundle and its characteristic variety is the zero section of $T^*X$. In particular, $\mathcal{M}$ is holonimic. 

Now suppose $f: Y \hookrightarrow X$ is an open or closed embedding of smooth complex algebraic varieties. The restriction $f^*\mathcal{D}'_X$ of $\mathcal{D}'_X$ to $Y$ (in the sense of quasi-coherent sheaves) is a TDO, which we will denote by $\mathcal{D}'_Y$. There are two pullbacks
$$f^*,f^!: \mathcal{M}^{\mathrm{hol}}(\mathcal{D}'_X) \to \mathcal{M}^{\mathrm{hol}}(\mathcal{D}'_Y) $$
and two pushforwards
$$f_*,f_!: \mathcal{M}^{\mathrm{hol}}(\mathcal{D}'_Y)  \to \mathcal{M}^{\mathrm{hol}}(\mathcal{D}'_X)$$
In general, $f_*$ is right adjoint to $f^*$ and $f_!$ is left adjoint to $f^!$. In particular, the functors $f_*$ and $f^!$ are left exact and the functors $f^*$ and $f_!$ are right exact. If $f$ is an open embedding, then
\begin{enumerate}
    \item $f^*=f^!$ is the usual (exact) restriction functor (of quasi-coherent sheaves).
    \item $f_*$ is the usual pushforward (of quasi-coherent sheaves)
\end{enumerate}
If $f$ is a closed embedding, then 

\begin{enumerate}[resume]
    \item $f_*=f_!$ is an (exact) embedding of categories.
    \item $f^!$ is the usual restriction functor (of quasi-coherent sheaves)
    \item If $\mathcal{M}_Y(\mathcal{D}'_X)$ is the subcategory of $\mathcal{M}(\mathcal{D}'_X)$ consisting of objects supported in $Y$, then $f_*=f_!$ defines an equivalence $\mathcal{M}(\mathcal{D}'_Y) \cong \mathcal{M}(\mathcal{D}'_X)$ with inverse given by $f^!$. This fact, due to Kashiwara (\cite{Kashiwara1983}) is somewhat deeper than the rest. 
\end{enumerate}

If $\mathbf{K}$ is an algebraic group acting (algebraically) on $X$, then $\mathcal{D}$ is a sheaf of complex algebras with $\mathbf{K}$-action. A $\mathbf{K}$-equivariant TDO is a sheaf of algebras with $\mathbf{K}$-action which is locally isomorphic to $\mathcal{D}$. If $\mathcal{D}'_X$ is such a sheaf, we can form the abelian category $\mathcal{M}(\mathcal{D}'_X,\mathbf{K})$ of $\mathbf{K}$-equivariant coherent $\mathcal{D}'_X$-modules. If $\mathbf{K}$ acts on $X$ with finitely many orbits, then objects in this category are automatically holonomic (and hence automatically finite-length). The four functors defined above have $\mathbf{K}$-equivariant analogs which satisfy ($\mathbf{K}$-equivariant analogs of) properties $(1)-(5)$.

Return now to the setting of Theorem \ref{thm:unipotentsanddegenerates}. The $\mathbf{K}$-equivariant TDOs on $\mathcal{Q}$ are parameterized by one-dimensional representations of $\mathfrak{l}$. We will write $\mathcal{D}_{\mathcal{Q}}^{\lambda}$ for the TDO corresponding to a representation $\lambda$. If $\lambda$ is the trivial representation, then $\mathcal{D}_{\mathcal{Q}}^{\lambda} = \mathcal{D}_{\mathcal{Q}}$. If $\lambda$ integrates to a character of $\mathbf{L}$, then $\mathcal{D}_{\mathcal{Q}}^{\lambda} = \mathcal{L}^{\lambda} \otimes \mathcal{D}_{\mathcal{Q}} \otimes (\mathcal{L}^{\lambda})^{-1}$, where $\mathcal{L}^{\lambda}$ is (the sheaf of sections of) the $\mathbf{G}$-equivariant line bundle with fiber $\lambda$ over $\mathfrak{q} \in \mathcal{Q}$. For a definition of $\mathcal{D}_{\mathcal{Q}}^{\lambda}$ in the general case, see e.g. \cite{HechtMilicicSchmidWolf}. 

For any $\lambda$, $\Gamma(\mathcal{Q},\mathcal{D}_{\mathcal{Q}}^{\lambda})$ acquires the structure of an algebraic $\mathbf{G}$-module. An element $X 
\in \mathfrak{g}$ determines a right-invariant vector field $\xi_X \in \Gamma(\mathcal{D}_{\mathcal{Q}}^{\lambda},\mathcal{Q})$, and the correspondence $X \mapsto \xi_X$ induces a $\mathbf{G}$-equivariant map
$$\phi: U(\mathfrak{g}) \to \Gamma(\mathcal{Q},\mathcal{D}_{\mathcal{Q}}^{\lambda})$$
which turns $\Gamma(\mathcal{Q},\mathcal{D}_{\mathcal{Q}}^{\lambda})$ into a Dixmier algebra for $\mathbf{G}$. We can consider the category $M(\Gamma(\mathcal{D}_{\mathcal{Q}}^{\lambda}),\mathbf{K})$ of $\mathbf{K}$-equivariant modules for the Dixmier algebra $\Gamma(\mathcal{Q},\mathcal{D}_{\mathcal{Q}}^{\lambda})$ (cf Definition \ref{def:dixmiermodule}). Taking global sections defines a left-exact functor
\begin{equation}\label{eqn:globalsections}
\Gamma': \mathcal{M}(\mathcal{D}_{\mathcal{Q}}^{\lambda},\mathbf{K}) \to M(\Gamma(\mathcal{Q},\mathcal{D}_{\mathcal{Q}}^{\lambda}),\mathbf{K})\end{equation}
Pulling back along $\phi$ defines an exact functor
\begin{equation}\label{eqn:pullbackfunctor}
\phi^* :M(\Gamma(\mathcal{Q},\mathcal{D}_{\mathcal{Q}}^{\lambda}),\mathbf{K}) \to M(\mathfrak{g},\mathbf{K})
\end{equation}
We reserve the symbol $\Gamma$ for the composition $\phi^* \circ \Gamma'$. A module in the image of this functor has infinitesimal character $\gamma_{\lambda - \rho_{\mathfrak{g}}}$. When $\lambda- \rho_{\mathfrak{g}}$ is strictly antidominant, the functor $\Gamma'$ is particularly well-behaved. The following theorem was proved in the special case $\mathfrak{q}=\mathfrak{b}$ by Beilinson and Bernstein (\cite{BeilinsonBernstein1981}) and in the general case by Kitchen (\cite{Kitchen2011})

\begin{theorem}[\cite{BeilinsonBernstein1981},\cite{Kitchen2011}]
Suppose $\lambda -\rho_{\mathfrak{g}}$ is strictly antidominant, i.e.
\begin{equation}\label{eqn:weaklyantidominant}
\langle \lambda -\rho_{\mathfrak{g}}, \alpha^{\vee}\rangle < 0 \qquad \forall \alpha \in \Delta^+(\mathfrak{g},\mathfrak{h})\end{equation}
Then the functor
$$\Gamma': \mathcal{M}(\mathcal{D}_{\mathcal{Q}}^{\lambda},\mathbf{K}) \to M(\Gamma(\mathcal{Q},\mathcal{D}_{\mathcal{Q}}^{\lambda}),\mathbf{K})$$
is an exact equivalence of categories.
\end{theorem}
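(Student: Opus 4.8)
The plan is to run the Beilinson--Bernstein localization argument on the partial flag variety $\mathcal{Q}$, carried through $\mathbf{K}$-equivariantly as in Kitchen's work. Write $B := \Gamma(\mathcal{Q},\mathcal{D}_{\mathcal{Q}}^{\lambda})$. The candidate quasi-inverse is the localization functor
$$\Delta' : M(B,\mathbf{K}) \to \mathcal{M}(\mathcal{D}_{\mathcal{Q}}^{\lambda},\mathbf{K}), \qquad \Delta'(V) := \mathcal{D}_{\mathcal{Q}}^{\lambda} \otimes_{B} V,$$
which is right exact, left adjoint to $\Gamma'$, and preserves $\mathbf{K}$-equivariance because $\mathcal{D}_{\mathcal{Q}}^{\lambda}$ is a $\mathbf{K}$-equivariant sheaf of algebras; that $\Gamma'$ of a $\mathbf{K}$-equivariant $\mathcal{D}_{\mathcal{Q}}^{\lambda}$-module is a $(B,\mathbf{K})$-module in the sense of Definition \ref{def:dixmiermodule} is immediate from $\mathbf{K}$-equivariance of the global-sections pairing together with the fact that $\phi\colon U(\mathfrak{g}) \to B$ restricts on $\mathfrak{k}$ to the differential of the $\mathbf{K}$-action. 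The whole statement then reduces to two assertions expressing that $\mathcal{Q}$ is $\mathcal{A}$-affine for $\mathcal{D}_{\mathcal{Q}}^{\lambda}$: \textbf{(i)} $\Gamma'$ is exact, and \textbf{(ii)} $\Gamma'(\mathcal{M})=0$ implies $\mathcal{M}=0$. Granting these, the proof concludes by a formal adjunction argument: since $\Gamma'$ commutes with direct sums and is exact, applying $\Gamma'\circ\Delta'$ to a presentation of $V$ by free $B$-modules identifies the unit $V \to \Gamma'\Delta'(V)$ as an isomorphism; and applying the exact functor $\Gamma'$ to the kernel and cokernel of the counit $\Delta'\Gamma'(\mathcal{M}) \to \mathcal{M}$---which $\Gamma'$ carries to an isomorphism by the triangle identities---shows, via (ii), that the counit is also an isomorphism.

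The content is \textbf{(i)}, and this is exactly where strict antidominance of $\lambda-\rho_{\mathfrak{g}}$ is used. It suffices to prove $H^i(\mathcal{Q},\mathcal{M})=0$ for $i>0$ and every coherent $\mathcal{D}_{\mathcal{Q}}^{\lambda}$-module $\mathcal{M}$. I would choose a good filtration of $\mathcal{M}$; since $\operatorname{gr}\mathcal{D}_{\mathcal{Q}}^{\lambda}$ is the pushforward to $\mathcal{Q}$ of $\mathcal{O}_{T^{*}\mathcal{Q}}$---independently of $\lambda$, which is why the non-integral case requires no separate treatment---the associated graded $\operatorname{gr}\mathcal{M}$ is a coherent sheaf on $T^{*}\mathcal{Q}$, and $H^{\bullet}(\mathcal{Q},\mathcal{M})$ is filtered by subquotients of $H^{\bullet}(T^{*}\mathcal{Q},\operatorname{gr}\mathcal{M})$. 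The moment map $T^{*}\mathcal{Q} \to \mathfrak{g}^{*} \cong \mathfrak{g}$ is proper with affine image (the closure of a nilpotent orbit), which together with Serre vanishing on the projective variety $\mathcal{Q}$ for a sufficiently ample twist $\mathcal{L}^{\nu}$ lets one run the Beilinson--Bernstein shifting (Casimir) argument: strict antidominance of $\lambda-\rho_{\mathfrak{g}}$ is precisely the positivity needed to propagate the vanishing from the twisted weight $\lambda+\nu$ back down to $\lambda$.

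For \textbf{(ii)}, I would use that $\mathbf{K}$ acts on $\mathcal{Q}$ with finitely many orbits, so every nonzero object of $\mathcal{M}(\mathcal{D}_{\mathcal{Q}}^{\lambda},\mathbf{K})$ is holonomic and, after passing to a $\mathbf{K}$-stable open subvariety on which an orbit $\mathcal{S}$ is closed and to a nonzero quotient supported on $\mathcal{S}$, is identified by Kashiwara's theorem with the $\mathcal{D}$-module pushforward of a nonzero $\mathbf{K}$-equivariant connection on $\mathcal{S}$. Such a pushforward has nonzero global sections after the strictly antidominant twist---again by reusing the positivity input from (i), e.g.\ on an affine chart meeting $\mathcal{S}$---and exactness of $\Gamma'$ then propagates this nonvanishing back to the original module, giving (ii). Since $\Delta'$, $\Gamma'$, and the unit and counit are all manifestly $\mathbf{K}$-compatible, the equivalence produced is an equivalence of equivariant categories, as asserted. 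The main obstacle is step (i): unlike the classical case of an integral weight on the full flag variety, where one reduces directly to Serre vanishing for one ample line bundle, here one must carry the full graded positivity argument on $T^{*}\mathcal{Q}$ through for an arbitrary---possibly singular, possibly non-integral---strictly antidominant $\lambda$ on a partial flag variety, while keeping it compatible with $\mathbf{K}$-equivariance; this is exactly the content of Kitchen's extension of Beilinson--Bernstein, and I would follow that argument.
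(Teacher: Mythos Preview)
The paper does not prove this theorem at all: it is stated with attribution to \cite{BeilinsonBernstein1981} (for the full flag variety) and \cite{Kitchen2011} (for partial flag varieties) and then used as a black box. Your proposal therefore goes beyond the paper by sketching the actual argument from those references; the overall shape---localization as a left adjoint, reduction to exactness plus faithfulness of $\Gamma'$, and the tensor/Casimir shifting trick under strict antidominance---is the correct one and is indeed how Beilinson--Bernstein and Kitchen proceed.

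Two places where your sketch is looser than the cited proofs. First, in step (i) you blend two distinct mechanisms: the moment-map/properness picture on $T^*\mathcal{Q}$ and the Beilinson--Bernstein shifting argument via tensoring with finite-dimensional $\mathbf{G}$-representations. The actual proofs use the latter exclusively; the filtration of $F_\nu \otimes \mathcal{M}$ by $\mathcal{D}_{\mathcal{Q}}^{\lambda+\mu}$-modules (for $\mu$ a weight of $F_\nu$) together with the Casimir eigenvalue computation is what transports vanishing from the ample twist back to $\lambda$, and strict antidominance is exactly the hypothesis that makes the relevant eigenvalues distinct. Second, your argument for (ii) via orbit stratification and Kashiwara is more circuitous than needed and, as written, has a small gap: ``passing to a $\mathbf{K}$-stable open subvariety'' is not a morphism of sheaves on $\mathcal{Q}$, so exactness of $\Gamma'$ on $\mathcal{Q}$ does not directly let you ``propagate nonvanishing back.'' The standard route (and Kitchen's) avoids this entirely by proving the stronger statement that every coherent $\mathcal{D}_{\mathcal{Q}}^{\lambda}$-module is generated by its global sections, using the same tensor-product trick as in (i); faithfulness is then immediate. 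None of this affects the paper, which simply quotes the result.
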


The work of Beilinson and Bernstein also leads to a classification of the irreducible objects in $M(\mathcal{D}_{\mathcal{Q}}^{\lambda},\mathbf{K})$ (without restrictions on $\lambda$). 

A \emph{standard datum} for ($\mathfrak{q}$, $\lambda$) is a pair $(Z, \tau)$ consisting of a $\mathbf{K}$-orbit $Z \subset \mathcal{Q}$ and a $\mathbf{K}$-equivariant line bundle $\tau$ on $Z$ which is suitably compatible with $\lambda$. To describe this compatibility, choose a point $\mathfrak{q}' \in Z$. The fiber of $\tau$ over $\mathfrak{q}'$ is a character $\tau_{\mathfrak{q}'}$ of $\mathbf{Q}'\cap \mathbf{K}$. To be compatible with $\lambda$, the differential of this character should coincide with the character of $\mathfrak{l}'$ determined by $\lambda$. This condition on $d\tau_{\mathfrak{q}'}$ is precisely what is required to define on $\tau$ the structure of a $\mathbf{K}$-equivariant $\mathcal{D}^{\lambda}_Z$-module.

If $\mathcal{M}$ is an irreducible $(\mathcal{M}^{\lambda}_{\mathcal{Q}},\mathbf{K})$-module, then $\mathrm{Supp}(\mathcal{M})$ is an irreducible subset of $\mathcal{Q}$ and, consequently, the closure in $\mathcal{Q}$ of a unique $\mathbf{K}$-orbit $Z$. Let $\mathcal{Q}' = \mathcal{Q} \setminus \partial Z$. Write
$$j: \mathcal{Q}' \hookrightarrow \mathcal{Q} \qquad i: Z \hookrightarrow \mathcal{Q}'$$
for the open and closed embeddings and let $k = j \circ i$. By Kashiwara's equivalence, there is a unique $(\mathcal{D}^{\lambda}_Z,\mathbf{K})$-module $\tau = k^!\mathcal{M}$ such that $i_!\tau = j^!\mathcal{M}$. On the other hand, $k_!\tau$ contains a unique irreducible $(\mathcal{D}_{\mathcal{Q}}^{\lambda},\mathbf{K})$-submodule, which is isomorphic to $\mathcal{M}$. This proves

\begin{theorem}\label{thm:bbequivalence1}
The map
$$\mathcal{M} \mapsto (Z,\tau)$$
characterized by the requirements $\mathrm{Supp}(\mathcal{M}) = \overline{Z}$ and $\tau = k^!\mathcal{M}$ defines a bijective correspondence between irreducible $(\mathcal{D}_{\mathcal{Q}}^{\lambda},\mathbf{K})$-modules and standard data for ($\mathfrak{q}$, $\lambda$). The inverse map takes the standard datum $(Z,\tau)$ to the unique irreducible $(\mathcal{D}_{\mathcal{Q}}^{\lambda},\mathbf{K})$-submodule of $k_!\tau$.
\end{theorem}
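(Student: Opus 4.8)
The plan is to run the Beilinson--Bernstein classification scheme for equivariant twisted $\mathcal{D}$-modules, localised along a single $\mathbf{K}$-orbit by means of Kashiwara's equivalence. All modules in play are $\mathbf{K}$-equivariant coherent $\mathcal{D}_{\mathcal{Q}}^{\lambda}$-modules; since $\mathbf{K}$ acts on $\mathcal{Q}$ with finitely many orbits they are holonomic and of finite length, so their supports, socles and composition series are well behaved, and I will use freely the equivariant analogues of properties $(1)$--$(5)$ and of Kashiwara's theorem. \emph{Step 1: the support is an orbit closure.} If $\mathcal{M}$ is irreducible, then $\mathrm{Supp}(\mathcal{M})$ is closed and $\mathbf{K}$-invariant, and it is irreducible, since otherwise the sections of $\mathcal{M}$ supported on one irreducible component would constitute a proper nonzero $\mathcal{D}_{\mathcal{Q}}^{\lambda}$-submodule. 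As $\mathbf{K}$ has finitely many orbits, there is a unique $\mathbf{K}$-orbit $Z$ with $\mathrm{Supp}(\mathcal{M}) = \overline{Z}$, namely the dense orbit of $\mathrm{Supp}(\mathcal{M})$.

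\emph{Step 2: restriction to the orbit.} Put $\partial Z = \overline{Z}\setminus Z$, a closed $\mathbf{K}$-invariant set, and $\mathcal{Q}' = \mathcal{Q}\setminus\partial Z$, so that $Z$ is closed in the $\mathbf{K}$-stable open set $\mathcal{Q}'$; let $j$, $i$, $k = j\circ i$ be as in the text. Then $j^{!}\mathcal{M} = j^{*}\mathcal{M}$ is a $(\mathcal{D}_{\mathcal{Q}'}^{\lambda},\mathbf{K})$-module with support $\overline{Z}\cap\mathcal{Q}' = Z$, and it is in fact irreducible: the unit $\mathcal{M}\to j_{*}j^{*}\mathcal{M}$ is injective, since its kernel is a submodule of $\mathcal{M}$ supported in $\partial Z\subsetneq\mathrm{Supp}(\mathcal{M})$ hence zero, and combined with the exactness of $j^{*}$ this forces every nonzero submodule of $j^{*}\mathcal{M}$ to be all of it. By the equivariant form of Kashiwara's equivalence for the closed embedding $i$, the module $\tau := k^{!}\mathcal{M} = i^{!}j^{*}\mathcal{M}$ is therefore an irreducible $(\mathcal{D}_{Z}^{\lambda},\mathbf{K})$-module, uniquely characterised by $i_{!}\tau = j^{!}\mathcal{M}$.

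\emph{Step 3: irreducibles on the orbit, and the inverse map.} Because $Z\cong\mathbf{K}/(\mathbf{Q}'\cap\mathbf{K})$ is a single orbit, the characteristic variety of any $(\mathcal{D}_{Z}^{\lambda},\mathbf{K})$-module is a conical $\mathbf{K}$-stable subset of $T^{*}Z$ and hence lies in the zero section; such a module is then $\mathcal{O}_{Z}$-coherent, i.e. the sheaf of sections of a $\mathbf{K}$-equivariant vector bundle, and irreducibility identifies it with a $\mathbf{K}$-equivariant line bundle, the $\mathcal{D}_{Z}^{\lambda}$-structure amounting precisely to the compatibility of the differential of the isotropy character with $\lambda$. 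Thus the irreducible $(\mathcal{D}_{Z}^{\lambda},\mathbf{K})$-modules are exactly the standard data with support $\overline{Z}$, so Steps $1$--$2$ assign to $\mathcal{M}$ a genuine standard datum $(Z,\tau)$. Conversely, given a standard datum $(Z,\tau)$, Kashiwara's equivalence makes $i_{!}\tau$ an irreducible $(\mathcal{D}_{\mathcal{Q}'}^{\lambda},\mathbf{K})$-module, and $k_{!}\tau$ has a unique irreducible submodule $\mathcal{M}$, the minimal extension of $\tau$: any irreducible submodule restricts along $j$ to a submodule of $i_{!}\tau$, hence to $0$ or to $i_{!}\tau$; the first is impossible because $k_{!}\tau = j_{*}(i_{!}\tau)$ has no submodule supported on $\partial Z$ (such a submodule would have vanishing $j$-restriction, forcing it to be zero by the $j^{*}$-$j_{*}$ adjunction), and two distinct copies of the second would embed $i_{!}\tau\oplus i_{!}\tau$ into $i_{!}\tau$, which is absurd. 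This $\mathcal{M}$ has support $\overline{Z}$ and satisfies $k^{!}\mathcal{M} = i^{!}j^{*}\mathcal{M} = i^{!}i_{!}\tau = \tau$.

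\emph{Step 4: the maps are mutually inverse; the main obstacle.} Starting again from an irreducible $\mathcal{M}$ with associated datum $(Z,\tau)$, the isomorphism $j^{*}\mathcal{M}\cong i_{!}\tau$ together with the injection $\mathcal{M}\hookrightarrow j_{*}j^{*}\mathcal{M} = k_{!}\tau$ of Step 2 exhibits $\mathcal{M}$ as the unique irreducible submodule of $k_{!}\tau$, that is, as the module produced from $(Z,\tau)$ by the inverse construction; combined with the computation of $k^{!}\mathcal{M}$ in Step 3, this shows the two assignments are mutually inverse, and in particular both bijective. The step I expect to require the most care is Step 3 --- the identification of the irreducible $\mathbf{K}$-equivariant $\mathcal{D}_{Z}^{\lambda}$-modules on the orbit $Z$ with the line bundles $\tau$ compatible with $\lambda$, where the geometry of $\mathcal{Q}$, the precise meaning of the TDO parameter $\lambda$, and the compatibility condition on the fibre character of the isotropy group all come together --- together with the routine but not entirely automatic task of carrying out Kashiwara's equivalence and the socle arguments in the $\mathbf{K}$-equivariant category rather than merely for $\mathcal{D}$-modules.
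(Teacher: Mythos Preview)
Your argument is correct and follows exactly the approach the paper sketches in the paragraph immediately preceding the theorem: irreducibility of the support, Kashiwara's equivalence on the open set $\mathcal{Q}'$, and identification of $\mathcal{M}$ with the socle of the standard module. The only wrinkle is notational---under the paper's stated adjunction conventions one has $k_! = j_!\,i_!$, not $j_*(i_!-)$, so your socle argument via the $(j^*,j_*)$-adjunction strictly pertains to $k_*\tau = j_*(i_!\tau)$; but the paper's own phrase ``submodule of $k_!\tau$'' is equally loose on this point, so no harm is done.
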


To elaborate on the relationship between irreducibles in $\mathcal{M}(\mathcal{D}_{\mathcal{Q}}^{\lambda},\mathbf{K})$ and standard data for $(\mathfrak{q},\lambda)$, we will pass to the Grothendieck group $K\mathcal{M}(\mathcal{D}_{\mathcal{Q}}^{\lambda},\mathbf{K})$. For every $p \geq 0$, we can define the left-derived functor
$$L^pk_!: \mathcal{M}(\mathcal{D}_Z^{\lambda},\mathbf{K}) \to \mathcal{M}(\mathcal{D}_{\mathcal{Q}}^{\lambda},\mathbf{K}) $$
Furthermore, $L^pk_!\tau = 0$ for $p >>0$. Hence, we can define
$$I(Z,\tau) := \sum_p (-1)^p [L^pk_!\tau] \in K\mathcal{M}(\mathcal{D}_{\mathcal{Q}}^{\lambda},\mathbf{K})$$
We will call this class the standard $(\mathcal{D}_{\mathcal{Q}}^{\lambda},\mathbf{K})$-module associated to $(Z,\tau)$. 

\begin{theorem}\label{thm:bbequivalence2}
The irreducible and standard $(\mathcal{D}_{\mathcal{Q}}^{\lambda},\mathbf{K})$-modules are related in $K\mathcal{M}(\mathcal{D}_{\mathcal{Q}}^{\lambda},\mathbf{K})$ by an upper triangular matrix with $\pm 1$'s along the diagonal. More precisely:

\begin{enumerate}
    \item Both sets form $\mathbb{Z}$-bases for $K\mathcal{M}(\mathcal{D}_{\mathcal{Q}}^{\lambda},\mathbf{K})$
    \item There is an ordering of the irreducible modules $\mathcal{M}_1,...,\mathcal{M}_n$ so that
    \begin{equation}\label{eqn:uppertriangularity}[\mathcal{M}_p] = \pm I(Z_p,\tau_p) + \sum_{q >p} c_{pq} I(Z_q,\tau_q) \qquad c_{pq} \in \mathbb{Z}\end{equation}
    where $(Z_p,\tau_p)$ is the standard datum associated to $\mathcal{M}_p$ under the bijection described in Theorem \ref{thm:bbequivalence1}.
\end{enumerate}
\end{theorem}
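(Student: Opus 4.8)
\subsection*{Proof of Theorem \ref{thm:bbequivalence2}: plan}

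The plan is to deduce both assertions from the \emph{clean restriction} behaviour of the functors $L^pk_!$ to the open complement of a boundary, combined with the classification in Theorem \ref{thm:bbequivalence1} and the fact (recorded above) that every object of $\mathcal{M}(\mathcal{D}_{\mathcal{Q}}^{\lambda},\mathbf{K})$ has finite length, since $\mathbf{K}$ has finitely many orbits on $\mathcal{Q}$. The finite-length property alone gives that the classes $[\mathcal{M}_1],\dots,[\mathcal{M}_n]$ of the irreducibles form a $\mathbb{Z}$-basis of $K\mathcal{M}(\mathcal{D}_{\mathcal{Q}}^{\lambda},\mathbf{K})$; once \eqref{eqn:uppertriangularity} is proved, the matrix relating the $I(Z_p,\tau_p)$ to the $[\mathcal{M}_p]$ is integral, upper triangular and has diagonal entries $\pm 1$, hence is invertible over $\mathbb{Z}$, so the standard classes form a $\mathbb{Z}$-basis as well. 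Thus part (1) follows from part (2), and the theorem reduces to establishing \eqref{eqn:uppertriangularity} for a suitable ordering.

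Fix a numbering $(Z_1,\tau_1),\dots,(Z_n,\tau_n)$ of the standard data for $(\mathfrak{q},\lambda)$ with $\dim Z_p \geq \dim Z_q$ whenever $p \leq q$ (the order among data living on a common orbit is immaterial), and let $\mathcal{M}_p$ be the irreducible attached to $(Z_p,\tau_p)$ by Theorem \ref{thm:bbequivalence1}. The key claim is that for each $p$,
\begin{equation*}
I(Z_p,\tau_p) = \pm [\mathcal{M}_p] + \left(\text{a } \mathbb{Z}\text{-combination of classes } [\mathcal{M}_a] \text{ with } Z_a \subseteq \partial Z_p\right).
\end{equation*}
To prove this, keep the notation $\mathcal{Q}' = \mathcal{Q}\setminus\partial Z_p$, $j\colon \mathcal{Q}'\hookrightarrow\mathcal{Q}$, $i\colon Z_p\hookrightarrow\mathcal{Q}'$, $k=j\circ i$ from the construction preceding Theorem \ref{thm:bbequivalence1}. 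Since $i$ is a closed embedding, $i_!$ is exact, so the bounded complex computing $L^\bullet k_!\tau_p$ is $Lj_!(i_!\tau_p)$. Applying the exact functor $j^*$ and using the identity $j^*\circ Lj_! \cong \mathrm{id}$ for the open embedding $j$, one finds $j^* L^0k_!\tau_p = i_!\tau_p$ and $j^* L^pk_!\tau_p = 0$ for $p>0$. Hence, in $K\mathcal{M}(\mathcal{D}_{\mathcal{Q}'}^{\lambda},\mathbf{K})$,
\begin{equation*}
j^* I(Z_p,\tau_p) = [i_!\tau_p] = j^*[\mathcal{M}_p],
\end{equation*}
the last equality because $j^!\mathcal{M}_p = i_!\tau_p$ and $j^!=j^*$ for open embeddings, which is exactly how $\tau_p$ was produced from $\mathcal{M}_p$. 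Therefore $I(Z_p,\tau_p)-[\mathcal{M}_p]$ lies in the kernel of $j^*$ on Grothendieck groups. But $j^*$ sends $[\mathcal{M}_a]$ to the class of an irreducible object of $\mathcal{M}(\mathcal{D}_{\mathcal{Q}'}^{\lambda},\mathbf{K})$ when $Z_a\subseteq\mathcal{Q}'$ — and these classes exhaust, each exactly once, the irreducibles on $\mathcal{Q}'$ (Theorem \ref{thm:bbequivalence1} for $\mathcal{Q}'$, compatibly with restriction) — while $j^*[\mathcal{M}_a]=0$ when $Z_a\subseteq\partial Z_p$. Hence $\ker(j^*) = \bigoplus_{Z_a\subseteq\partial Z_p}\mathbb{Z}[\mathcal{M}_a]$, and the claim follows (indeed with leading sign $+1$).

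It remains to assemble the claim into \eqref{eqn:uppertriangularity}. If $Z_a\subseteq\partial Z_p$ then $\dim Z_a<\dim Z_p$, so $a>p$ in our ordering; thus the claim says the matrix expressing the $I(Z_p,\tau_p)$ in the basis $\{[\mathcal{M}_q]\}$ is upper triangular with diagonal entries $\pm 1$. Inverting such a matrix produces another of the same shape, which is exactly \eqref{eqn:uppertriangularity}: with the ordering just fixed, $[\mathcal{M}_p] = \pm I(Z_p,\tau_p) + \sum_{q>p} c_{pq}I(Z_q,\tau_q)$ with $c_{pq}\in\mathbb{Z}$. The only step requiring genuine care is the compatibility $j^*L^0k_!\tau_p = i_!\tau_p$, $j^*L^pk_!\tau_p=0$ ($p>0$) of the derived functors $L^pk_!$ with restriction to the complement of $\partial Z_p$; given Kashiwara's equivalence and the vanishing $L^pk_!\tau_p=0$ for $p\gg 0$ noted above, this is a routine verification, and it is essentially the only input beyond Theorem \ref{thm:bbequivalence1}.
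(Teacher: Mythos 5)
Your proof is correct and is in the end the same restriction-to-the-open-complement argument the paper uses, but the bookkeeping is arranged differently in a way that is worth noting. The paper orders the standard data by decreasing dimension, treats closed orbits as the base case via Kashiwara's equivalence, and then runs a descending induction: applying Euler characteristics to the distinguished triangle $k_!k^!\mathcal{M}_p \to \mathcal{M}_p \to l_*l^*\mathcal{M}_p$ gives $[\mathcal{M}_p] = I(Z_p,\tau_p) + \chi\, l_*l^*\mathcal{M}_p$, the second term is killed by $k^*$ and so lives in $K\mathcal{M}_{\partial Z_p}$, and the inductive hypothesis is invoked to rewrite it as a $\mathbb{Z}$-combination of $I(Z_q,\tau_q)$ with $q>p$. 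You instead establish, for every $p$ at once and with no induction, that $I(Z_p,\tau_p) - [\mathcal{M}_p]$ is killed by the exact functor $j^*$, identify $\ker(j^*)$ on $K$-groups with the span of irreducibles supported on $\partial Z_p$ (this is the one step you gloss over: it uses that $j^*$ sends irreducibles to irreducibles or zero and is a bijection onto irreducibles on $\mathcal{Q}'$, which is the content of Theorem \ref{thm:bbequivalence1} applied over $\mathcal{Q}'$ together with uniqueness of intermediate extensions), and conclude that the matrix expressing the standards in the irreducible basis is upper unitriangular. Inverting then gives \eqref{eqn:uppertriangularity}. Your version dodges the mild circularity in the paper's inductive step (where the standards of lower dimension must already be known to span $K\mathcal{M}_{\partial Z_p}$) and makes the base case unnecessary; the paper's version is more geometric in flavor and makes the role of Kashiwara's theorem and the distinguished triangle more visible. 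As a small bonus, both arguments actually show the diagonal entries are $+1$, sharper than the $\pm 1$ asserted in the statement.
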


\begin{proof}
Choose an ordering $(Z_1,\tau_1), ..., (Z_n,\tau_n)$ of the standard data for $(\mathfrak{q},\lambda)$ so that
$$(Z_k,\tau_k) \leq (Z_l,\tau_l) \Leftrightarrow \dim(Z_k) \geq \dim(Z_l)$$
and induce an ordering $\mathcal{M}_1,...,\mathcal{M}_n$ on the irreducible objects using the bijection of Theorem \ref{thm:bbequivalence1}. 

Since $\mathbf{K}$ acts on $\mathcal{Q}$ with finitely many orbits, the $\mathbf{K}$-orbits of minimal dimension are necessarily closed. If $(Z_p,\tau_p)$ is a standard datum and $Z_p$ is closed, then by Kashiwara's theorem $I(Z_p,\tau_p) = [k_!\tau_p]$ is the class of an irreducible $(\mathcal{D}_{\mathcal{Q}}^{\lambda},\mathbf{K})$-module. In particular
$$[\mathcal{M}_p] = I(Z_p,\tau_p)$$
as desired.

Now let $(Z_p,\tau_p)$ be a standard datum for $(\mathfrak{q},\lambda)$ with the property that Equation \ref{eqn:uppertriangularity} holds for every $(Z_q,\tau_q)$ with $\dim(Z_q) < \dim(Z_p)$. We will show that Equation \ref{eqn:uppertriangularity} holds for $(Z_p,\tau_p)$. By Kashiwara's theorem, we can assume that $Z_p$ is an open subset of $\mathcal{Q}$. Write $l: \partial Z \hookrightarrow \mathcal{Q}$ for the closed embedding (recall that $i=j=k: Z \hookrightarrow \mathcal{Q}$ is the open embedding).

There is a distinguished triangle 
$$k_!k^!\mathcal{M}_p \to \mathcal{M}_p \to l_*l^*\mathcal{M}_p$$
in the bounded derived category $D^b\mathcal{M}(\mathcal{D}_{\mathcal{Q}}^{\lambda},\mathbf{K})$.

Taking Euler characteristics, we obtain an equation in $K\mathcal{M}(\mathcal{D}_{\mathcal{Q}}^{\lambda},\mathbf{K})$
$$[\mathcal{M}_p] = \chi k_!k^! \mathcal{M}_p + \chi l_*l^*\mathcal{M}_p$$
By definition, $\chi k_!k^!\mathcal{M}_p = I(Z_p,\tau_p)$. Furthermore
$$k^*l_*l^*\mathcal{M}_k = k^!l_*l^*\mathcal{M}_k = 0$$
Hence, $\chi l_*l^*\mathcal{M}_p$ is contained in $K\mathcal{M}_{\partial Z_p}(\mathcal{D}_{\mathcal{Q}}^{\lambda},\mathbf{K})$, and is therefore a linear combination of classes $I(Z_q,\tau_q)$ with $q > p$. This proves $(2)$. $(1)$ follows immediately. 
\end{proof}

Even when $\Gamma'$ fails to be exact, there is a well-defined class
$$\Gamma' I(Z,\tau):= \sum_p (-1)^p [\Gamma L^pk_!\tau]   \in KM^{\mathrm{fl}}(\Gamma(\mathcal{Q},\mathcal{D}_{\mathcal{Q}}^{\lambda}),\mathbf{K})$$

Up to a natural duality, this is exactly the class defined in Section \ref{sec:parabolicinduction}.

\begin{theorem}\label{thm:hmswduality}
Let $(Z,\tau)$ be a standard datum for $(\mathfrak{q},\lambda)$. Choose a parabolic subalgebra $\mathfrak{q}' \in Z$ and a Levi decomposition $\mathfrak{q}' = \mathfrak{l}' \oplus \mathfrak{u}'$. There is an equality in $KM^{\mathrm{fl}}(\Gamma(\mathcal{Q},\mathcal{D}_{\mathcal{Q}}^{\lambda}),\mathbf{K})$
$$\Gamma I(Z,\tau) = \pm I(\mathfrak{l}',\mathfrak{q}', \tau_{\mathfrak{q}'}^{\vee})^{\vee}$$
\end{theorem}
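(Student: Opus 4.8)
> The plan is to identify the left-hand side $\Gamma I(Z,\tau)$ with a parabolic induction from the Levi $\mathbf{L}'$ at a point $\mathfrak{q}' \in Z$, and then transport everything through the contragredient functor to match the definition of $I(\mathfrak{l}',\mathfrak{q}',-)$ given in Section \ref{sec:parabolicinduction}. The starting point is the geometry of the $\mathbf{K}$-orbit $Z \subset \mathcal{Q}$: if $\mathfrak{q}' \in Z$, then $Z \cong \mathbf{K}/(\mathbf{Q}'\cap\mathbf{K})$, and the data of a $\mathbf{K}$-equivariant line bundle $\tau$ on $Z$ compatible with $\lambda$ is precisely the data of a one-dimensional $(\mathfrak{l}', \frac{\mathbf{Q}'\cap\mathbf{K}}{\mathbf{U}'\cap\mathbf{K}})$-module $\tau_{\mathfrak{q}'}$ whose infinitesimal character matches $\lambda$ in the appropriate sense. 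Under Kashiwara's equivalence, the $\mathcal{D}^{\lambda}_Z$-module attached to $\tau$ corresponds to $\tau$ itself as a line bundle, and the standard class $I(Z,\tau) = \sum_p (-1)^p [L^p k_! \tau]$ is built by applying the (derived) $\mathcal{D}$-module pushforward along $k: Z \hookrightarrow \mathcal{Q}$.

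First I would unwind the composite $\Gamma = \phi^* \circ \Gamma'$ applied to $L^p k_! \tau$. The key geometric input is the classical computation (see \cite{HechtMilicicSchmidWolf}) identifying the derived global sections of the $\mathcal{D}^{\lambda}_{\mathcal{Q}}$-module pushforward of a line bundle from a $\mathbf{K}$-orbit with a derived functor of a Zuckerman-type (co)induction from the isotropy subpair $(\mathfrak{q}', \mathbf{Q}'\cap\mathbf{K})$, after the usual $\rho$-shift. Concretely, the orbit $Z$ is an affinely embedded locally closed subvariety of $\mathcal{Q}$, the pushforward $k_!$ along the affine immersion $k$ has bounded derived global sections, and the standard Beilinson--Bernstein localization dictionary (in the form proved by Kitchen in the non-Borel case) converts $\Gamma' L^p k_! \tau$ into $R^p$ of the cohomological/parabolic induction functor $\mathbf{I}^{(\mathfrak{g},\mathbf{K})}_{(\mathfrak{q}',\mathbf{Q}'\cap\mathbf{K})}$ applied to $\tau_{\mathfrak{q}'}$ twisted by $\det(\mathfrak{u}')$ — the determinant twist being exactly the bookkeeping that appears in the definition of $\mathbf{I}$ in stage (1) of Section \ref{sec:parabolicinduction}. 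Summing the signs, $\Gamma I(Z,\tau) = \pm I(\mathfrak{l}', \mathfrak{q}', \tau_{\mathfrak{q}'})$ up to sign and up to the precise form of the twist. The $\pm$ absorbs the discrepancy between the homological degree conventions for $L^p k_!$ and $R^i \mathbf{I}$ (the difference $s - p$ shift from Theorem \ref{thm:propertiesofI}(3)); since we are in the Grothendieck group this is harmless.

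Next I would insert the contragredient. The functor $\mathbf{I}$ as defined is a $\mathrm{Hom}_{\mathfrak{q}}(U(\mathfrak{g}), -)$-type (co)induction, which is naturally dual to the $U(\mathfrak{g}) \otimes_{U(\mathfrak{q})} -$ (pro/tensor) induction; the $\mathcal{D}$-module side produces the latter when one uses $k_!$ rather than $k_*$, so passing from one to the other is exactly the Hecht--Mili\v{c}i\'c--Schmid--Wolf duality theorem (whence the name of the statement). Thus $\Gamma I(Z,\tau) = \pm \big(I(\mathfrak{l}',\mathfrak{q}', \tau_{\mathfrak{q}'}^{\vee})\big)^{\vee}$: the contragredient on $\tau_{\mathfrak{q}'}$ and the outer contragredient together account for the switch between $k_!$ and $k_*$. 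One must also check independence of the choice of $\mathfrak{q}' \in Z$ and of the Levi $\mathfrak{l}' \subset \mathfrak{q}'$: this follows because any two choices are $\mathbf{K}$-conjugate (for the point) respectively $\mathbf{L}'$-conjugate (for the Levi), and the induced class $I(\mathfrak{l}',\mathfrak{q}',-)$ is visibly invariant under such conjugation, as already noted in the discussion preceding Definition \ref{def:degenerate}.

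The main obstacle is the bookkeeping in the second paragraph: pinning down exactly which $\rho$-shift and which determinant/orientation twist intervene so that the TDO parameter $\lambda$ (a weight of $\mathfrak{l}$) matches the infinitesimal-character normalization $\gamma_{\lambda - \rho_{\mathfrak{g}}}$ used for $\Gamma$, and so that the line bundle $\tau_{\mathfrak{q}'}$ on the $\mathbf{K}$-orbit matches the $(\mathfrak{l}', \frac{\mathbf{Q}'\cap\mathbf{K}}{\mathbf{U}'\cap\mathbf{K}})$-module fed into $I(\mathfrak{l}',\mathfrak{q}',-)$. The conceptual content — that $\mathcal{D}$-module pushforward from a $\mathbf{K}$-orbit, followed by global sections, equals derived parabolic induction from the isotropy subpair, up to duality — is entirely standard and is precisely the content of the localization theory reviewed in \cite{HechtMilicicSchmidWolf}; I would cite that reference for the core identification and confine the original work to verifying that the normalizations line up in our conventions and that the equality descends to the Grothendieck group as an equality of classes (rather than merely of Euler characteristics of complexes), which is immediate since every functor involved has bounded cohomological amplitude on finite-length objects by Theorem \ref{thm:propertiesofI}.
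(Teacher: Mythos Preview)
The paper does not actually prove this theorem: immediately after the statement it simply remarks that the case $\mathfrak{q}=\mathfrak{b}$ is a direct consequence of the Hecht--Mili\v{c}i\'c--Schmid--Wolf duality theorem, and that the general parabolic case is Theorem~1.2 of Kitchen. Your proposal is entirely consistent with this---you correctly identify the result as a repackaging of the HMSW duality (extended to partial flag varieties by Kitchen), and your sketch of the mechanism (orbit geometry, Kashiwara's equivalence, the $k_!$/$k_*$ duality, the $\rho$-shift bookkeeping) is an accurate expansion of what those references contain; so there is nothing to correct, and your write-up is if anything more informative than the paper's two-sentence citation.
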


In the special case $\mathfrak{q}=\mathfrak{b}$, this is a direct consequence of the duality theorem of Hecht, Milicic, Schmid, and Wolf (\cite{HechtMilicicSchmidWolf}). The general case was handled by Kitchen (Theorem 1.2, \cite{Kitchen2011}).

\subsection{Translation Functors}\label{sec:translationfunctors}

In this section, we will define translation functors in a very general setting. This approach will illuminate some general properties of these functors which we will need in Section \ref{sec:inducedmainresults}.

\subsection{Peirce Decompositions}\label{sec:peirce}

Let $A$ be an associative ring with unit. An idempotent $e \in A$ is any element satisfying $e^2=e$. Two idempotents $e,e' \in A$ are orthogonal if $ee' = e'e = 0$. Suppose $\{e_i\}_{i=1}^n \subset A$ is a collection of mutually orthogonal idempotents and assume
$$\sum_{i=1}^n e_i = 1$$
If we write $A_{ij}$ for the subgroup $e_iAe_j \subset A$, there is a decomposition
$$A = 1A1 = \left(\sum_{i=1}^ne_i\right)A\left(\sum_{i=1}^ne_i\right) = \sum_{i,j} A_{ij}$$
which is direct by the orthogonality condition on $\{e_i\}$. This is called the Perice decomposition of $A$ associated to the set $\{e_i\}$. 

The pieces $A_{ij}$ satisfy the obvious relations
$$A_{ij}A_{kl} \subseteq \delta_{jk} A_{il}$$
In particular, each $A_{ii}$ is a ring (although not a subring of $A$. Its unit $e_i$ is \emph{not} the unit of $A$). If $M$ is an $A$-module (all modules will be left modules unless otherwise noted), then $e_iM$ is an $A_{ii}$-module and there is a decomposition
\begin{equation}\label{eqn:decompofM}
M = \bigoplus_i e_iM
\end{equation}
The assignment $M \mapsto e_iM$ defines a covariant functor
$$P_i: A-\mathrm{mod} \to A_{ii}-\mathrm{mod} \qquad P_iM = e_iM$$
which is exact by \ref{eqn:decompofM}.

Define a functor in the opposite direction
$$Q_i: A_{ii}-\mathrm{mod} \to A-\mathrm{mod} \qquad Q_iN = A \otimes_{A_{ii}} N$$

\begin{proposition}\label{prop:QiPi}
In the setting described above, $Q_i$ is right inverse and right adjoint to $P_i$.
\end{proposition}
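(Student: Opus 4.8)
\subsection*{Proof proposal for Proposition \ref{prop:QiPi}}

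The plan is to deduce both assertions from the standard tensor--hom formalism applied to the two ``corner'' bimodules attached to the idempotent $e:=e_i$. Throughout write $B:=A_{ii}=eAe$, a ring with unit $e$, so that $P_iM=eM$ is tautologically a left $B$-module (via left multiplication) and $Q_iN=A\otimes_BN$. First I would record three elementary facts; the only subtlety, and such obstacle as there is, is purely bookkeeping --- keeping straight the several one-sided $B$-module structures and, in particular, the failure of $A$ to be a \emph{unital} right $B$-module. Fact one: since $N$ is a unital $B$-module, $a\otimes n=a\otimes en=ae\otimes n$ in $A\otimes_BN$, and $Ae$ is a right $B$-module direct summand of $A$; hence there is a canonical identification $Q_iN=A\otimes_BN\cong Ae\otimes_BN$, and I will work with $Q_i=Ae\otimes_B(-)$. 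Fact two: multiplication defines a natural isomorphism $eM\cong eA\otimes_AM$ of left $B$-modules, so $P_i=eA\otimes_A(-)$. Fact three: evaluation at $e$ defines a natural isomorphism $eM\cong\mathrm{Hom}_A(Ae,M)$ of left $B$-modules (the transported $B$-action being again left multiplication on $eM$), so also $P_i=\mathrm{Hom}_A(Ae,-)$.

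Next I would prove that $Q_i$ is a right inverse to $P_i$, i.e. $P_iQ_i\cong\mathrm{Id}$ as functors on left $B$-modules. Using Fact two and associativity of the tensor product, $P_iQ_i\cong(eA\otimes_AAe)\otimes_B(-)$. The multiplication map $eA\otimes_AAe\to eAe=B$, $x\otimes y\mapsto xy$, is an isomorphism of $(B,B)$-bimodules, with inverse $b\mapsto b\otimes e=e\otimes b$; therefore $P_iQ_i\cong B\otimes_B(-)\cong\mathrm{Id}$. Concretely, the resulting natural isomorphism is $N\cong P_iQ_iN=e(A\otimes_BN)$, $n\mapsto e\otimes n$ (with inverse $ea\otimes n\mapsto(eae)n$), and naturality in $N$ is immediate from the formulas.

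Finally, for the adjunction I would combine Fact three ($P_i=\mathrm{Hom}_A(Ae,-)$) with $Q_i=Ae\otimes_B(-)$ and invoke the tensor--hom adjunction for the $(A,B)$-bimodule $Ae$: for all left $B$-modules $N$ and left $A$-modules $M$ there is a natural isomorphism
$$\mathrm{Hom}_A\bigl(Ae\otimes_BN,\,M\bigr)\ \cong\ \mathrm{Hom}_B\bigl(N,\,\mathrm{Hom}_A(Ae,M)\bigr),$$
that is, $\mathrm{Hom}_A(Q_iN,M)\cong\mathrm{Hom}_B(N,P_iM)$, functorial in both variables; this is the asserted adjunction relating $Q_i$ and $P_i$. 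Its unit $N\to P_iQ_iN$ is precisely the isomorphism of the previous paragraph (so $Q_i$ is in addition fully faithful, which re-proves the right-inverse statement), and its counit $Q_iP_iM\to M$ is the action map $ae\otimes em\mapsto aem$; if one prefers, one may verify the two triangle identities directly from these formulas instead of citing the general adjunction. No step presents a genuine obstacle: once the three $B$-module structures above are set up, the argument is entirely formal.
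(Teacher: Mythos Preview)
Your proof is correct and follows essentially the same route as the paper: identify $Q_i$ with $Ae_i\otimes_{A_{ii}}(-)$, identify $P_i$ with $\mathrm{Hom}_A(Ae_i,-)$ via evaluation at $e_i$, and then invoke the tensor--hom adjunction for the $(A,A_{ii})$-bimodule $Ae_i$. The only cosmetic difference is that for the right-inverse statement the paper computes $P_iQ_iN = e_i(Ae_i\otimes_{A_{ii}}N) = A_{ii}\otimes_{A_{ii}}N = N$ directly, whereas you route it through the identification $P_i\cong eA\otimes_A(-)$ and the bimodule isomorphism $eA\otimes_AAe\cong B$; these are the same computation in slightly different packaging.
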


\begin{proof}
Note that
$$Q_iN = A \otimes_{A_{ii}} N = Ae_i \otimes_{A_{ii}}N $$
Hence,
$$P_iQ_iN = e_i(Ae_i \otimes_{A_{ii}} N) = A_{ii} \otimes_{A_{ii}} N = N$$
This proves that $Q_i$ is right inverse to $P_i$. 

The functor $P_i$ is naturally equivalent to $\mathrm{Hom}_A(Ae_i, \cdot)$. The equivalence is implemented by the natural transformation
$$\eta_M: \mathrm{Hom}_A(Ae_i, M) \to e_iM  \qquad \eta_M(\varphi) = \varphi(e_i)$$
with inverse
$$\eta_M^{-1}: e_iM \to \mathrm{Hom}_A(Ae_i,M) \qquad \eta_M^{-1}(e_im)(ae_i) = ae_im$$
Hence the asserted adjunction is a special case of the the usual tensor-hom adjunction (for the $A-A_{ii}$-bimodule $Ae_i$).
\end{proof}

We will also need

\begin{proposition}\label{prop:uniquesimplequotient}
In the setting of Proposition \ref{prop:QiPi}, let $N$ be a nonzero simple $A_{ii}$-module. Then the $A$-module $Q_iN$ has a unique simple quotient, $\tilde{Q}_iN$, and there is an isomorphism of $A_{ii}$-modules $P_i\tilde{Q}_iN \cong N$.
\end{proposition}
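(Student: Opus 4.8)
The plan is to mimic the classical argument that a module induced from a simple module over a corner ring has a unique simple quotient, transplanted into the Peirce setting of Proposition \ref{prop:QiPi}. First I would show that $Q_iN$ is nonzero: since $P_iQ_iN \cong N \neq 0$ by Proposition \ref{prop:QiPi}, the module $Q_iN$ cannot vanish. Then I would argue that $Q_iN$ has \emph{some} maximal (proper) submodule. This is not automatic for an arbitrary module, so the key is to use the functor $P_i$: any proper submodule $M \subsetneq Q_iN$ must have $P_iM \subsetneq P_iQ_iN = N$ — indeed if $P_iM = N$ then, because $Q_iN = Ae_i\otimes_{A_{ii}}N$ is generated as an $A$-module by $e_i\otimes N = P_iQ_iN$, we would get $M = Q_iN$, a contradiction. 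Since $N$ is simple, $P_iM \subsetneq N$ forces $P_iM = 0$, i.e. $e_iM = 0$. Thus every proper submodule of $Q_iN$ lies in the $A$-submodule $M_0 := \{m \in Q_iN : e_i m = 0\}$, which is itself proper (as $e_i\cdot(e_i\otimes n) = e_i\otimes n$ and these generate). Hence $M_0$ is the unique maximal submodule, and $\widetilde{Q}_iN := Q_iN/M_0$ is the unique simple quotient.

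It remains to identify $P_i\widetilde{Q}_iN$. Since $P_i$ is exact (it is the summand functor $M \mapsto e_iM$ in the decomposition \eqref{eqn:decompofM}), applying it to the short exact sequence $0 \to M_0 \to Q_iN \to \widetilde{Q}_iN \to 0$ gives $0 \to P_iM_0 \to P_iQ_iN \to P_i\widetilde{Q}_iN \to 0$. But $P_iM_0 = e_iM_0 = 0$ by the very definition of $M_0$, and $P_iQ_iN \cong N$ by Proposition \ref{prop:QiPi}. Therefore $P_i\widetilde{Q}_iN \cong N$ as $A_{ii}$-modules, as claimed.

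The only real subtlety — and the step I expect to be the main obstacle to write cleanly — is the claim that a submodule $M$ with $P_iM = N$ must be all of $Q_iN$; this is where one genuinely uses the description $Q_iN = Ae_i \otimes_{A_{ii}} N$ and the fact that $e_i$ acts as the identity on the generating subspace $e_i \otimes N$, so that $A\cdot(e_i\otimes N) = Q_iN$. Once that generation statement is in hand, everything else is formal: uniqueness of the maximal submodule follows because $M_0$ visibly contains every proper submodule, and the computation of $P_i\widetilde{Q}_iN$ is just exactness of $P_i$ together with Proposition \ref{prop:QiPi}. I would present the generation fact as a short standalone observation before assembling the rest of the proof.
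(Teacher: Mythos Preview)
Your approach is essentially the paper's, but there is a genuine slip: the set $M_0 := \{m \in Q_iN : e_im = 0\}$ is \emph{not} an $A$-submodule in general. Left multiplication by $e_i$ is $A_{ii}$-linear but not $A$-linear, since $e_i$ is not central; for $a \in A$ and $m \in M_0$ there is no reason for $e_i(am) = (e_ia)m$ to vanish. Concretely, take $A = M_2(\mathbb{C})$ with $e_1 = E_{11}$, $e_2 = E_{22}$, and $N = \mathbb{C}$: then $Q_1N \cong \mathbb{C}^2$ is the standard simple $A$-module, $M_0 = \{(0,\ast)^t\}$, and $E_{12}\cdot(0,1)^t = (1,0)^t \notin M_0$. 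So you cannot declare $M_0$ itself to be the maximal submodule.

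The repair is exactly what the paper does and costs nothing: let $J$ be the sum of all proper $A$-submodules of $Q_iN$. Your argument (that every proper submodule is annihilated by $e_i$) shows $J \subseteq M_0 \subsetneq Q_iN$, so $J$ is a proper $A$-submodule and hence the unique maximal one. Then $e_iJ = 0$, and your exactness computation for $P_i\tilde{Q}_iN$ goes through verbatim with $J$ in place of $M_0$. Everything else in your proposal---the generation of $Q_iN$ by $e_i\otimes N$, the use of simplicity of $N$, and the final identification via $P_iQ_iN \cong N$---matches the paper's argument.
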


\begin{proof}
Left multiplication by $e_i$ defines an $A_{ii}$-linear endomorphism $s_i$ of $Q_iN = A \otimes_{A_{ii}} N$ with image equal to $N$. Let $L$ be a proper $A$-submodule of $Q_iN$, and suppose $s_i(L) \neq 0$. Then by the simplicity of $N$, $s_i(L) = N$ and therefore $N \subseteq L$. But $N$ generates $Q_iN$ as an $A$-module, so in fact $Q_iN \subseteq L$ which contradicts the properness of $L \subset Q_iN$. Hence, $L \subseteq \ker{s_i}$.

Let $J \subset Q_iN$ be the sum of all proper submodules of $Q_iN$. By the argument above, $J \subseteq \ker{s_i}$. In particular, $J$ is the unique maximal proper submodule of $Q_iN$ and $e_iJ = 0$. Let
$$\tilde{Q}_iN := Q_iN/J$$
Then $\tilde{Q}_iN$ is the unique simple quotient of $Q_iN$ and
$$P_i\tilde{Q}_iN = e_i(Q_iN/J) \cong e_i(Q_i) = N$$
as desired. 
\end{proof}

\subsection{Translation Functors: Abstract Definition}\label{sec:abstracttranslation}

Now let $B$ be a (complex) associative algebra and let $F$ be a finite-dimensional vector space. Let $A = B \otimes_{\mathbb{C}} \mathrm{End}(F)$ and choose a family $\{e_i\} \subset A$ of mutually orthogonal idempotents with sum equal to $1$. Write
$$A = \bigoplus_{ij} A_{ij}$$
for the associated Peirce decomposition of $A$.

If $M$ is a $B$-module, then $M \otimes_{\mathbb{C}} F$ is an $A$-module. The assignment $M \mapsto M \otimes_{\mathbb{C}}F$ defines a covariant functor
$$t_F: B-\mathrm{mod} \to A-\mathrm{mod} \qquad t_F(M) = M \otimes_{\mathbb{C}}F$$
It is a standard fact that $t_F$ is an (exact) equivalence of categories, although its inverse is a bit harder to describe. 

Choose an idempotent $e_i \in A$. Let $T_{F,i}B := A_{ii}$. The abstract translation functor associated to the data of $B,F$, and $e_i$, is the composite
$$T_{F,i} := P_i \circ t_F: B-\mathrm{mod} \to T_{F,i}B-\mathrm{mod}$$
Since both $P_i$ and $t_F$ are exact, so is $T_{F,i}$. If $N$ is a nonzero simple $T_{F,i}B$-module, there is a canonically defined $B$-module $t_F^{-1}\tilde{Q}_iN$. By Proposition \ref{prop:uniquesimplequotient}, this $B$-module is simple and maps to $N$ under $T_{F,i}$.

\subsection{Translation Functors: Dixmier Algebras}\label{sec:translationdixmier}

We will apply the elementary results of Sections \ref{sec:peirce} and \ref{sec:abstracttranslation} in the following special case:

\begin{itemize}
    \item $I \subset U(\mathfrak{g})$ is a two-sided ideal such that $I \cap Z(\mathfrak{g})$ has finite codimension in $Z(\mathfrak{g})$
    \item $B = U(\mathfrak{g})/I$
    \item $F$ is a finite-dimensional representation of $\mathbf{G}$
    \item $A = B \otimes \mathrm{End}(F)$
\end{itemize}

By Propositions \ref{prop:dixmier1} and \ref{prop:dixmier2}, both $B$ and $A$ are Dixmier algebras. Condition \ref{cond:dixmier3} of Definition \ref{def:dixmier} implies that $A$ is $Z(\mathfrak{g})\times Z(\mathfrak{g})$-finite. Hence, $A$ admits a Peirce decomposition by generalized left and right infinitesimal characters:
$$A = \bigoplus_{\mu\nu} A_{\mu \nu} \qquad  A_{\mu \nu} := \{a \in A: \mathfrak{m}_{\mu}^pa = a\mathfrak{m}_{\nu}^q = 0 \text{ for } p,q >>0\}$$
where $\mathfrak{m}_{\mu}$ is the kernel in $Z(\mathfrak{g})$ of the infinitesimal character $\gamma_{\mu}$. If we fix $\mu$ appearing in the decomposition above, we get (as in Section \ref{sec:abstracttranslation}) an algebra $T_{F,\mu}B = A_{\mu\mu}$ (which is in fact a Dixmier algebra, since $B$ is) and a functor
$$T_{F,\mu}: B-\mathrm{mod} \to F_{F,\mu}B-\mathrm{mod}$$
As in Section \ref{sec:abstracttranslation}, $T_{F,\mu}$ is an exact functor and every nonzero simple $T_{F,\mu}B$-module has a canonically defined simple preimage.

If we introduce a $\mathbf{K}$-action, almost everything remains true. $T_{F,\mu}$ defines a functor
$$T_{F,\mu}: M(B,\mathbf{K}) \to M(T_{F,\mu}B,\mathbf{K})$$
Like its non-equivariant counterpart, this functor is exact and every nonzero simple $(T_{F,\mu}B,\mathbf{K})$-module has a canonical simple preimage.

\subsection{Main Results}\label{sec:inducedmainresults}

In this section, we analyze the effect of the global sections functor $\Gamma: \mathcal{M}(\mathcal{D}_{\mathcal{Q}}^{\lambda},\mathbf{K}) \to M(\mathfrak{g},\mathbf{K})$ on standard modules $I(Z,\tau)$ in the case of unipotent infinitesimal character. To simplify the arguments, we will assume that $\mathcal{O}_{\mathfrak{g}}$ is induced from $\{0\} \subset \mathfrak{l}$. The general case is proved by a similar argument. 

Our first observation is the following

\begin{proposition}\label{prop:phiforunipotentlambda}
Let $\mathcal{O}_{\mathfrak{g}} = \mathrm{Ind}^{\mathfrak{g}}_{\mathfrak{l}} \{0\}$ and suppose

\begin{enumerate}
    \item $\gamma_{\lambda -\rho_{\mathfrak{g}}} \in \unip(\mathcal{O}_{\mathfrak{g}})$
    \item The moment map
    $$\eta: T^*\mathcal{Q} \to \overline{\mathcal{O}}_{\mathfrak{g}}$$
    is birational
\end{enumerate}
Then the map
$$\phi: U(\mathfrak{g}) \to \Gamma(\mathcal{D}_{\mathcal{Q}}^{\lambda},\mathbf{K})$$
defined in Section \ref{sec:Dmodules} has the following properties:

\begin{enumerate}
    \item $\ker{\phi} = I_{\lambda -\rho_{\mathfrak{g}}}$ (cf Section \ref{sec:unipotentrepresentations}).
    \item $\phi$ is surjective
\end{enumerate}
\end{proposition}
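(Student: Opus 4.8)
The plan is to compare $\Gamma(\mathcal{Q},\mathcal{D}_{\mathcal{Q}}^{\lambda})$ with $U(\mathfrak{g})/I_{\lambda-\rho_{\mathfrak{g}}}$ by a dimension/associated-variety argument, using the birationality hypothesis to pin down the size of the global sections algebra. First I would recall that $\phi$ factors through $U(\mathfrak{g})/I$ where $I = \ker\phi$ is a two-sided ideal whose infinitesimal character is $\gamma_{\lambda-\rho_{\mathfrak{g}}}$ (since every module in the image of $\Gamma$ has that infinitesimal character, and $\Gamma(\mathcal{Q},\mathcal{D}^\lambda_{\mathcal{Q}})$ itself, viewed as a bimodule, has generalized infinitesimal character $\gamma_{\lambda-\rho_{\mathfrak{g}}}$ on both sides; strict antidominance should even make this non-generalized). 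Then I would compute $\AV(I)$: using the standard filtration on $U(\mathfrak{g})$ and the order filtration on $\mathcal{D}^\lambda_{\mathcal{Q}}$, the symbol of $\phi$ is the comorphism of the moment map $\eta: T^*\mathcal{Q} \to \mathfrak{g}^*$, whose image is $\overline{\mathcal{O}}_{\mathfrak{g}} = \overline{\mathrm{Ind}^{\mathfrak{g}}_{\mathfrak{l}}\{0\}}$. Hence $\gr\phi$ has kernel contained in the ideal of $\overline{\mathcal{O}}_{\mathfrak{g}}$, giving $\AV(I) \subseteq \overline{\mathcal{O}}_{\mathfrak{g}}$; conversely $I$ is proper so $\AV(I)$ is nonempty and $\mathbf{G}$-stable, and the infinitesimal-character constraint forces $\AV(I) = \overline{\mathcal{O}}_{\mathfrak{g}}$ (or at least $\supseteq$ it, using that $I$ cannot be contained in anything with smaller associated variety at this infinitesimal character — this is where unipotence of $\gamma_{\lambda-\rho_{\mathfrak{g}}}$ enters).

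Next, to identify $I$ with the maximal (unipotent) ideal $I_{\lambda-\rho_{\mathfrak{g}}}$, I would argue that $I$ is prime, or even primitive: $\mathcal{D}^\lambda_{\mathcal{Q}}$ is a simple sheaf of rings on $\mathcal{Q}$ (being a TDO), so its global sections form a prime — in fact, when $\lambda-\rho_{\mathfrak{g}}$ is strictly antidominant so that $\Gamma'$ is an equivalence, a primitive — algebra, whence $I = \ker\phi$ is a prime two-sided ideal of infinitesimal character $\gamma_{\lambda-\rho_{\mathfrak{g}}}$. By part (3) of Theorem \ref{thm:primitiveideals} this $I$ is primitive, and by Theorem \ref{thm:Josephirreducibility} its associated variety is the closure of a single orbit, which by the previous paragraph is $\overline{\mathcal{O}}_{\mathfrak{g}}$. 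Now invoke Proposition \ref{prop:propsofunipotentideals}: at the unipotent infinitesimal character $\gamma_{\lambda-\rho_{\mathfrak{g}}}$, the unipotent ideal $I_{\lambda-\rho_{\mathfrak{g}}}$ is the unique maximal ideal and is the unique primitive ideal of maximal associated variety $\overline{\mathcal{O}}_{\mathfrak{g}}$ among $\Prim^{\lambda-\rho_{\mathfrak{g}}}U(\mathfrak{g})$. Since $I$ is primitive with $\AV(I) = \overline{\mathcal{O}}_{\mathfrak{g}}$, Proposition \ref{prop:propsofunipotentideals} forces $I = I_{\lambda-\rho_{\mathfrak{g}}}$. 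This gives (1).

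For (2), surjectivity of $\phi$, the cleanest route is again the graded one: birationality of $\eta$ means that $\eta$ induces a birational morphism onto its image $\overline{\mathcal{O}}_{\mathfrak{g}}$, and $T^*\mathcal{Q} = \mathbf{G}\times_{\mathbf{Q}}(\mathfrak{g}/\mathfrak{q})^* \cong \mathbf{G}\times_{\mathbf{Q}}\mathfrak{u}$ with $\Gamma(T^*\mathcal{Q},\mathcal{O})$ normal (it is a normal variety, being a vector bundle over a smooth projective base with the relevant properties, or one argues directly). The associated graded of $\phi$ is then the restriction map $\mathbb{C}[\mathfrak{g}^*] \to \Gamma(T^*\mathcal{Q},\mathcal{O}_{T^*\mathcal{Q}})$; birationality plus normality of $\Gamma(T^*\mathcal{Q},\mathcal{O})$ and of $\overline{\mathcal{O}}_{\mathfrak{g}}$ (nilpotent orbit closures induced with birational moment map are normal — or one quotes that $\Gamma(T^*\mathcal{Q},\mathcal{O}) = \mathbb{C}[\overline{\mathcal{O}}_{\mathfrak{g}}]$, Zariski's main theorem) shows $\gr\phi$ is surjective onto $\gr(\Gamma(\mathcal{D}^\lambda_{\mathcal{Q}}))$, where here one must know $\gr\Gamma(\mathcal{D}^\lambda_{\mathcal{Q}}) \hookrightarrow \Gamma(T^*\mathcal{Q},\mathcal{O})$ (the symbol map is injective because higher cohomology of the symbol sheaves vanishes — another consequence of the geometry, via strict antidominance). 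A surjection on associated gradeds lifts to a surjection on the filtered level, giving (2). The main obstacle I expect is controlling $\Gamma(\mathcal{Q},\mathcal{D}^\lambda_{\mathcal{Q}})$ and its associated graded precisely enough — i.e., the vanishing of higher cohomology needed to identify $\gr\Gamma(\mathcal{D}^\lambda_{\mathcal{Q}})$ with functions on $T^*\mathcal{Q}$, and the normality input that converts birationality of $\eta$ into an isomorphism $\mathbb{C}[\overline{\mathcal{O}}_{\mathfrak{g}}] \cong \Gamma(T^*\mathcal{Q},\mathcal{O}_{T^*\mathcal{Q}})$; these are where the hypotheses (strict antidominance of $\mu-\rho(\mathfrak{u})$, birationality of the moment map) are really doing the work and where I would need to be careful rather than invoke generalities.
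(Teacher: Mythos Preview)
Your treatment of part (1) is close to the paper's, but the primeness step is not quite solid. Arguing that ``$\mathcal{D}^\lambda_{\mathcal{Q}}$ is a simple sheaf of rings, so its global sections form a prime algebra'' is not a standard implication, and you are forced to fall back on strict antidominance---which is \emph{not} a hypothesis of this proposition. The paper's route is cleaner and needs no dominance at all: Borho--Brylinski gives $\gr\Gamma(\mathcal{Q},\mathcal{D}^\lambda_{\mathcal{Q}}) \cong \Gamma(T^*\mathcal{Q},\mathcal{O}_{T^*\mathcal{Q}})$, which is an integral domain since $T^*\mathcal{Q}$ is reduced and irreducible; hence $\Gamma(\mathcal{Q},\mathcal{D}^\lambda_{\mathcal{Q}})$ has no zero divisors, so $\ker\phi$ is completely prime, and then Theorem~\ref{thm:primitiveideals}(3) and Proposition~\ref{prop:propsofunipotentideals} finish exactly as you say. (Minor point: your containments are reversed. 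From $\gr I \subseteq \ker(\gr\phi)$ one gets $\AV(I) \supseteq \overline{\mathcal{O}}_{\mathfrak{g}}$, not $\subseteq$; the paper simply quotes Borho--Brylinski for the equality $\AV(\ker\phi)=\eta(T^*\mathcal{Q})=\overline{\mathcal{O}}_{\mathfrak{g}}$.)

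For part (2) there is a real gap. Your argument hinges on surjectivity of $\gr\phi$, i.e.\ of $\eta^*:\mathbb{C}[\mathfrak{g}^*]\to\Gamma(T^*\mathcal{Q},\mathcal{O}_{T^*\mathcal{Q}})$. But $T^*\mathcal{Q}$ is smooth, hence normal, so $\Gamma(T^*\mathcal{Q},\mathcal{O}_{T^*\mathcal{Q}})$ is the normalization of $\mathbb{C}[\overline{\mathcal{O}}_{\mathfrak{g}}]$; birationality makes $\eta^*$ surjective only if $\overline{\mathcal{O}}_{\mathfrak{g}}$ is already normal. That is not assumed here, and it is not a consequence of birationality of the moment map (Richardson orbit closures can be non-normal). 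So the ``surjective on graded, hence on filtered'' argument does not go through in general.

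The paper sidesteps normality entirely by reusing the unipotence hypothesis on the \emph{cokernel}. Let $J\subset U(\mathfrak{g})$ be the annihilator of $\coker\phi$. Borho--Brylinski (their Corollary~5.12) shows that under the birationality assumption one has the strict inclusion $\AV(J)\subsetneq\overline{\mathcal{O}}_{\mathfrak{g}}$. Since $J$ has infinitesimal character $\gamma_{\lambda-\rho_{\mathfrak{g}}}$ and, by Proposition~\ref{prop:propsofunipotentideals}, every primitive ideal at that infinitesimal character has associated variety containing $\overline{\mathcal{O}}_{\mathfrak{g}}$, no proper $J$ can exist; hence $J=U(\mathfrak{g})$ and $\coker\phi=0$. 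The point you are missing is that unipotence is used \emph{twice}: once to pin down $\ker\phi$, and once more to kill $\coker\phi$.
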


\begin{proof}
Most of the necessary ingredients are contained in \cite{BorhoBrylinski1982}. There is a degree filtration on the Dixmier algebra $\Gamma(\mathcal{Q},\mathcal{D}_{\mathcal{Q}}^{\lambda})$. With respect to this filtation, there is a natural isomorphism of graded commutative algebras
$$\gr \Gamma(\mathcal{Q},\mathcal{D}_{\mathcal{Q}}^{\lambda}) \cong \Gamma(T^*\mathcal{Q},\mathcal{O}_{T^*\mathcal{Q}})$$
The algebra on the right is an integral domain (since the scheme $T^*\mathcal{Q}$ is reduced and irreducible). It follows for general reasons that $\Gamma(\mathcal{Q},\mathcal{D}_{\mathcal{Q}}^{\lambda})$ is zero-divisor free. Since $U(\mathfrak{g})/\ker{\phi}$ is included in this algebra, $\ker{\phi}$ is a completely prime ideal. Using Theorem \ref{thm:primitiveideals}, we deduce that
$$\ker{\phi} \in \mathrm{Prim}^{\lambda-\rho_{\mathfrak{g}}}U(\mathfrak{g})$$
We also have
$$\AV(\ker{\phi}) = \eta(T^*\mathcal{Q}) = \overline{\mathcal{O}}_{\mathfrak{g}}$$
by, e.g. Proposition 4.3 in \cite{BorhoBrylinski1982}. Hence, $\ker{\phi} = I_{\lambda -\rho_{\mathfrak{g}}}$ by Proposition \ref{prop:propsofunipotentideals}. 

Let $J$ be annihilator in  $U(\mathfrak{g})$ of the cokernel of $\phi$. Under the birationality assumption, there is a strict inclusion
$$\AV(J) \subset \eta(T^*\mathcal{Q}) = \overline{\mathcal{O}}_{\mathfrak{g}}$$
by, e.g. Corollary 5.12 in \cite{BorhoBrylinski1982}.

Since $J$ has infinitesimal character $\gamma_{\lambda-\rho_{\mathfrak{g}}}$, this implies that $J = U(\mathfrak{g})$ and hence that $\phi$ is surjective.
\end{proof}

Recall the factorization $\Gamma = \phi^* \circ \Gamma'$. Proposition \ref{prop:phiforunipotentlambda} tells us that $\phi^*$ is easy to understand when $\gamma_{\lambda-\rho_{\mathfrak{g}}}$ is unipotent. Next, we turn our attention to $\Gamma'$. If $\lambda-\rho_{\mathfrak{g}}$ is strictly antidominant, then $\Gamma'$ is an equivalence by Theorem \ref{thm:bbequivalence1}. Unfortunately, this condition is rarely satisfied when $\gamma_{\lambda-\rho_{\mathfrak{g}}}$ is unipotent. To apply Theorems \ref{thm:bbequivalence1} and \ref{thm:bbequivalence2}, we will first need to translate to a more regular infinitesimal character. The following proposition is critical

\begin{proposition}[\cite{Vogan1988b}, Proposition 4.7]\label{prop:translationweaklyfair1}
Suppose $\lambda-\rho(\mathfrak{u})$ is antidominant for $\mathfrak{u}$, i.e.
$$\langle \lambda - \rho(\mathfrak{u}),\alpha^{\vee}\rangle \leq 0  \qquad \alpha \in \Delta^+(\mathfrak{u},\mathfrak{h})$$
Let $\xi \in \mathfrak{h}^*$ be the weight of a one-dimensional representation $\mathbb{C}_{\xi}$ of $\mathbf{Q}$, and let $F_{\xi}$ be the irreducible representation of $\mathbf{G}$ of highest weight $\xi$. Then there is a natural isomorphism of Dixmier algebras
$$\Gamma(\mathcal{Q},\mathcal{D}_{\mathcal{Q}}^{\lambda}) \cong T_{F_{\xi},\lambda-\rho_{\mathfrak{g}}}\Gamma(\mathcal{Q},\mathcal{D}_{\mathcal{Q}}^{\lambda-\xi})$$
\end{proposition}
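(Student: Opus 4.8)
The plan is to prove Proposition \ref{prop:translationweaklyfair1} by a direct comparison of graded algebras, leveraging the fact that both sides of the asserted isomorphism carry compatible filtrations whose associated graded objects are rings of functions on the cotangent bundle $T^*\mathcal{Q}$ (or on a cover of an orbit closure), and then bootstrapping from an isomorphism on associated graded to an isomorphism of the filtered algebras themselves. The key point is that translation, on the level of Dixmier algebras, is simply the process of ``cutting out'' the block of $\Gamma(\mathcal{Q},\mathcal{D}_{\mathcal{Q}}^{\lambda-\xi})\otimes\operatorname{End}(F_\xi)$ on which the left and right infinitesimal characters are both $\gamma_{\lambda-\rho_{\mathfrak{g}}}$, whereas on the geometric side, tensoring with the line bundle $\mathcal{L}^\xi$ implements the shift $\lambda-\xi \rightsquigarrow \lambda$ of the TDO. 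So the heart of the argument is matching these two operations.

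First I would construct a natural algebra map in one direction. Starting from the $\mathbf{G}$-equivariant identification $\mathcal{D}_{\mathcal{Q}}^{\lambda} \cong \mathcal{L}^\xi \otimes \mathcal{D}_{\mathcal{Q}}^{\lambda-\xi}\otimes(\mathcal{L}^\xi)^{-1}$ valid when $\xi$ integrates to a character of $\mathbf{Q}$ (which is exactly the hypothesis), one gets an isomorphism of global section algebras $\Gamma(\mathcal{Q},\mathcal{D}_{\mathcal{Q}}^{\lambda}) \cong \Gamma(\mathcal{Q}, \mathcal{L}^\xi \otimes \mathcal{D}_{\mathcal{Q}}^{\lambda-\xi}\otimes(\mathcal{L}^\xi)^{-1})$. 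The right-hand side receives a natural map from the $(\lambda-\rho_{\mathfrak{g}},\lambda-\rho_{\mathfrak{g}})$-generalized-infinitesimal-character block of $\Gamma(\mathcal{Q},\mathcal{D}_{\mathcal{Q}}^{\lambda-\xi})\otimes\operatorname{End}(F_\xi)$, because $F_\xi$ has $\xi$ as an extreme weight and the corresponding weight line is a $\mathbf{Q}$-subrepresentation isomorphic to $\mathbb{C}_\xi$; projecting onto (and including from) that line, combined with the $(\varphi\otimes\rho)\circ\Delta$ structure of Proposition \ref{prop:dixmier2}, produces the desired algebra homomorphism $T_{F_\xi,\lambda-\rho_{\mathfrak{g}}}\Gamma(\mathcal{Q},\mathcal{D}_{\mathcal{Q}}^{\lambda-\xi}) \to \Gamma(\mathcal{Q},\mathcal{D}_{\mathcal{Q}}^{\lambda})$. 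One must check it is a well-defined ring map and $\mathbf{G}$-equivariant, which is formal once the weight-line bookkeeping is set up.

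Next I would show this map is an isomorphism by passing to associated graded. Both Dixmier algebras carry degree filtrations (as used in the proof of Proposition \ref{prop:phiforunipotentlambda}) with $\gr\Gamma(\mathcal{Q},\mathcal{D}_{\mathcal{Q}}^{\mu})\cong\Gamma(T^*\mathcal{Q},\mathcal{O}_{T^*\mathcal{Q}})$ independent of the twist $\mu$; likewise the translation block $T_{F_\xi,\lambda-\rho_{\mathfrak{g}}}$ of $\Gamma(\mathcal{Q},\mathcal{D}_{\mathcal{Q}}^{\lambda-\xi})\otimes\operatorname{End}(F_\xi)$, when filtered compatibly, has associated graded a block of $\Gamma(T^*\mathcal{Q},\mathcal{O})\otimes\operatorname{End}(F_\xi)$ cut out by the vanishing of the (classical) moment map values; the relevant block is the rank-one summand corresponding to the extreme weight $\xi$, whose associated graded is again $\Gamma(T^*\mathcal{Q},\mathcal{O}_{T^*\mathcal{Q}})$. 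The induced map on associated graded is then checked to be the identity of $\Gamma(T^*\mathcal{Q},\mathcal{O}_{T^*\mathcal{Q}})$ — this is the computation that makes everything work, and it reduces to the observation that classically, translating by $F_\xi$ and then restricting to the $\xi$-weight line corresponds to multiplication by the nowhere-vanishing section of a line bundle pulled back from $\mathcal{Q}$, i.e. an isomorphism on functions. Since a filtered map inducing an isomorphism on associated graded (both filtrations exhaustive and separated, here by finiteness) is itself an isomorphism, we conclude.

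The main obstacle is the bookkeeping in the second step: one has to be careful that the filtration on the translated algebra $T_{F_\xi,\lambda-\rho_{\mathfrak{g}}}\Gamma(\mathcal{Q},\mathcal{D}_{\mathcal{Q}}^{\lambda-\xi})$ — which sits inside $\Gamma(\mathcal{Q},\mathcal{D}_{\mathcal{Q}}^{\lambda-\xi})\otimes\operatorname{End}(F_\xi)$ as a Peirce piece for the central idempotents — is the ``right'' one, compatible under the comparison map with the degree filtration on $\Gamma(\mathcal{Q},\mathcal{D}_{\mathcal{Q}}^{\lambda})$, and that the antidominance hypothesis on $\lambda-\rho(\mathfrak{u})$ (which guarantees the extreme weight $\xi$ actually contributes the expected block and that no other weights of $F_\xi$ land in the same infinitesimal character) is used correctly. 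This is essentially the content of Vogan's Proposition 4.7, and I would cite \cite{Vogan1988b} for the delicate parts while supplying the geometric $\gr$-argument above as the conceptual skeleton.
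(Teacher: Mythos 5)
The paper does not prove this proposition: it is imported verbatim from Vogan's Dixmier-algebra paper (Proposition 4.7 of \cite{Vogan1988b}) and cited without argument. There is therefore no ``paper's own proof'' to match against, and your proposal must be judged on its own.

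Your overall plan---build a natural map and show it is an isomorphism by passing to associated graded---is a reasonable skeleton, but the step that actually carries the weight is asserted rather than argued, and I do not believe it is correct as stated. You claim that $T_{F_\xi,\lambda-\rho_{\mathfrak{g}}}\Gamma(\mathcal{Q},\mathcal{D}^{\lambda-\xi}_{\mathcal{Q}})$ carries a filtration whose associated graded is a ``rank-one summand'' of $\Gamma(T^*\mathcal{Q},\mathcal{O})\otimes\operatorname{End}(F_\xi)$, ``cut out by the vanishing of the (classical) moment map values.'' This picture does not survive inspection. The Peirce decomposition in question is by generalized left/right $Z(\mathfrak{g})$-infinitesimal characters, and at the associated-graded level these degenerate: the classical moment map for $T^*\mathcal{Q}$ lands in the nilpotent cone, so the image of $\operatorname{Sym}(\mathfrak{g})^{\mathbf{G}}$ in $\Gamma(T^*\mathcal{Q},\mathcal{O})$ is the augmentation character, and the distinct infinitesimal-character blocks collapse. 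The Peirce idempotents do not live in degree zero in any obvious way (via $(\phi\otimes\rho)\circ\Delta$, an element of $Z(\mathfrak{g})$ of degree $n$ spreads across $\bigoplus_{k\le n}F_k B\otimes\operatorname{End}(F_\xi)$), so one cannot simply take $\gr$ of the Peirce piece and read off a matrix block of $\operatorname{End}(F_\xi)$. In short, the crucial identification $\gr\bigl(T_{F_\xi,\lambda-\rho_{\mathfrak{g}}}\Gamma(\mathcal{Q},\mathcal{D}^{\lambda-\xi}_{\mathcal{Q}})\bigr)\cong\Gamma(T^*\mathcal{Q},\mathcal{O}_{T^*\mathcal{Q}})$ is exactly the hard content, and it is deferred to Vogan rather than established.

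The cleaner and, I believe, correct route is to work at the sheaf level with the $\mathbf{Q}$-module filtration of $F_\xi$. The $\mathbf{Q}$-submodule $\mathbb{C}_\xi\subset F_\xi$ sits at the bottom of a $\mathbf{Q}$-stable filtration of $F_\xi$, which induces a $\mathbf{G}$-equivariant filtration of $\mathcal{O}_{\mathcal{Q}}\otimes F_\xi$ by sub-bundles whose subquotients are the line bundles $\mathcal{L}^{\mu}$ for $\mu$ the $\mathfrak{q}$-extremal weights of $F_\xi$. Tensoring with $\mathcal{D}^{\lambda-\xi}_{\mathcal{Q}}$ produces a filtration of $\mathcal{D}^{\lambda-\xi}_{\mathcal{Q}}\otimes F_\xi$ whose subquotient attached to $\mu$ carries the single right infinitesimal character $\gamma_{\lambda-\xi+\mu-\rho_{\mathfrak{g}}}$. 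The antidominance hypothesis $\langle\lambda-\rho(\mathfrak{u}),\alpha^{\vee}\rangle\le 0$ for $\alpha\in\Delta^+(\mathfrak{u},\mathfrak{h})$ is exactly what forces $\mu=\xi$ to be the unique extremal weight contributing $\gamma_{\lambda-\rho_{\mathfrak{g}}}$, so the Peirce projection isolates the bottom piece $\mathcal{L}^{\xi}\otimes\mathcal{D}^{\lambda-\xi}_{\mathcal{Q}}$, and folding in $F_\xi^{*}$ and passing to global sections recovers $\Gamma(\mathcal{Q},\mathcal{D}^{\lambda}_{\mathcal{Q}})=\Gamma(\mathcal{Q},\mathcal{L}^{\xi}\otimes\mathcal{D}^{\lambda-\xi}_{\mathcal{Q}}\otimes(\mathcal{L}^{\xi})^{-1})$. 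This is where the antidominance actually does work, and without spelling out this weight-by-weight comparison your argument leaves the key step unproven.
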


In the setting of Proposition \ref{prop:translationweaklyfair1}, $T_{F_{\xi},\lambda-\rho_{\mathfrak{g}}}$ defines a surjective group homomorphism
$$T_{F_{\xi},\lambda-\rho_{\mathfrak{g}}}: KM(\Gamma(\mathcal{Q},\mathcal{D}_{\mathcal{Q}}^{\lambda-\xi}),\mathbf{K}) \twoheadrightarrow KM(\Gamma(\mathcal{Q},\mathcal{D}_{\mathcal{Q}}^{\lambda}),\mathbf{K})$$
Write $\mathcal{L}^{\xi} \to \mathcal{Q}$ for the $\mathbf{G}$-equivariant line bundle corresponding to $\mathbb{C}_{\xi}$. If $(Z,\tau)$ is a standard datum for $(\mathfrak{q},\lambda-\xi)$, then $(Z,\tau \otimes \mathcal{L}^{\xi}|_Z)$ is a standard datum for $(\mathfrak{q},\lambda)$. This defines a bijection between standard data for $(\mathfrak{q},\lambda-\xi)$ and standard data for $(\mathfrak{q},\lambda)$ (with inverse $(Z,\tau) \mapsto (Z, \tau \otimes \mathcal{L}^{-\xi}|_Z)$). By the proof of Proposition \ref{prop:phiforunipotentlambda} in \cite{Vogan1988b}

\begin{proposition}[\cite{Vogan1988b}, Proposition 4.7]\label{prop:translationweaklyfair2}
In the setting of Proposition \ref{prop:translationweaklyfair1}, let $(Z,\tau)$ be a standard datum for $(\mathfrak{q},\lambda)$. Then there is an equality in $KM(\Gamma(\mathcal{Q},\mathcal{D}_{\mathcal{Q}}^{\lambda},\mathbf{K})$
$$\Gamma' I(Z,\tau) = T_{F_{\xi},\lambda-\rho_{\mathfrak{g}}} \Gamma' I(Z,\tau \otimes \mathcal{L}^{-\xi}|_Z)$$
\end{proposition}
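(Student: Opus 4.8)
The plan is to realise the translation functor $T_{F_{\xi},\lambda-\rho_{\mathfrak{g}}}$ geometrically, as an operation on $\mathbf{K}$-equivariant twisted $\mathcal{D}$-modules on $\mathcal{Q}$, to check that it carries the standard module $I(Z,\tau\otimes\mathcal{L}^{-\xi}|_{Z})$ for $(\mathfrak{q},\lambda-\xi)$ to the standard module $I(Z,\tau)$ for $(\mathfrak{q},\lambda)$, and then to transport this to Dixmier-algebra modules via $\Gamma'$. Write $\mu:=\lambda-\rho_{\mathfrak{g}}$; recall from Section~\ref{sec:translationdixmier} that $T_{F_{\xi},\mu}=P_{\mu}\circ t_{F_{\xi}}$, where $t_{F_{\xi}}(M)=M\otimes_{\C}F_{\xi}$ and $P_{\mu}$ is the exact Peirce projection onto the summand of generalised infinitesimal character $\gamma_{\mu}$, and that $\Gamma'$ commutes with $-\otimes_{\C}F_{\xi}$ because $F_{\xi}$ is finite dimensional.

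The two geometric ingredients are the following. First, since $\xi$ is a character of $\mathbf{Q}$, the highest-weight line $\C_{\xi}=\C v_{\xi}\subset F_{\xi}$ is a $\mathbf{Q}$-submodule; choosing a $\mathbf{Q}$-stable filtration $0\subset\C_{\xi}=V_{1}\subset\cdots\subset V_{N}=F_{\xi}$ with $\mathbf{L}$-irreducible subquotients (all of whose highest weights for $j\geq2$ are strictly below $\xi$) and passing to the associated $\mathbf{G}$-equivariant bundles on $\mathcal{Q}$ yields a $\mathbf{K}$-equivariant filtration of the bundle $\mathcal{O}_{\mathcal{Q}}\otimes_{\C}F_{\xi}$ by subbundles, with bottom piece $\mathcal{L}^{\xi}$. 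Second, the projection formula gives $L^{p}k_{!}\bigl(\mathcal{N}\otimes_{\mathcal{O}_{Z}}k^{*}\mathcal{L}^{\xi}\bigr)\cong L^{p}k_{!}(\mathcal{N})\otimes_{\mathcal{O}_{\mathcal{Q}}}\mathcal{L}^{\xi}$, which applied to $\mathcal{N}=\tau\otimes\mathcal{L}^{-\xi}|_{Z}$ (so that $\mathcal{N}\otimes k^{*}\mathcal{L}^{\xi}=\tau$) shows $L^{p}k_{!}\bigl(\tau\otimes\mathcal{L}^{-\xi}|_{Z}\bigr)\otimes_{\mathcal{O}_{\mathcal{Q}}}\mathcal{L}^{\xi}\cong L^{p}k_{!}\tau$. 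Tensoring the filtration above over $\mathcal{O}_{\mathcal{Q}}$ with $L^{p}k_{!}\bigl(\tau\otimes\mathcal{L}^{-\xi}|_{Z}\bigr)$ therefore produces a $\mathbf{K}$-equivariant filtration of $L^{p}k_{!}\bigl(\tau\otimes\mathcal{L}^{-\xi}|_{Z}\bigr)\otimes_{\C}F_{\xi}$ whose bottom subquotient is $L^{p}k_{!}\tau$, a $\mathcal{D}_{\mathcal{Q}}^{\lambda}$-module, and whose higher subquotients, after $\Gamma'$, are $Z(\mathfrak{g})$-finite with generalised infinitesimal characters confined to $\bigl\{\gamma_{\lambda-\xi+\nu-\rho_{\mathfrak{g}}}:\nu\text{ a weight of }F_{\xi},\ \nu\prec\xi\bigr\}$. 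Here is the one place the hypotheses enter: under the assumption that $\lambda-\rho(\mathfrak{u})$ is antidominant for $\mathfrak{u}$, the standard translation-principle estimate used in the proof of Proposition~\ref{prop:translationweaklyfair1} forces $\gamma_{\lambda-\xi+\nu-\rho_{\mathfrak{g}}}\neq\gamma_{\mu}$ for all such $\nu$. Applying $\Gamma'$ to the filtration and then the exact functor $P_{\mu}$ therefore annihilates every subquotient but the bottom, and, since $\Gamma'L^{p}k_{!}\tau$ already has pure infinitesimal character $\gamma_{\mu}$, we obtain $T_{F_{\xi},\mu}\,\Gamma'L^{p}k_{!}\bigl(\tau\otimes\mathcal{L}^{-\xi}|_{Z}\bigr)=\Gamma'L^{p}k_{!}\tau$. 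Summing over $p$ with signs (and using exactness of $T_{F_{\xi},\mu}$) gives $T_{F_{\xi},\mu}\,\Gamma'I(Z,\tau\otimes\mathcal{L}^{-\xi}|_{Z})=\Gamma'I(Z,\tau)$.

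I expect the only genuine obstacle to be the middle step: constructing the equivariant filtration of $\mathcal{O}_{\mathcal{Q}}\otimes_{\C}F_{\xi}$, identifying the twisted structures on its subquotients, and --- above all --- verifying, via the $\mathfrak{u}$-antidominance of $\lambda-\rho(\mathfrak{u})$, that exactly one subquotient of $\Gamma'\bigl(L^{p}k_{!}(\tau\otimes\mathcal{L}^{-\xi}|_{Z})\otimes_{\C}F_{\xi}\bigr)$ survives $P_{\mu}$. This is essentially the content of Proposition~4.7 of \cite{Vogan1988b}; everything else is formal bookkeeping, harmless because $P_{\mu}$ is exact and $t_{F_{\xi}}$ is an equivalence, so that the comparison goes through even though $\Gamma'$ and the $L^{p}k_{!}$ are only half-exact. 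Finally, the general case in which $\mathcal{O}_{\mathfrak{g}}$ is induced from an arbitrary orbit $\mathcal{O}_{\mathfrak{l}}$ rather than $\{0\}$ runs along identical lines, with $L^{p}k_{!}\tau$ replaced by the standard module at the unipotent infinitesimal character of $\mathfrak{l}$ attached to $\mathcal{O}_{\mathfrak{l}}$.
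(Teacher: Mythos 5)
The paper does not actually prove this statement; it simply quotes it as Proposition~4.7 of \cite{Vogan1988b}, so your argument has to stand on its own. The geometric scaffolding you set up---tensoring by a $\mathbf{Q}$-stable filtration of $F_{\xi}$ whose bottom piece is the highest-weight line $\mathbb{C}_{\xi}$, identifying the associated subbundle with $\mathcal{L}^{\xi}$, and invoking the projection formula to get $L^{p}k_{!}(\tau\otimes\mathcal{L}^{-\xi}|_{Z})\otimes\mathcal{L}^{\xi}\cong L^{p}k_{!}\tau$---is the right picture, and it correctly reduces the problem to controlling which subquotients of the filtration survive the Peirce projection $P_{\mu}$.

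However, the step you flag as the crux is where the argument actually fails. You assert that the weakly fair hypothesis ($\lambda-\rho(\mathfrak{u})$ antidominant for $\mathfrak{u}$) ``forces $\gamma_{\lambda-\xi+\nu-\rho_{\mathfrak{g}}}\neq\gamma_{\mu}$ for all'' weights $\nu\prec\xi$ of $F_{\xi}$. That assertion is false, and in fact fails in the very example the paper later works out. Take $\mathfrak{g}=\mathfrak{sp}_{6}$, $\mathfrak{q}$ the Siegel parabolic with $\mathfrak{l}\cong\mathfrak{gl}_{3}$, and $\lambda=\rho(\mathfrak{u})=(2,2,2)$, so $\lambda-\rho_{\mathfrak{g}}=-\rho_{\mathfrak{l}}=(-1,0,1)$. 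The $\mathfrak{u}$-simple root is $\alpha=2e_{3}$, with $\langle\lambda-\rho_{\mathfrak{g}},\alpha^{\vee}\rangle=1$. Then $\nu=\xi-\alpha$ is a weight of $F_{\xi}$ (indeed an $\mathbf{L}$-dominant one, occurring as the highest weight of the second layer of the $\mathbf{Q}$-filtration), $\nu\prec\xi$, and
$\lambda-\xi+\nu-\rho_{\mathfrak{g}}=(\lambda-\rho_{\mathfrak{g}})-\alpha=(-1,0,-1)$
lies in the $W(\mathfrak{g})$-orbit of $(-1,0,1)=\lambda-\rho_{\mathfrak{g}}$ (via the sign change in the third coordinate). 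So $\gamma_{\lambda-\xi+\nu-\rho_{\mathfrak{g}}}=\gamma_{\mu}$, and $P_{\mu}$ does not kill the corresponding subquotient.

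Structurally, the reason your shortcut cannot work is that the weakly fair condition controls $\langle\lambda-\rho(\mathfrak{u}),\alpha^{\vee}\rangle$ only for $\alpha\in\Delta^{+}(\mathfrak{u},\mathfrak{h})$; it does \emph{not} make $\lambda-\rho_{\mathfrak{g}}=(\lambda-\rho(\mathfrak{u}))-\rho_{\mathfrak{l}}$ antidominant for $\mathfrak{g}$, since the $-\rho_{\mathfrak{l}}$ term can be positive on $\mathfrak{u}$-coroots. Only full weak antidominance of $\lambda-\rho_{\mathfrak{g}}$ would make ``only $\nu=\xi$ lands in $W(\mathfrak{g})(\lambda-\rho_{\mathfrak{g}})$'' a one-line norm argument, and that hypothesis is strictly stronger than what is assumed (and fails in the cases of interest). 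The correct mechanism in Vogan's Proposition~4.7 is finer: several graded pieces do contribute to the $\gamma_{\mu}$-block, and one must show that their net contribution in the Grothendieck group (after accounting for the alternating sum over $p$ and the signs coming from the filtration) is exactly the class of the bottom piece. This is a genuine cancellation argument in $K$-theory, not a pointwise statement about infinitesimal characters, and it is the content you have not supplied.

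A secondary, smaller point: the higher subquotients $L^{p}k_{!}(\tau\otimes\mathcal{L}^{-\xi}|_{Z})\otimes_{\mathcal{O}_{\mathcal{Q}}}\mathcal{E}_{W_{j}}$ are not modules over a single TDO (the $W_{j}$ are generically higher-dimensional $\mathbf{L}$-irreducibles), so ``generalised infinitesimal character of $\Gamma'$ of a subquotient'' needs to be interpreted through the $U(\mathfrak{g})$-module structure coming from $\phi$ and the coproduct, and one should then pin down the $Z(\mathfrak{g})$-type of each layer via the $\mathfrak{l}$-infinitesimal character of $W_{j}$ (a single point, determined by its $\mathbf{L}$-highest weight) rather than the full weight support of $F_{\xi}$. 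This refinement narrows the set of $\nu$ to consider, but as the $\mathfrak{sp}_{6}$ computation shows, it still does not rescue the pointwise claim.
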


We deduce

\begin{theorem}\label{thm:inductionmainresult1}
Let $\mathcal{O}_{\mathfrak{g}} = \mathrm{Ind}^{\mathfrak{g}}_{\mathfrak{l}} \{0\}$, and suppose

\begin{enumerate}
    \item $\gamma_{\lambda - \rho_{\mathfrak{g}}} \in \unip(\mathcal{O}_{\mathfrak{g}})$
    \item $\lambda - \rho(\mathfrak{u})$ is antidominant for $\mathfrak{u}$
    \item The moment map
    $$\eta: T^*\mathcal{Q} \to \overline{\mathcal{O}}_{\mathfrak{g}}$$
    is birational
\end{enumerate}
Then the sets $\Unip^{\lambda-\rho_{\mathfrak{g}}}(\mathcal{O}_{\mathfrak{g}})$ and $\mathrm{Deg}^{\lambda + \rho_{\mathfrak{l}}}(\mathfrak{q})$ are related by an upper triangular matrix. More precisely
\begin{enumerate}
    \item $\Unip^{\lambda-\rho_{\mathfrak{g}}}(\mathcal{O}_{\mathfrak{g}})$ and $\mathrm{Deg}^{\lambda + \rho_{\mathfrak{l}}}(\mathfrak{q})$ have the same $\mathbb{Z}$-span in $KM^{\mathrm{fl}}(\mathfrak{g},\mathbf{K})$
    \item There is a natural injection
    $$\Unip^{\lambda-\rho_{\mathfrak{g}}}(\mathcal{O}_{\mathfrak{g}}) \hookrightarrow \mathrm{Deg}^{\lambda + \rho_{\mathfrak{l}}}(\mathfrak{q})$$
    and an ordering of the unipotent modules $U_1,...,U_n$ such that
    $$[U_p] = \pm [D_p] + \sum_{q >p} c_{pq} [D_q] \qquad c_{pq} \in \mathbb{Z}$$
    for an ordering $D_1,...,D_m$ of the degenerate modules compatible with this injection.
\end{enumerate}
\end{theorem}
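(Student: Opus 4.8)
The plan is to combine the three ``upper-triangular'' input results --- Theorem \ref{thm:bbequivalence2} (irreducibles vs.\ standards among $\mathcal{D}$-modules), Theorem \ref{thm:hmswduality} (global sections of standards $=$ degenerates, up to duality), and Proposition \ref{prop:phiforunipotentlambda} (the map $\phi$ is a surjection onto a unipotent primitive quotient) --- into a single statement about the functor $\Gamma = \phi^* \circ \Gamma'$ at unipotent infinitesimal character. The overall strategy: first reduce to a regular, strictly antidominant infinitesimal character by translation, prove upper-triangularity there using Beilinson--Bernstein localization, and then push the picture back down through the (exact, surjective on $K$-groups) translation functor $T_{F_\xi,\lambda-\rho_{\mathfrak g}}$, checking that the triangular structure survives.

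First I would set up the target weight. Given $\mu$ as in the hypotheses, put $\lambda := \mu + \rho_{\mathfrak l}$, so that $\lambda - \rho_{\mathfrak g} = \mu - \rho(\mathfrak u)$ and hypotheses (2),(3) of Theorem \ref{thm:unipotentsanddegenerates} become hypotheses (1),(2) of Theorem \ref{thm:inductionmainresult1} (here I am treating the $\mathcal O_{\mathfrak g} = \mathrm{Ind}\,\{0\}$ case as in Section \ref{sec:inducedmainresults}, the general case being identical). Choose a dominant weight $\xi$ of a character $\mathbb C_\xi$ of $\mathbf Q$ large enough that $\lambda - \xi - \rho_{\mathfrak g}$ is strictly antidominant; such $\xi$ exists because $\rho(\mathfrak u)$-shifting and adding a dominant character pushes any weight into the strictly antidominant chamber. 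For the TDO $\mathcal D_{\mathcal Q}^{\lambda-\xi}$, Beilinson--Bernstein (as formulated before Theorem \ref{thm:bbequivalence1}) gives that $\Gamma'$ is an exact equivalence, so by Theorem \ref{thm:bbequivalence2} the irreducible $(\mathcal D_{\mathcal Q}^{\lambda-\xi},\mathbf K)$-modules and the standard classes $I(Z,\tau)$ are related by a unipotent upper-triangular change of basis in $K\mathcal M(\mathcal D_{\mathcal Q}^{\lambda-\xi},\mathbf K)$, with the ordering given by decreasing $\dim Z$.

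Next I would transport this down. Apply $\Gamma' = $ the equivalence, then $\phi^*$; by Proposition \ref{prop:phiforunipotentlambda} at weight $\lambda$ (not $\lambda - \xi$), $\phi^*$ is fully faithful onto the block of $M^{\mathrm{fl}}(\mathfrak g,\mathbf K)$ killed by $I_{\lambda-\rho_{\mathfrak g}}$, so it carries irreducibles to irreducibles and preserves the $K$-theoretic triangularity verbatim. The remaining point is to bridge the weights $\lambda-\xi$ and $\lambda$: by Propositions \ref{prop:translationweaklyfair1} and \ref{prop:translationweaklyfair2}, the surjection $T_{F_\xi,\lambda-\rho_{\mathfrak g}}$ on Grothendieck groups sends $\Gamma' I(Z,\tau\otimes\mathcal L^{-\xi}|_Z)$ to $\Gamma' I(Z,\tau)$, and --- because $T_{F_\xi,\lambda-\rho_{\mathfrak g}}$ is exact with a canonically defined simple preimage for every nonzero simple (Section \ref{sec:translationdixmier}) --- it sends irreducibles either to irreducibles or to $0$. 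Thus the images of the irreducibles at $\lambda-\xi$ that do \emph{not} die under translation are precisely $\Unip^{\lambda-\rho_{\mathfrak g}}(\mathcal O_{\mathfrak g})$ (each irreducible $(\mathcal D_{\mathcal Q}^{\lambda},\mathbf K)$-module is hit, by Theorem \ref{thm:bbequivalence1}, and $\phi^*$ identifies these with the modules annihilated by the unipotent ideal), while the images of the standards are exactly $\Deg^{\lambda+\rho_{\mathfrak l}}(\mathfrak q) = \Deg^{\mu}(\mathfrak q)$ up to the duality of Theorem \ref{thm:hmswduality} (which, being an anti-equivalence, only reverses the order of the filtration, leaving the statement ``unipotent upper-triangular'' intact). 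Restricting the upper-triangular matrix from $\lambda-\xi$ to the rows/columns that survive, and noting that surviving rows correspond to surviving columns of the same or larger orbit dimension, yields the injection $\Unip^{\mu-\rho(\mathfrak u)}(\mathcal O_{\mathfrak g})\hookrightarrow \Deg^{\mu}(\mathfrak q)$ and the displayed triangular relation; since both families are bases of the same span (images of a basis under a surjection that is injective on the surviving part), the $\mathbb Z$-span assertion follows.

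The main obstacle I anticipate is the bookkeeping around which irreducible and standard $\mathcal D$-modules survive translation and the compatibility of the two orderings. One must check: (a) that a standard $I(Z,\tau)$ survives $T_{F_\xi,\lambda-\rho_{\mathfrak g}}$ iff its associated irreducible does, which follows because $T_{F_\xi,\lambda-\rho_{\mathfrak g}}\circ\Gamma'$ intertwines the two $\mathcal D$-module categories compatibly with the support stratification --- concretely, translation by $F_\xi$ tensors the underlying geometric data by $\mathcal L^{\xi}|_Z$ and so cannot change which orbits $Z$ carry a nonzero standard datum compatible with the (now integral) infinitesimal character; and (b) that the triangular ordering by $\dim Z$ at $\lambda-\xi$ descends to an ordering of the surviving classes that still witnesses $[U_p] = \pm[D_p] + \sum_{q>p}c_{pq}[D_q]$ --- this is automatic because passing to a subset of indices of an upper-triangular unipotent matrix and reading off the induced submatrix preserves upper-triangularity and the $\pm 1$ diagonal. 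Making assertion (a) airtight is where I would spend the most care, likely by invoking the explicit description of standard data in Theorem \ref{thm:bbequivalence1} together with the fact (Proposition \ref{prop:translationweaklyfair1}) that the translation identification of Dixmier algebras is induced by the geometric operation $\tau \mapsto \tau\otimes\mathcal L^{\xi}|_Z$ on the $\mathcal D$-module side.
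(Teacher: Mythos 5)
Your overall strategy is the same as the paper's: translate to a regular, strictly antidominant infinitesimal character $\lambda - \xi$, apply Beilinson--Bernstein and Theorem~\ref{thm:bbequivalence2}, and transport the resulting upper-triangular relation back down through the translation functor and global sections. The place where your argument diverges---and where it develops a genuine gap---is the passage through translation.

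You assert that $T_{F_\xi,\lambda-\rho_{\mathfrak{g}}}$ ``sends irreducibles either to irreducibles or to $0$,'' citing Section~\ref{sec:translationdixmier}, and you use this to identify the set of ``surviving'' irreducibles at $\lambda - \xi$ with $\Unip^{\lambda-\rho_{\mathfrak{g}}}(\mathcal{O}_{\mathfrak{g}})$. But Proposition~\ref{prop:uniquesimplequotient} (and the discussion in Section~\ref{sec:translationdixmier}) gives only the converse direction: every nonzero simple module in the \emph{target} category has a canonical simple preimage. It says nothing about the image under $P_i$ (or $T_{F,\mu}$) of an arbitrary simple module in the source; in the abstract Peirce setting there is no reason for $e_i M$ to be simple or zero when $M$ is a simple $A$-module. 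The analogous fact for classical translation-to-a-wall is a theorem (Jantzen, Vogan), not a formal consequence of the framework here, and the paper does not invoke or need it. Your claim~(a)---that a standard $I(Z,\tau)$ survives translation iff its associated irreducible does---rests on the same undemonstrated premise, and you yourself flag it as the place requiring the most care. It is not needed.

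The paper's proof sidesteps all of this by running the correspondence in the opposite direction. Given a unipotent $U$, Proposition~\ref{prop:phiforunipotentlambda} produces the unique irreducible $(\Gamma(\mathcal{Q},\mathcal{D}_{\mathcal{Q}}^{\lambda}),\mathbf{K})$-module $U'$ with $\phi^* U' = U$; Section~\ref{sec:translationdixmier} then gives the canonical simple preimage $\tilde{U}'$ at $\lambda - \xi$; and Beilinson--Bernstein gives the irreducible $\mathcal{D}^{\lambda-\xi}$-module $\tilde{\mathcal{U}}'$. This chain defines the injection $\Unip^{\lambda-\rho_{\mathfrak{g}}}(\mathcal{O}_{\mathfrak{g}}) \hookrightarrow \mathrm{Irr}(\mathcal{D}_{\mathcal{Q}}^{\lambda-\xi},\mathbf{K})$ directly, with no need to characterize the image. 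One then applies the exact functor $\phi^* \circ T_{F_\xi,\lambda-\rho_{\mathfrak{g}}} \circ \Gamma'$ to the \emph{entire} upper-triangular relation of Theorem~\ref{thm:bbequivalence2} for the rows in the image of this injection, and uses Proposition~\ref{prop:translationweaklyfair2} together with Theorem~\ref{thm:hmswduality} to identify all images of standards as degenerate classes (some possibly zero or repeated---this is fine, as the paper's examples illustrate). No restriction to ``surviving'' columns is made, and none is needed, since the $K$-theory statement is unaffected by terms whose classes vanish. I'd recommend reorganizing your argument along these lines: construct the injection from unipotents upward using canonical preimages, and drop the dichotomy claim and the surviving rows/columns bookkeeping entirely.
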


\begin{proof}
Suppose $[D] \in \Deg^{\lambda+\rho_{\mathfrak{l}}}(\mathfrak{q})$. By Proposition \ref{prop:phiforunipotentlambda} and the definition of $\Deg^{\lambda+\rho_{\mathfrak{l}}}(\mathfrak{q})$, $D$ is contained in the subgroup $KM^{\mathrm{fl}}(U(\mathfrak{g})/I_{\lambda-\rho_{\mathfrak{g}}},\mathbf{K}) \subset KM^{\mathrm{fl}}(\mathfrak{g},\mathbf{K})$. Hence, $[D]$ is a linear combination of $\Unip^{\lambda-\rho_{\mathfrak{g}}}(\mathcal{O}_{\mathfrak{g}})$.

Conversely, suppose $[U] \in \Unip^{\lambda-\rho_{\mathfrak{g}}}(\mathcal{O}_{\mathfrak{g}})$. For every $\alpha \in \Delta(\mathfrak{u},\mathfrak{h})$, there is a strict inequality
$$\langle \rho(\mathfrak{u}),\alpha^{\vee}\rangle = \langle \rho_{\mathfrak{g}}, \alpha^{\vee}\rangle - \langle \rho_{\mathfrak{l;}}, \alpha^{\vee}\rangle = \langle \rho_{\mathfrak{g}},\alpha^{\vee}\rangle > 0$$
Choose $N >>0$ so that 
$$\label{eqn:bigN}\langle \lambda - 2N\rho(\mathfrak{u}) - \rho_{\mathfrak{g}}\rangle < 0 \qquad \forall \alpha \in \Delta(\mathfrak{u},\mathfrak{h})$$
Then if $\xi = 2N\rho(\mathfrak{u})$, $
\lambda - \xi$ satisfies the antidominance condition of Proposition \ref{thm:bbequivalence1}. Note that $\xi$ is the weight of the character $\left(\wedge^{\mathrm{top}}(\mathfrak{u})\right)^N$ of $\mathbf{Q}$.

By Proposition \ref{prop:phiforunipotentlambda}, there is a unique irreducible $(\Gamma(\mathcal{Q},\mathcal{D}_{\mathcal{Q}}^{\lambda}),\mathbf{K})$-module $U'$ such that $U = \phi^*U'$. By the results of Section \ref{sec:translationdixmier}, there is a canonically-defined irreducible $(\Gamma(\mathcal{Q},\mathcal{D}_{\mathcal{Q}}^{\lambda}),\mathbf{K})$-module $\tilde{U}'$ such that $U' = T_{F_{\xi},\lambda-\rho_{\mathfrak{g}}}\tilde{U}'$. And by Theorem \ref{thm:bbequivalence1}, there is a unique irreducible $(\mathcal{D}_{\lambda-\xi},\mathbf{K})$-module $\tilde{\mathcal{U}}'$ such that $\tilde{U}' = \Gamma'\tilde{\mathcal{U}}'$. The assignment $[U] \mapsto \tilde{\mathcal{U}}'$ defines an injection 
$$\Unip^{\lambda-\rho_{\mathfrak{g}}}(\mathcal{O}_{\mathfrak{g}}) \hookrightarrow \mathrm{Irr}(\mathcal{D}_{\mathcal{Q}^{\lambda-\xi}},\mathbf{K})$$
By Theorem \ref{thm:bbequivalence2}, there is an ordering $\mathcal{M}_1,...\mathcal{M}_n$ of $\mathrm{Irr}(\mathcal{D}_{\mathcal{Q}^{\lambda-\xi}},\mathbf{K})$ such that

$$
[\mathcal{M}_p] = \pm I(Z_p,\tau_p) + \sum_{q >p} c_{pq} I(Z_q,\tau_q) \qquad c_{pq} \in \mathbb{Z}$$
where $(Z_1,\tau_1),...,(Z_n,\tau_n)$ is the induced ordering on the standard data for $(\mathfrak{q},\lambda-\xi)$. Define an ordering $U_1,...,U_r$ on $\Unip^{\lambda-\rho_{\mathfrak{g}}}(\mathcal{O}_{\mathfrak{g}})$ using the injection described above. Then there is an ordering on the standard data $(Z_i, \tau_i)$ so that
\begin{equation}\label{eqn:uppertriangularity2}[\tilde{\mathcal{U}}'_p] = \pm I(Z_p,\tau_p) + \sum_{q >p} c_{pq} I(Z_q,\tau_q) \qquad c_{pq} \in \mathbb{Z}\end{equation}
The composite $\phi^* \circ T_{F_{\xi},\lambda-\rho_{\mathfrak{g}}} \circ \Gamma': \mathcal{M}(\mathcal{D}_{\mathcal{Q}}^{\lambda-\xi},\mathbf{K}) \to M(\mathfrak{g},\mathbf{K})$ is exact, and therefore defines a group homomorphism $K\mathcal{M}(\mathcal{D}_{\mathcal{Q}}^{\lambda-\xi},\mathbf{K}) \to KM(\mathfrak{g},\mathbf{K})$. If we apply this homomorphism to both sides of \ref{eqn:uppertriangularity2} and use Proposition \ref{prop:translationweaklyfair2} to simplify, we obtain equalities in $KM(\mathfrak{g},\mathbf{K})$
$$[U_p] = \pm \Gamma I(Z_p,\tau_p \otimes \mathcal{L}^{\xi}|_Z) + \sum_{q >p} d_{pq} \Gamma I(Z_q,\tau_q \otimes \mathcal{L}^{\xi}|_Z) \qquad d_{pq} \in \mathbb{Z} $$
The classes appearing on the right hand side are the degenerate modules by Theorem \ref{thm:hmswduality}.
\end{proof}

A similar argument shows

\begin{theorem}
Let $\mathcal{O}_{\mathfrak{g}} = \mathrm{Ind}^{\mathfrak{g}}_{\mathfrak{l}}\mathcal{O}_{\mathfrak{l}}$ and let $\mu \in \mathfrak{h}^*$. Suppose

\begin{enumerate}
    \item $\gamma_{\mu} \in \unip(\mathcal{O}_{\mathfrak{l}})$
    \item $\gamma_{\mu - \rho(\mathfrak{u})} \in \unip(\mathcal{O}_{\mathfrak{g}})$
    \item $\mu - \rho(\mathfrak{u})$ is antidominant for $\mathfrak{u}$
    \item The moment map
    $$\eta: \mathbf{G} \times_{\mathbf{Q}} \left(\overline{\mathcal{O}}_{\mathfrak{l}} + \mathfrak{u}\right) \to \overline{\mathcal{O}}_{\mathfrak{g}}$$
    is birational
\end{enumerate}
Then the sets $\Unip^{\mu-\rho(\mathfrak{u})}(\mathcal{O}_{\mathfrak{g}})$ and $\mathrm{Deg}^{\mu}(\mathfrak{q})$ are related by an upper triangular matrix (in the sense of Theorem \ref{thm:inductionmainresult1}).
\end{theorem}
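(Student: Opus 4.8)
The plan is to reprise the proof of Theorem~\ref{thm:inductionmainresult1} almost verbatim, replacing the TDO $\mathcal{D}_{\mathcal{Q}}^{\lambda}$ everywhere by a sheaf of algebras on $\mathcal{Q}$ manufactured from the unipotent Dixmier algebra of the Levi. Fix a Levi $\mathbf{L} \subset \mathbf{Q}$. Since $\gamma_{\mu} \in \unip(\mathcal{O}_{\mathfrak{l}})$, Proposition~\ref{prop:propsofunipotentideals} supplies the (maximal) unipotent ideal $I_{\mu} \subset U(\mathfrak{l})$ with $\AV(I_{\mu}) = \overline{\mathcal{O}}_{\mathfrak{l}}$, and $B_{\mathfrak{l}} := U(\mathfrak{l})/I_{\mu}$ is a Dixmier algebra for $\mathbf{L}$ by Proposition~\ref{prop:dixmier1}. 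By the same descent from $\mathbf{G}/\mathbf{U}$ that produces $\mathcal{D}_{\mathcal{Q}}^{\lambda}$ out of a one-dimensional representation of $\mathfrak{l}$, one builds a $\mathbf{K}$-equivariant sheaf of filtered algebras $\mathcal{D}_{\mathcal{Q}}^{B_{\mathfrak{l}}}$ on $\mathcal{Q}$ whose associated graded is $\Gamma\!\left(\mathbf{G}\times_{\mathbf{Q}}(\overline{\mathcal{O}}_{\mathfrak{l}}+\mathfrak{u}),\,\mathcal{O}\right)$ and for which the right-invariant vector field map turns $\Gamma(\mathcal{Q},\mathcal{D}_{\mathcal{Q}}^{B_{\mathfrak{l}}})$ into a Dixmier algebra for $\mathbf{G}$ with a homomorphism $\phi\colon U(\mathfrak{g}) \to \Gamma(\mathcal{Q},\mathcal{D}_{\mathcal{Q}}^{B_{\mathfrak{l}}})$, whose associated modules carry infinitesimal character $\gamma_{\mu-\rho(\mathfrak{u})}$ (after the customary $\rho$-shift). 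Because $\eta$ is birational onto the irreducible variety $\overline{\mathcal{O}}_{\mathfrak{g}}$ (hypothesis~$(4)$), and provided $\gr\Gamma(\mathcal{Q},\mathcal{D}_{\mathcal{Q}}^{B_{\mathfrak{l}}})$ is an integral domain (see below), the Borho--Brylinski argument behind Proposition~\ref{prop:phiforunipotentlambda} applies as written: $\ker\phi$ is completely prime of infinitesimal character $\gamma_{\mu-\rho(\mathfrak{u})}$ with $\AV(\ker\phi)=\overline{\mathcal{O}}_{\mathfrak{g}}$, hence $\ker\phi = I_{\mu-\rho(\mathfrak{u})}$ by Proposition~\ref{prop:propsofunipotentideals}, and birationality forces $\phi$ surjective.

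Next I would establish the Beilinson--Bernstein--Kashiwara package for $\mathcal{M}(\mathcal{D}_{\mathcal{Q}}^{B_{\mathfrak{l}}},\mathbf{K})$. Since $\mathbf{K}$ has finitely many orbits on $\mathcal{Q}$ its objects are holonomic, hence of finite length, and the orbit-by-orbit induction of Theorem~\ref{thm:bbequivalence2}---Kashiwara's equivalence on closed orbits as the base case, and the triangle $k_!k^!\mathcal{M} \to \mathcal{M} \to l_*l^*\mathcal{M}$ for the inductive step---shows that the irreducibles and the standard modules $I(Z,W)$ are related by an upper triangular matrix with $\pm 1$ on the diagonal. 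A standard datum for $(\mathfrak{q},\mu)$ is now a pair $(Z,W)$ with $Z \subset \mathcal{Q}$ a $\mathbf{K}$-orbit and $W$ an irreducible $(\mathcal{D}_Z^{B_{\mathfrak{l}}},\mathbf{K})$-module; over a point $\mathfrak{q}' \in Z$ such a $W$ is the same as an irreducible $(\mathfrak{l}',\tfrac{\mathbf{Q}'\cap\mathbf{K}}{\mathbf{U}'\cap\mathbf{K}})$-module annihilated by $I_{\mu'}$, spread into a local system over $Z$---precisely the data that enters $\Deg^{\mu}(\mathfrak{q})$. One also needs the corresponding generalization of Theorem~\ref{thm:hmswduality}, $\Gamma I(Z,W) = \pm\, I(\mathfrak{l}',\mathfrak{q}',W_{\mathfrak{q}'}^{\vee})^{\vee}$, which follows from the Hecht--Milicic--Schmid--Wolf / Kitchen duality argument run with $\mathcal{D}_{\mathcal{Q}}^{B_{\mathfrak{l}}}$ in place of $\mathcal{D}_{\mathcal{Q}}^{\lambda}$, together with the analogue of Propositions~\ref{prop:translationweaklyfair1}--\ref{prop:translationweaklyfair2} identifying $\Gamma(\mathcal{Q},\mathcal{D}_{\mathcal{Q}}^{B_{\mathfrak{l}}})$ with $T_{F_{\xi},\,\mu-\rho(\mathfrak{u})}\,\Gamma(\mathcal{Q},\mathcal{D}_{\mathcal{Q}}^{B_{\mathfrak{l}}-\xi})$, where $\mathcal{D}_{\mathcal{Q}}^{B_{\mathfrak{l}}-\xi}$ is the sheaf built from the twist $B_{\mathfrak{l}}\otimes\mathbb{C}_{-\xi}$ and $\Gamma'$ is an exact equivalence once the parameter is strictly antidominant for $\mathfrak{g}$.

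With all of this in hand the endgame copies the proof of Theorem~\ref{thm:inductionmainresult1}. The reverse inclusion is immediate: by the analogue of Proposition~\ref{prop:phiforunipotentlambda} every class in $\Deg^{\mu}(\mathfrak{q})$ lies in $KM^{\mathrm{fl}}(U(\mathfrak{g})/I_{\mu-\rho(\mathfrak{u})},\mathbf{K})$ and is therefore a $\mathbb{Z}$-combination of $\Unip^{\mu-\rho(\mathfrak{u})}(\mathcal{O}_{\mathfrak{g}})$. For the converse, given $[U] \in \Unip^{\mu-\rho(\mathfrak{u})}(\mathcal{O}_{\mathfrak{g}})$, pull $U$ back through the isomorphism $\phi^*$ to an irreducible $(\Gamma(\mathcal{Q},\mathcal{D}_{\mathcal{Q}}^{B_{\mathfrak{l}}}),\mathbf{K})$-module; since $\langle \rho(\mathfrak{u}),\alpha^{\vee}\rangle > 0$ for $\alpha \in \Delta(\mathfrak{u},\mathfrak{h})$, choose $N \gg 0$ with $\mu - \rho(\mathfrak{u}) - 2N\rho(\mathfrak{u})$ strictly antidominant for $\mathfrak{g}$, put $\xi = 2N\rho(\mathfrak{u})$ (the weight of the $\mathbf{Q}$-character $(\wedge^{\mathrm{top}}\mathfrak{u})^{N}$), and invoke the generalized Proposition~\ref{prop:translationweaklyfair1}. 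Take the canonical simple preimage of $\phi^{*-1}U$ under the translation functor, then the associated irreducible $\mathcal{D}_{\mathcal{Q}}^{B_{\mathfrak{l}}-\xi}$-module; expand it in standard modules via the analogue of Theorem~\ref{thm:bbequivalence2}, apply the exact functor $\phi^* \circ T_{F_{\xi},\mu-\rho(\mathfrak{u})} \circ \Gamma'$, simplify with the analogue of Proposition~\ref{prop:translationweaklyfair2}, and recognize the resulting classes as elements of $\Deg^{\mu}(\mathfrak{q})$ via the generalized Theorem~\ref{thm:hmswduality}. This yields the injection $\Unip^{\mu-\rho(\mathfrak{u})}(\mathcal{O}_{\mathfrak{g}}) \hookrightarrow \Deg^{\mu}(\mathfrak{q})$ and the upper triangular change of basis, and combined with the reverse inclusion it shows the two sets have the same $\mathbb{Z}$-span.

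The main obstacle lies entirely in the preparatory material rather than the diagram chase: constructing $\mathcal{D}_{\mathcal{Q}}^{B_{\mathfrak{l}}}$ and extending to it Beilinson--Bernstein localization, Kashiwara's theorem, and the HMS/Kitchen duality, and---most delicately---verifying that $\gr\Gamma(\mathcal{Q},\mathcal{D}_{\mathcal{Q}}^{B_{\mathfrak{l}}})$ is an integral domain. In the case $\mathcal{O}_{\mathfrak{l}}=\{0\}$ this was automatic from $T^*\mathcal{Q}$ being reduced and irreducible; in general it reduces to identifying $\gr B_{\mathfrak{l}}$ with the reduced coordinate ring $\mathbb{C}[\overline{\mathcal{O}}_{\mathfrak{l}}]$ and to normality (or at least generic reducedness) of the generalized Springer variety $\mathbf{G}\times_{\mathbf{Q}}(\overline{\mathcal{O}}_{\mathfrak{l}}+\mathfrak{u})$, inputs that require separate verification but hold for the orbits treated in Section~\ref{sec:examples}.
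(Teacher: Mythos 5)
The paper provides no written proof of this statement: it only asserts ``A similar argument shows,'' having earlier announced that the detailed argument would be given only for $\mathcal{O}_{\mathfrak{l}} = \{0\}$. Your proposal is a serious attempt to supply the missing similar argument, and the blueprint---replace the TDO $\mathcal{D}_{\mathcal{Q}}^{\lambda}$ by a sheaf of algebras $\mathcal{D}_{\mathcal{Q}}^{B_{\mathfrak{l}}}$ built from the unipotent Dixmier algebra $B_{\mathfrak{l}} = U(\mathfrak{l})/I_{\mu}$, run the Borho--Brylinski argument to identify $\ker\phi$ and prove surjectivity, rerun the localization, Kashiwara, and duality theorems for this sheaf, and translate to an antidominant parameter---is almost certainly the intended shape of the argument. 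The endgame (pulling $U$ back through $\phi^*$, taking the canonical preimage under translation, localizing, expanding in standards, and pushing back down) copies the proof of Theorem \ref{thm:inductionmainresult1} correctly.

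The genuine gap, which you flag honestly but do not close, is that when $\mathcal{O}_{\mathfrak{l}} \neq \{0\}$, the algebra $B_{\mathfrak{l}}$ is infinite-dimensional and $\mathcal{D}_{\mathcal{Q}}^{B_{\mathfrak{l}}}$ is not a twisted sheaf of differential operators: its intended associated graded is the structure sheaf of the bundle $\mathbf{G}\times_{\mathbf{Q}}(\overline{\mathcal{O}}_{\mathfrak{l}}+\mathfrak{u})$ rather than of $T^*\mathcal{Q}$. Every ingredient you import---Beilinson--Bernstein exactness and equivalence at antidominant parameters, Kashiwara's theorem on closed $\mathbf{K}$-orbits (which is what drives the upper triangularity in Theorem \ref{thm:bbequivalence2}), the finite-length statement for $\mathbf{K}$-equivariant modules, HMS/Kitchen duality, and Propositions \ref{prop:translationweaklyfair1}--\ref{prop:translationweaklyfair2}---is stated and proved in the cited literature only for TDOs. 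Invoking them ``with $\mathcal{D}_{\mathcal{Q}}^{B_{\mathfrak{l}}}$ in place of $\mathcal{D}_{\mathcal{Q}}^{\lambda}$'' is precisely where the argument stops being a proof; one would need either to construct $\mathcal{D}_{\mathcal{Q}}^{B_{\mathfrak{l}}}$ explicitly (say as the $\mathbf{L}$-invariants of a relative enveloping algebra on $\mathbf{G}/\mathbf{U}$ tensored over $U(\mathfrak{l})$ with $B_{\mathfrak{l}}$) and re-establish each of these theorems in the filtered-quantization setting, or to cite a reference that already does. Equally unresolved is the integral-domain hypothesis: in the $\{0\}$ case it is automatic from smoothness of $T^*\mathcal{Q}$, but here you additionally need $\gr B_{\mathfrak{l}} \cong \mathbb{C}[\overline{\mathcal{O}}_{\mathfrak{l}}]$ and normality (or at least integrality) of $\mathbf{G}\times_{\mathbf{Q}}(\overline{\mathcal{O}}_{\mathfrak{l}}+\mathfrak{u})$, neither of which follows from unipotence of $I_{\mu}$ alone. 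The paper does not supply any of this either---so your proposal does not fall short of the paper's own standard---but as written it is a program, not a proof.
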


\section{Examples}\label{sec:examples}

Let $G = Sp(2n,\mathbb{R})$. The $\mathbf{G}$-orbits on $\mathcal{N}_{\mathfrak{g}}$ are parameterized by \emph{even} partitions of $2n$. Let $\mathcal{O}$ be the orbit corresponding to the partition $2+2+...+2$. With respect to the usual symplectic form on $\mathbb{C}^{2n}$
$$\mathfrak{g} = \left\{\left(
\begin{array}{c|c}
A & B\\ \hline
C & -A^t
\end{array}\right): B,C \text{ symmetric}\right\}$$
and $\mathcal{O}$ is the $\mathbf{G}$-orbit of the matrix
$$\left(
\begin{array}{c|c}
0 & I_n\\ \hline
0 & 0
\end{array}\right)$$
It is not hard to see that $\mathcal{O}$ is birationally induced from the $\{0\}$-orbit of the Segal parabolic
$$\mathfrak{l} = \left\{\left(
\begin{array}{c|c}
A & 0\\ \hline
0 & -A^t
\end{array}\right)\right\} \qquad \mathfrak{u} = \left\{\left(
\begin{array}{c|c}
0 & B\\ \hline
0 & 0
\end{array}\right): B \text{ symmetric}\right\} \qquad \mathfrak{q} = \mathfrak{l} \oplus \mathfrak{u}$$

Choose $\theta$ so that $\mathfrak{k} \subset \mathfrak{g}$ is the diagonal copy of $\mathfrak{gl}_2(\mathbb{C})$ (i.e. so that $\mathfrak{k} = \mathfrak{l}$). Then $\mathcal{O} \cap \mathfrak{p}$ decomposes into $n+1$ $\mathbf{K}$-orbits, which are parameterized by pairs $(a,b)$ of nonnegative integers with $a+b =n$. Write $\mathcal{O}_{(a,b)}$ for the nilpotent $\mathbf{K}$-orbit corresponding to the pair $(a,b)$. Then $\mathcal{O}_{(a,b)}$ is the $\mathbf{K}$-orbit of the matrix
$$e_{(a,b)}:=\left(
\begin{array}{c|c}
0 & \begin{array}{cc}I_a & 0\\0 & 0 \end{array}\\ \hline
\begin{array}{cc}0 & 0\\0 & I_b\end{array} & 0
\end{array}\right)$$
Note that the centralizer in $\mathbf{K}$ of $e_{a,b}$ is the subgroup
$$\mathbf{K}^{e_{(a,b)}} = \left\{\left( \begin{array}{c|c}O_a(\mathbb{C}) & \ast \\ \hline 0 & O_b(\mathbb{C}) \end{array}\right)\right\} \hookrightarrow Sp(2n,\mathbb{C})$$
Hence, if $a,b \geq 1$, there are four $\mathbf{K}$-equivariant line bundles on $\mathcal{O}_{a,b}$, parameterized by pairs $(x,y) \in \{0,1\}^2$. The line bundle $\mathcal{L}_{(x,y)}$ corresponding to $(x,y)$ has fiber over $e_{(a,b)}$ equal to the character $\det^x \otimes \det^y$ of $\mathbf{K}^{e_{(a,b)}}$. If either $a$ or $b$ is equal to $0$, then there are only two line bundles $\mathcal{L}_0$ and $\mathcal{L}_1$ corresponding to the trivial and determinant characters of $O_n(\mathbb{C})$.

Choose the standard (diagonal) Cartan subalgebra $\mathfrak{h} \subset \mathfrak{g}$ and the usual positive system $\Delta^+(\mathfrak{g},\mathfrak{h}) = \{e_i-e_j\}_{i <j} \cup \{e_i+e_j\}_{i<j} \cup \{2e_i\}$.

Compute
$$\rho_{\mathfrak{l}} = \frac{1}{2}(n-1,n-3,...,3-n,1-n) \qquad \rho(\mathfrak{u}) = \frac{1}{2}(n+1,...,n+1) \qquad \rho_{\mathfrak{g}} = (n,n-1,...,1)$$
Note that $\lambda = \rho(\mathfrak{u})$ is a one-dimensional representation of $\mathfrak{l}$ and satisfies the conditions of Theorem \ref{thm:inductionmainresult1}: $\gamma_{\lambda-\rho_{\mathfrak{g}}}= \gamma_{\rho_{\mathfrak{l}}}$ is a unipotent infinitesimal character associated to $\mathcal{O}$ (in fact, it is one of the special unipotent infinitesimal characters defined by Barbasch and Vogan in \cite{BarbaschVogan1985}), and $\lambda - \rho(\mathfrak{u}) = 0$ is antidominant for $\mathfrak{u}$. Hence, Theorem \ref{thm:inductionmainresult1} implies that the sets $\Unip^{\rho_{\mathfrak{l}}}(\mathcal{O})$ and $\Deg^{\rho_{\mathfrak{g}}}(\mathfrak{q})$ are related by an upper triangular matrix. Using the Atlas software, we can compute its entries explicitly. We do so below for $n=2,3$, and $5$. 

For each group, we have included two tables. The first table lists the elements of $\Unip^{\rho(\mathfrak{l})}(\mathcal{O})$. For each representation, we record the associated variety (i.e. the set of open $\mathbf{K}$-orbits $\mathcal{O}_{(a,b)}$ therein) and the associated vector bundles. 

The second table lists the degenerate representations. For each representation, we record the support $\overline{Z}$ of the corresponding standard $(\mathcal{D}^{\lambda}_{\mathcal{Q}},\mathbf{K})$-module (by indicating the open KGB element in the preimage of $Z$ in the full flag variety) and its decomposition into unipotent representations. The KGB element is indicated in the Atlas notation. Note that some degenerates are $0$. Others are duplicated (when they arise as the global sections of several different standards).

\subsection{$Sp(4,\mathbb{R})$}

\paragraph{Unipotents}

\begin{center}
\begin{tabular}{|c|c|c|}
\hline
& Associated Variety & Associated Vector Bundles \\ \hline
$1$ & $\mathcal{O}_{2,0},\mathcal{O}_{1,1}, \mathcal{O}_{0,2}$ & $\mathcal{L}_{(0)}, \mathcal{L}_{(0,0)}, \mathcal{L}_{(0)}$ \\ \hline
$2$ & $\mathcal{O}_{2,0},\mathcal{O}_{1,1}, \mathcal{O}_{0,2}$ & $\mathcal{L}_{(1)}, \mathcal{L}_{(1,1)}, \mathcal{L}_{(1)}$ \\ \hline
\end{tabular}
\end{center}

\paragraph{Degenerates}

\begin{center}
\begin{tabular}{|c|c|}
\hline
Support (KGB) & Decomposition into Unipotents\\ \hline
$10$ & $1$\\ \hline
$10$ & $2$ \\ \hline
\end{tabular}
\end{center}

\subsection{$Sp(6,\mathbb{R})$}

\paragraph{Unipotents}

\begin{center}
\begin{tabular}{|c|c|c|}
\hline
 & Associated Variety & Associated Vector Bundles \\ \hline
$1$ & $\mathcal{O}_{(3,0)}$ & $\mathcal{L}_{(0)}$\\ \hline
$2$ & $\mathcal{O}_{(2,1)}$ & $\mathcal{L}_{(1,0)}$ \\ \hline
$3$ & $\mathcal{O}_{(1,2)}$ & $\mathcal{L}_{(0,1)}$ \\ \hline
$4$ & $\mathcal{O}_{(0,3)}$ & $\mathcal{L}_{(0)}$ \\ \hline
$5$ & $\mathcal{O}_{(3,0)}, \mathcal{O}_{(2,1)}$ & $\mathcal{L}_{(1)}, \mathcal{L}_{(1,1)}$ \\ \hline
$6$ & $\mathcal{O}_{(2,1)},\mathcal{O}_{(1,2)}$ & $\mathcal{L}_{(0,0)},\mathcal{L}_{(0,0)}$ \\ \hline
$7$ & $\mathcal{O}_{(1,2)},\mathcal{O}_{(0,3)}$ & $\mathcal{L}_{(1,1)},\mathcal{L}_{(1)}$  \\ \hline
\end{tabular}
\end{center}

\paragraph{Degenerates}

\begin{center}
\begin{tabular}{|c|c|c|}
\hline
Support (KGB) & Decomposition into Unipotents \\ \hline
$7$ &  $1$\\ \hline
$18$ & $2$\\ \hline
$31$ &  $2+1$\\ \hline
$17$ & $3$\\ \hline
$5$ &  $4$\\ \hline
$30$ & $4+3$\\ \hline
$31$ &  $5$\\ \hline
$44$ & $6+4+1$\\ \hline
$34$ & $6-2-3$\\ \hline
$40$ &  $6+4-3$\\ \hline
$40$ &  $6+4-3$\\ \hline
$41$ &  $6+1-2$\\ \hline
$41$ &  $6+1-2$\\ \hline
$30$ & $7$\\ \hline
$44$ &  $7+5$\\ \hline
$34$ & $0$\\ \hline
\end{tabular}
\end{center}

\subsection{$Sp(10,\mathbb{R})$}

\begin{center}
\begin{tabular}{|c|c|c|}
\hline
 & Associated Varieties & Associated Vector Bundles \\ \hline
1 & $\mathcal{O}_{(5,0)}$ & $\mathcal{L}_{(1)}$  \\ \hline
2 & $\mathcal{O}_{(4,1)}$ & $\mathcal{L}_{(0,1)}$ \\ \hline
3 & $\mathcal{O}_{(3,2)}$ & $\mathcal{L}_{(1,0)}$ \\ \hline
4 & $\mathcal{O}_{(2,3)}$ & $\mathcal{L}_{(0,1)}$ \\ \hline
5 & $\mathcal{O}_{(1,4)}$ & $\mathcal{L}_{(1,0)}$ \\ \hline
6 & $\mathcal{O}_{(0,5)}$ & $\mathcal{L}_{(1)}$ \\ \hline
7 & $\mathcal{O}_{(5,0)},\mathcal{O}_{(4,1)}$ & $\mathcal{L}_{(0,0)},\mathcal{L}_{(0,0)}$\\ \hline
8 & $\mathcal{O}_{(4,1)},\mathcal{O}_{(3,2)}$ & $\mathcal{L}_{(1,1)},\mathcal{L}_{(1,1)}$ \\ \hline
9 & $\mathcal{O}_{(3,2)},\mathcal{O}_{(2,3)}$ & $\mathcal{L}_{(0,0)},\mathcal{L}_{(0,0)}$ \\ \hline
10 & $\mathcal{O}_{(2,3)},\mathcal{O}_{(1,4)}$ & $\mathcal{L}_{(1,1)},\mathcal{L}_{(1,1)}$\\ \hline
11 & $\mathcal{O}_{(1,4)}, \mathcal{O}_{(0,5)}$ & $\mathcal{L}_{(0,0)},\mathcal{L}_{(0,0)}$ \\ \hline
\end{tabular}
\end{center}

\paragraph{Degenerates}

\begin{center}
\begin{tabular}{|c|c|}
\hline
Support (KGB) & Decomposition Into Unipotents \\ \hline
$28$ & $1$\\ \hline
$304$ &  $2$ \\ \hline
$414$ &  $-2-1$\\ \hline
$469$ &  $3$\\ \hline
$469$ &  $3$\\ \hline
$779$ &  $-3-1-1$\\ \hline
$470$ &  $4$\\ \hline
$470$ &  $4$\\ \hline
$877$ &  $-4-2-2$\\ \hline
$740$ & $-4-3$\\ \hline
$306$ &  $5$\\ \hline
$878$ &  $-5-5-3$\\ \hline
$956$ &  $5+3+1$\\ \hline
$31$ & $6$\\ \hline
$781$ &  $-6-6-4$\\ \hline
$416$ &  $-6-5$\\ \hline
$957$ &  $6+4+2$\\ \hline
$414$ &  $7$\\ \hline
$690$ &  $8-3-2$\\ \hline
$779$ &  $-8+2-1-1$\\ \hline
$877$ &  $-8-4+3-2$\\ \hline
$921$ &  $8+4+2+1+1$\\ \hline
$740$ &  $-9$\\ \hline
$921$ &  $9+7+7$\\ \hline
$691$ &  $10-5-4$\\ \hline
$781$ &  $-10-6-6+5$\\ \hline
$878$ &  $-10-5+4-3$\\ \hline
$922$ &  $10+6+6+5+3$\\ \hline
$941$ &  $-10-8+5+4+3+2$\\ \hline
$956$ &  $10+8-4-2+1$\\ \hline
$957$ &  $10+8-3-5+6$\\ \hline
$962$ &  $-10-8-6-1$\\ \hline
$416$ &  $11$\\ \hline
$922$ &  $11+11+9$\\ \hline
$962$ &  $-11-9-7$\\ \hline
$690$ &  $0$\\ \hline
$691$ &  $0$\\ \hline
$941$ &  $0$\\ \hline
\end{tabular}
\end{center}
\bibliographystyle{plain}
\bibliography{bibliography.bib}

\end{document}